\newtheorem{theorem}{Theorem}[section]
\theoremstyle{definition}
\theoremstyle{remark}
\newtheorem{remark}[theorem]{Remark}
\numberwithin{equation}{section}
\newcommand{\nn}{\nonumber}
\newtheorem{thm}{Theorem}[section]
\newtheorem{lem}{Lemma}[section]
\newcommand{\Bn}{{\boldsymbol{n}}}
\newcommand{\Bq}{{\boldsymbol{q}}}
\newcommand{\Br}{{\boldsymbol{r}}}
\newcommand{\Bv}{{\boldsymbol{v}}}
\newcommand{\Bw}{{\boldsymbol{w}}}
\newcommand{\BB}{{\boldsymbol{B}}}
\newcommand{\BL}{{\boldsymbol{L}}}
\newcommand{\BQ}{{\boldsymbol{Q}}}
\newcommand{\BV}{{\boldsymbol{V}}}
\newcommand{\Ct}{{\mathcal T}}
\date{}
\begin{document}

\title{A Hybridizable Discontinuous Galerkin Method for the Helmholtz Equation with High Wave Number}

\author{Huangxin Chen\thanks{School of Mathematical Sciences, Xiamen University, Xiamen, 361005, People's Republic of China. {\tt chx@xmu.edu.cn}.}~~, Peipei Lu$^\dag$, and Xuejun Xu\thanks{LSEC, Institute of
Computational Mathematics and Scientific/Engineering Computing,
Academy of Mathematics and System Sciences, Chinese Academy of
Sciences, P.O.Box 2719, Beijing, 100190, People's Republic of China. {\tt lupeipei@lsec.cc.ac.cn, xxj@lsec.cc.ac.cn}.}}

\maketitle
\begin{abstract}
This paper analyzes the error estimates of the hybridizable
discontinuous Galerkin (HDG) method for the Helmholtz equation with
high wave number in two and three dimensions. The approximation
piecewise polynomial spaces we deal with are of order $p\geq 1$.
Through choosing a specific parameter and using the duality
argument, it is proved that the HDG method is stable without any
mesh constraint for any wave number $\kappa$. By exploiting the
stability estimates, the dependence of convergence of the HDG method
on $\kappa,h$ and $p$ is obtained. Numerical experiments are given
to verify the theoretical results.

\end{abstract}

{\bf Key words}. Hybridizable discontinuous Galerkin method,
Helmholtz equation, high wave number, error estimates

\section{Introduction}
The numerical solutions of Helmholtz problems have been an area of
active research for almost half of a century. Because of the well
known pollution effect, the standard Galerkin finite element methods
can maintain a desired level of accuracy only if the mesh resolution
is also appropriately increased. In order to remedy this problem and
to obtain more stable and accurate approximation, numerous
nonstandard methods have been proposed recently (cf.
\cite{Ihlenburg}). One type of methods applies the stabilized
discrete variational form to approximate the Helmholtz equation,
which includes Galerkin-least-squares finite element methods
\cite{Chang,Harari-Hughes,MW}, quasi-stabilized finite element
methods \cite{Babu}, absolutely stable discontinuous Galerkin (DG)
methods \cite{Wu,Wuhp,feng} and continuous interior penalty finite
element methods (CIP-FEM) \cite{Wu2011}. Other approaches include
the partition of unity finite element methods
\cite{Melenk1,Melenk2,LB}, the ultra weak variational formulation
\cite{CD}, plane wave DG methods \cite{Amara,Hiptmair}, spectral
methods \cite{Shen}, generalized Galerkin/finite element methods
\cite{Babu-0,Melenk}, meshless methods \cite{Babu-1}, and the
geometrical optics approach \cite{Engquist}.

Discontinuous Galerkin methods have several attractive features
compared with conforming finite element methods. For example, the
polynomial degrees can be different from element to element, and
they work well on arbitrary meshes. For the Helmholtz equation, the
interior penalty discontinuous Galerkin methods (cf. \cite{Wu,Wuhp})
and the local discontinuous Galerkin methods \cite{feng} perform
much better than the standard finite element methods, and they are
well posed  without any mesh constraint. Despite all these
advantages, the dimension of the approximation DG space is much
larger than the dimension of the corresponding classical conforming
space.

The hybridizable discontinuous Galerkin methods were recently
introduced to try to address this issue. The HDG methods retain the
advantages of the standard DG methods and result in a significant
reduced degrees of freedom. New variables on the boundary of
elements are introduced such that the solution inside each element
can be computed in terms of them. In particular, element by element,
volume degrees of freedom can be parameterized by the surface
degrees, and the resulting algebraic system is only due to the
unknowns on the skeleton of the mesh. For a comprehensive
understanding of the HDG methods, we can refer to \cite{unified} for
a unified framework for second order elliptic problems and to
\cite{KSC2012} for the implementations.

In \cite{GM2011}, the authors give error estimates of the HDG method
for the interior Dirichlet problem for the Helmholtz equation, but
it is under the condition that $C_{\kappa}\kappa hM_{\tau}^{{\rm
min}}$ and $C_{\kappa}\kappa hM_{\tau}^{{\rm max}}$ are sufficiently
small, where $C_{\kappa}$ is a constant which is dependent on
$\kappa$ but is not characterized explicitly, and $M_{\tau}^{{\rm
min}}, M_{\tau}^{{\rm max}}$ depend on the parameters defined in the
numerical fluxes. Motivated by this work, the primary objective of this
paper is to analyze the explicit dependence of convergence of HDG method
for the Helmholtz equation on $\kappa, h$ and $p$. In this paper, we consider the Helmholtz equation
with Robin boundary condition which is the first order approximation
of the radiation condition:
\begin{align}
\label{Hem}
-\triangle u-\kappa^2u&=\tilde f  \qquad {\rm in }\ \Omega,\\
\label{Robin} \frac{\partial u}{\partial {\Bn}}+{\bf i}\kappa
u&=\tilde g \qquad {\rm on }\ \partial \Omega,
\end{align}
where $\Omega \subset \mathbb R^d$, $d=2,3$ is a
polygonal/polyhedral domain, $\kappa>0$ is known as the wave number, ${\bf i}=\sqrt{-1} $ denotes the
imaginary unit, and ${\Bn}$ denotes the unit outward normal to $\partial \Omega$.

The main difficulty of analyzing the Helmholtz equation lies in the
strong indefiniteness of the problem which makes it hard to
establish the stability for the numerical approximation. For the HDG
method, we use a duality argument to obtain the stability estimates
of the numerical solution. In the analysis, a crucial step lies in
the derivation of the dependence of convergence on $p$. We utilize
the explicit error estimates of $L^2$ projection operator (see Lemma
\ref{lemma34}) to overcome this problem. Then we obtain that the HDG
method for the Helmholtz problem (\ref{Hem})-(\ref{Robin}) attains a
unique solution for any $\kappa >0$, $h>0$. Furthermore, the
stability results not only guarantee the well-posedness of the HDG
method but also play a key role in the derivation of the error
estimates.

The duality argument can not be directly applied to establish the
error estimates. Thus, we first construct an auxiliary problem and
show its HDG error estimates by the duality technique. Then,
combining the stability estimates, the error estimates of HDG scheme
for the original Helmholtz problem (\ref{Hem})-(\ref{Robin}) are
deduced. Let $u_h$ and ${\Bq}_h$ be the HDG approximation to $u$ and
${\Bq}:={\bf i}\nabla u/\kappa$ respectively. We obtain the
following results:

(i) The following stability and error estimates hold without any constraint:
$$\|u_h\|_{0,\Omega}+\|{\Bq}_h\|_{0,\Omega}\lesssim \Big(1+\frac{\kappa^3h^2}{p^2}\Big)\|f\|_{0,\Omega}+\Big(1+\frac{\kappa^{\frac{3}{2}}h}{p}\Big)\|g\|_{0,\partial\Omega},
$$
$$
\|u-u_h\|_{0,\Omega}\lesssim \Big(\frac{\kappa h^2}{p^2}+\frac{\kappa^2h^2}{p^2}+\frac{\kappa^5h^4}{p^4}\Big)M(\tilde f,\tilde g),
$$
$$
\kappa\|{\Bq}-{\Bq}_h\|_{0,\Omega}\lesssim \Big(\frac{\kappa h}{p}+\frac{\kappa^3h^2}{p^2}+\frac{\kappa^6h^4}{p^4}\Big)M(\tilde f,\tilde g),
$$
where $f:=-{\bf i} \tilde f/\kappa$, $g:=-{\bf i} \tilde g/\kappa$
and $M(\tilde f,\tilde g):=\|\tilde f\|_{0,\Omega}+\|\tilde
g\|_{0,\partial\Omega}$. We use notations $A\lesssim B$ and
$A\gtrsim B$ for the inequalities $A\leq CB$ and $A\geq CB$, where
$C$ is a positive number independent of the mesh size, polynomial
degree and wave number $\kappa$, but the value of which can take on
different values in different occurrences.

(ii) Suppose $\frac{\kappa^3h^2}{p^2}\lesssim 1$, there hold the following improved results:
$$\|u_h\|_{0,\Omega}+\|{\Bq}_h\|_{0,\Omega}\lesssim \|f\|_{0,\Omega}+\|g\|_{0,\partial\Omega},
$$
$$
\|u-u_h\|_{0,\Omega}\lesssim \Big(\frac{\kappa h^2}{p^2}+\frac{\kappa^2h^2}{p^2}\Big)M(\tilde f,\tilde g),
$$
$$
\kappa\|{\Bq}-{\Bq}_h\|_{0,\Omega}\lesssim \Big(\frac{\kappa h}{p}+\frac{\kappa^3h^2}{p^2}\Big)M(\tilde f,\tilde g).
$$
Comparing to the estimates in $hp$-IPDG method for Helmholtz
problem, we find that the condition for the above improved results
weakens the mesh condition $\frac{\kappa^3h^2}{p}\lesssim 1$ which
is requested in \cite{Wuhp}. For the estimates under the mesh
condition $\frac{\kappa^3h^2}{p^2}\gtrsim 1$, the results in (i) can
not be directly applied, but we may still get the following improved
estimates.

(iii) Suppose $\frac{\kappa^3h^2}{p^2}\gtrsim 1$, there hold
$$\|u-u_h\|_{0,\Omega}\lesssim \frac{\kappa^2h^2}{p^2}M(\tilde f,\tilde g),$$
$$\kappa \|\Bq-{\Bq}_h\|_{0,\Omega}\lesssim \big(\frac{\kappa h}{p}+\frac{\kappa^3 h^2}{p^2}\big)M(\tilde f,\tilde g).$$

We remark that in this work the {\it local stabilization parameter}
to determine the numerical flux in the HDG scheme is always selected
as $\tau = \frac{p}{\kappa h}$ (see (\ref{numerical-flux})). Our
numerical results show that the predicted convergence rates are
observed.

The organization of the paper is as follows: We precisely define the
HDG method for the Helmholtz equation and give some notations in the next
section. Section 3 is dedicated to the characterization of the
surface degrees $\hat u_h $. In section 4, we derive the stability
estimates of the HDG method. The error estimates of the auxiliary
problem are carried out in section 5 while section 6 states the main
results of this paper, i.e., the error estimates of the HDG method
for the Helmholtz equation. In the final section, we give some
numerical results to confirm our theoretical analysis.

\section{The hybridizable discontinuous Galerkin method}
 The HDG scheme is based on a first order
formulation of the above Helmholtz equation
(\ref{Hem})-(\ref{Robin}) which can be rewritten in  mixed form as finding
$({\Bq},u)$ such that
\begin{align}
\label{HS}
{\bf i}\kappa{\Bq}+\nabla u&=0 \qquad {\rm in }\ \Omega,\\
{\bf i}\kappa u+{\rm div}\, {\Bq}&=f  \qquad {\rm in }\ \Omega,\\
\label{HE} -{\Bq}\cdot {\Bn}+u&=g \qquad {\rm on }\
\partial\Omega.
\end{align}
Existence and uniqueness of solutions to (\ref{HS})-(\ref{HE}) is
well known and it is proved in \cite{Feng} that they satisfy the
following  regularity result:
\begin{align}
\label{regularity}
\kappa^{-1}\|u\|_{2,\Omega}+\|u\|_{1,\Omega}+\|{\Bq}\|_{1,\Omega}\lesssim
M(\tilde f,\tilde g).
\end{align}

We consider a subdivision of $\Omega $ into a finite element mesh of
shape regular triangle $T$ in $\mathbb R^2$ (or tetrahedron $T$ in
$\mathbb R^3$) and denote the collection of triangles (tetrahedra)
by $\mathcal T_h$, the collection of edges (faces) by $\mathcal
E_h$, while the collection of interior edges (faces) by $\mathcal
E_h^0$ and the collection of element boundaries by $\partial
\mathcal T_h:=\{\partial T| T\in \mathcal T_h\}$. Throughout this paper we use the standard notations and definitions
for Sobolev spaces (see, e.g.,Adams\cite{Adams}).

On each element $T$ and each edge (face) $F$, we define the local
spaces of polynomials of degree $p\geq 1$:
$${\BV}(T):=(\mathcal P_p(T))^d,\qquad W(T):=\mathcal P_p(T),\qquad M(F):=\mathcal P_p(F),$$
where $\mathcal P_p(S)$ denotes the space of polynomials of total
degree at most $p$ on $S$. The corresponding global finite element
spaces are given by
\begin{align*}
{\BV}_h^p:&=\{{\Bv}\in {\BL}^2(\Omega)\  |\ {\Bv}|_T\in {\BV}(T)\ {\rm for\  all }\ T\in \mathcal T_h\},\\
W_h^p:&=\{w\in L^2(\Omega)\ |\ w|_T\in W(T)\ {\rm for\ all }\ T\in \mathcal T_h\},\\
M_h^p:&=\{ \mu\in L^2(\mathcal E_h)\ |\  \mu|_F\in M(F)\ {\rm for\ all }\ F\in \mathcal E_h\},
\end{align*}
where ${\BL}^2(\Omega):=( L^2(\Omega))^d$ and $L^2(\mathcal
E_h):=\Pi_{F\in \mathcal E_h}L^2(F)$. On these spaces we define the
bilinear forms
\begin{align*}
({\Bv},{\Bw})_{\mathcal T_h}:=\sum_{T\in \mathcal
T_h}({\Bv},{\Bw})_{T}, \ (v,w)_{\mathcal T_h}:=\sum_{T\in \mathcal
T_h}(v,w)_T, \ {\rm and} \ \langle v,w \rangle_{\partial \mathcal
T_h}:=\sum_{T\in \mathcal T_h}\langle v,w\rangle_{\partial T},
\end{align*}
with $({\Bv},{\Bw})_{T}:=\int_T {\Bv}\cdot {\Bw}dx$,
$(v,w)_T:=\int_T { v} { w}dx$ and $\langle v,w\rangle_{\partial
T}:=\int_{\partial T} { v} { w}ds$.

The hybridizable discontinuous Galerkin method yields finite element
approximations $({\Bq}_h, u_h,\hat u_h)\in {\BV}_h^p\times
W_h^p\times M_h^p$ which satisfy
\begin{align}
\label{P1}
({\bf i}\kappa {\Bq}_h,\overline{\Br})_{\mathcal T_h}-(u_h, \overline{{\rm div \, {\Br}}})_{\mathcal T_h}+\langle \hat u_h,\overline {{\Br}\cdot {\Bn}}\rangle_{\partial \mathcal T_h}&=0,\\
\label{P2}
({\bf i}\kappa u_h,\overline w)_{\mathcal T_h}-({\Bq}_h,\overline{\nabla w})_{\mathcal T_h}+\langle \hat {\Bq}_h\cdot {\Bn} ,\overline w\rangle_{\partial \mathcal T_h}&=(f,\overline w)_{\mathcal T_h} ,\\
\label{P3}
\langle -\hat{\Bq}_h\cdot {\Bn}+ \hat u_h,\overline \mu\rangle_{\partial \Omega}&=\langle g, \overline \mu\rangle_{\partial \Omega},\\
\label{P4} \langle \hat{\Bq}_h\cdot {\Bn},\overline
\mu\rangle_{\partial \mathcal T_h \backslash
 \partial \Omega}&=0,
\end{align}
for all ${\Br}\in {\BV}_h^p$, $w\in W_h^p$, and $\mu\in M_h^p$,
where the overbar denotes complex conjugation. The numerical flux
$\hat {\Bq}_h$ is given by
\begin{align}\label{numerical-flux}
\hat {\Bq}_h={\Bq}_h+\tau (u_h-\hat u_h){\Bn} \qquad {\rm on } \
\partial \mathcal T_h,
\end{align}
where the parameter $\tau$ is the so-called {\it local stabilization parameter} which has an important effect on both the stability of the solution and the accuracy of the HDG scheme.
We always choose $\tau = \frac{p}{\kappa h}$ in this paper. The error analysis is based on projection operators which are defined as follows
$${\bf \Pi_h}: {\BL}^2(\Omega)\to {\BV}_h^p \qquad \Pi_h: L^2(\Omega)\to  W_h^p$$
 for any $T\in \mathcal T_h$, they satisfy
\begin{align}
({\bf \Pi}_h{\Bq},{\Bv})_T&=({\Bq},{\Bv})_T \qquad {\rm for \ all\ }{\Bv}\in {\BV}(T),\\
\label{l2}
(\Pi_hu,w)_T&=(u,w)_T  \qquad {\rm for \ all\ }w\in W(T).
\end{align}
We conclude the introduction by setting some notations used
throughout this paper. Let the broken space $H^1(\Omega_h)$ be
defined by
$$H^1(\Omega_h):=\{v: v|_T\in H^1(T),\ \forall T\in \mathcal T_h\},$$
the seminorm of which is
$$|v|_{1,\Omega_h}^2:=\sum_{T\in \mathcal T_h}|v|_{1,T}^2.$$
The trace of functions in $H^1(\Omega_h)$ belong to
$T(\Gamma):=\Pi_{T\in \mathcal T_h}L^2(\partial T)$. For any
$\phi\in T(\Gamma)$, and ${\Bv}\in (T(\Gamma))^d$, if $e\in \mathcal
E_h^0,e=\partial T^+\cap \partial T^-$, we set
\begin{align*}
\{\hspace{-0.1cm}\{\phi\}\hspace{-0.1cm}\}:=\frac{1}{2}(\phi^++\phi^-),\quad
\llbracket\phi\rrbracket:=\phi^+\cdot {\Bn}^++\phi^-\cdot {\Bn}^-
\end{align*}
and
\begin{align*}
\{\hspace{-0.1cm}\{{\Bv}\}\hspace{-0.1cm}\}:=\frac{1}{2}({\Bv}^++{\Bv}^-),\quad
\llbracket{\Bv}\rrbracket:={\Bv}^+\cdot {\Bn}^++{\Bv}^-\cdot
{\Bn}^-.
\end{align*}
For $e\in\partial \mathcal T_h\cap \partial \Omega$, we define
$$\{\hspace{-0.1cm}\{\phi\}\hspace{-0.1cm}\}:=\phi,\ \llbracket\phi\rrbracket:=\phi\cdot{\Bn},\ \{\hspace{-0.1cm}\{{\Bv}\}\hspace{-0.1cm}\}:={\Bv},\ \llbracket{\Bv}\rrbracket:={\Bv}\cdot{\Bn}.$$

\section{The characterization of $\hat u_h $}

One of the advantages of hybridizable discontinuous Galerkin methods
is the elimination of both ${\Bq}_h$ and $u_h$ from the equation and
obtain a formulation in terms of $\hat u_h$ only. In this section,
we show that $\hat u_h$ can be characterized by a simple weak
formulation in which none of the other variables appear.

First we define the discrete solutions of the local problems, for
each function $\lambda\in M_h^p$, $({\bf \mathcal Q}_\lambda, \mathcal {U}_\lambda)\in
{\BV}(T)\times W(T)$ satisfies the following formulation
\begin{align}
\label{local1} ({\bf i}\kappa\mathcal
{Q}_\lambda,\overline{{\Br}})_T-(\mathcal {U}_\lambda,\overline{{\rm
div}\,{\Br}})_T&=-\langle \lambda,
\overline{{\Br}\cdot {\Bn}}\rangle_{\partial T} \quad {\rm for \ all \ } {\Br}\in {\BV}(T),\\
\label{local2} ({\bf i}\kappa \mathcal
{U}_\lambda,\overline{w})_T-(\mathcal {Q}_\lambda,\overline{\nabla
w})_T+\langle\hat {\mathcal {Q}}_\lambda\cdot
{\Bn},\overline{w}\rangle_{\partial T}&=0\quad {\rm for \ all \ }
{\Bw}\in W(T),
\end{align}
where $\hat {\mathcal {Q}}_\lambda\cdot {\Bn}=\mathcal {Q}_\lambda\cdot
{\Bn}+\tau(\mathcal {U}_\lambda-\lambda)$.  For $f\in L^2(\Omega)$, $({\bf
\mathcal Q}_f, \mathcal {U}_f)\in {\BV}(T)\times W(T)$ is defined as
follows
\begin{align}
\label{local3}
({\bf i}\kappa\mathcal {Q}_f,\overline{{\Br}})_T-(\mathcal {U}_f,\overline{{\rm div}\,{\Br}})_T&=0 \quad {\rm for \ all \ } {\Br}\in {\BV}(T),\\
\label{local4} ({\bf i}\kappa \mathcal
{U}_f,\overline{w})_T-(\mathcal {Q}_f,\overline{\nabla
w})_T+\langle\hat {\mathcal {Q}}_f\cdot
{\Bn},\overline{w}\rangle_{\partial T}&=(f,\overline{w})_T\quad {\rm
for \ all \ } {\Bw}\in W(T),
\end{align}
where $\hat {\mathcal {Q}}_f\cdot {\Bn}=\mathcal {Q}_f\cdot
{\Bn}+\tau\mathcal {U}_f$. Next we show that the local problem
(\ref{local1})-(\ref{local2}) is well posed. The uniqueness of
(\ref{local3})-(\ref{local4}) can be deduced similarly.
\begin{lem}
There exist a unique solution  $({\bf \mathcal Q}_\lambda, \mathcal
{U}_\lambda)\in {\BV}(T)\times W(T)$  to the local problem
(\ref{local1})-(\ref{local2}).
\end{lem}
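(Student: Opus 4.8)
Since the local problem (\ref{local1})--(\ref{local2}) is a square linear system posed on the finite-dimensional space ${\BV}(T)\times W(T)$, the plan is to invoke the rank--nullity principle: existence of a solution follows once uniqueness is established. Hence it suffices to show that the homogeneous problem, i.e. (\ref{local1})--(\ref{local2}) with $\lambda=0$ and vanishing right-hand sides, admits only the trivial solution, which I denote $(\mathcal{Q}_0,\mathcal{U}_0)\in{\BV}(T)\times W(T)$.

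For uniqueness, the first step is an energy identity. Choosing ${\Br}=\mathcal{Q}_0$ in (\ref{local1}) and $w=\mathcal{U}_0$ in (\ref{local2}), integrating by parts in the term $(\mathcal{Q}_0,\overline{\nabla\mathcal{U}_0})_T$ so that the resulting boundary contribution $\langle\mathcal{Q}_0\cdot{\Bn},\overline{\mathcal{U}_0}\rangle_{\partial T}$ cancels the one coming from $\hat{\mathcal Q}_0\cdot{\Bn}=\mathcal{Q}_0\cdot{\Bn}+\tau\mathcal{U}_0$, and using (\ref{local1}) with ${\Br}=\mathcal{Q}_0$ together with its complex conjugate to rewrite $(\,{\rm div}\,\mathcal{Q}_0,\overline{\mathcal{U}_0})_T=-{\bf i}\kappa\|\mathcal{Q}_0\|_{0,T}^2$, one is left with
\[
{\bf i}\kappa\big(\|\mathcal{U}_0\|_{0,T}^2-\|\mathcal{Q}_0\|_{0,T}^2\big)+\tau\|\mathcal{U}_0\|_{0,\partial T}^2=0 .
\]
Taking the real part and recalling $\tau=\frac{p}{\kappa h}>0$ gives $\mathcal{U}_0=0$ on $\partial T$.

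Next I would feed this back into the equations. Since the boundary term in (\ref{local1}) has now disappeared, an integration by parts recasts (\ref{local1}) as $({\bf i}\kappa\mathcal{Q}_0+\nabla\mathcal{U}_0,\overline{{\Br}})_T=0$ for every ${\Br}\in{\BV}(T)$; because $\mathcal{Q}_0$ and $\nabla\mathcal{U}_0$ both lie in ${\BV}(T)=(\mathcal P_p(T))^d$ and this space is stable under complex conjugation, testing with the conjugate of ${\bf i}\kappa\mathcal{Q}_0+\nabla\mathcal{U}_0$ yields ${\bf i}\kappa\mathcal{Q}_0+\nabla\mathcal{U}_0=\mathbf{0}$ in $T$. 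Likewise, inserting $\mathcal{U}_0|_{\partial T}=0$ into (\ref{local2}) and integrating by parts gives $({\bf i}\kappa\mathcal{U}_0+{\rm div}\,\mathcal{Q}_0,\overline{w})_T=0$ for all $w\in W(T)=\mathcal P_p(T)$, hence ${\bf i}\kappa\mathcal{U}_0+{\rm div}\,\mathcal{Q}_0=0$ in $T$. Eliminating $\mathcal{Q}_0$ produces $\Delta\mathcal{U}_0+\kappa^2\mathcal{U}_0=0$ in $T$.

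The main obstacle — and the only point that is not bookkeeping — is to close the argument from this last identity. The energy identity alone only delivers $\|\mathcal{U}_0\|_{0,T}=\|\mathcal{Q}_0\|_{0,T}$ and the vanishing trace, and on a general domain $\kappa^2$ might be a Dirichlet eigenvalue of $-\Delta$, so no smallness-free conclusion is available at the PDE level. Here, however, $\mathcal{U}_0$ is a \emph{polynomial} of degree at most $p$ and $\kappa>0$; comparing the highest-degree homogeneous components in $\Delta\mathcal{U}_0+\kappa^2\mathcal{U}_0=0$ — the Laplacian lowers the degree by two (or annihilates it) while $\kappa^2\mathcal{U}_0$ preserves it — forces the leading homogeneous part of $\mathcal{U}_0$ to vanish, and iterating gives $\mathcal{U}_0\equiv 0$, whence $\mathcal{Q}_0=\frac{{\bf i}}{\kappa}\nabla\mathcal{U}_0=\mathbf{0}$. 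This establishes uniqueness and hence existence, and it is exactly why the local solves remain well posed for every $\kappa$ and $h$, as needed in the sequel.
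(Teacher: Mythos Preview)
Your proof is correct and follows essentially the same route as the paper: reduce to the homogeneous system, derive the energy identity to force $\mathcal U_0|_{\partial T}=0$, feed this back to obtain ${\bf i}\kappa\mathcal Q_0=-\nabla\mathcal U_0$ and then $\Delta\mathcal U_0+\kappa^2\mathcal U_0=0$, and conclude $\mathcal U_0\equiv 0$. The only difference is that the paper is terse at the last step (``which implies that $\mathcal U_\lambda=0$''), whereas you spell out the degree argument showing that a nonzero polynomial cannot satisfy $\Delta\mathcal U_0=-\kappa^2\mathcal U_0$; this is a welcome clarification, not a different method.
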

\begin{proof}
Since it is a square system, to prove the existence and uniqueness of its solution, it is enough to show that if $\lambda=0$, we have that ${\bf \mathcal Q}_\lambda=0$, $\mathcal {U}_\lambda=0$.
Taking $r={\bf \mathcal Q}_\lambda$ in (\ref{local1}), $w=\mathcal {U}_\lambda$ in (\ref{local2}), we get
$${\bf i}\kappa(\mathcal {U}_\lambda,\overline{\mathcal {U}_\lambda})_T-{\bf i}\kappa({\bf \mathcal Q}_\lambda,\overline{{\bf \mathcal Q}_\lambda})_T+\tau\langle \mathcal {U}_\lambda,\overline{\mathcal {U}_\lambda}\rangle_{\partial T}=0,$$
which means
$\mathcal {U}_\lambda|_{\partial T}=0$. Back to (\ref{local1}), we derive that
$$-{\bf i}\kappa{\bf \mathcal Q}_\lambda=\nabla \mathcal {U}_\lambda.$$
 Inserting the above expression into (\ref{local2}), $$\triangle \mathcal {U}_\lambda=-\kappa^2\mathcal {U}_\lambda$$ is deduced, which implies that $\mathcal {U}_\lambda=0$, ${\bf \mathcal Q}_\lambda=0$.
\end{proof}

It is worth noting that the solution $({\Bq}_h, u_h)$ in
(\ref{P1})-(\ref{P4}) is exactly correspond to the following relationship
\begin{align*}
{\Bq}_h={\bf \mathcal Q}_{\hat u_h}+{\bf \mathcal Q}_f,\quad
u_h=\mathcal {U}_{\hat u_h}+\mathcal {U}_f.
\end{align*}
And $\hat u_h$ is the solution of the following formulation
$$a_h(\hat u_h,\mu)=b_h(\mu)\quad {\rm for \ all} \  \mu\in M_h^p,$$
where
\begin{align*}
a_h(\lambda,\mu):&=-\langle\llbracket\hat {\mathcal {Q}}_\lambda\rrbracket,\overline{\mu}\rangle_{\partial\mathcal T_h}+\langle \lambda,\overline{\mu}\rangle_{\partial \Omega},\\
b_h(\mu):&=\langle\llbracket\hat {\mathcal
{Q}}_f\rrbracket,\overline{\mu}\rangle_{\partial\mathcal
T_h}+\langle g,\overline{\mu}\rangle_{\partial \Omega}.
\end{align*}

\section{The stability of the hybridizable discontinuous Galerkin method}
The goal of this section is to derive stability estimates. We first cite the following lemma which provides
some approximation results that will play an important role later. A proof of the lemma can be found in \cite{Babuska, Schwab}.

\begin{lem}
Let $\hat T$ be a standard square or triangle. Then there exists an
operator  $\hat \pi_p:H^1(\hat T)\to \mathcal P_p(\hat T)$ such that
for any $\hat u\in H^1(\hat T)$
\begin{align}
\label{pip0-1} \|\hat u-\hat \pi_p\hat u\|_{0,\hat T}&\lesssim
p^{-1}|\hat u|_{1,\hat T}.
\end{align}
Moreover, if $\hat u\in H^2(\hat T) $,
\begin{align}
\label{pip0}
\|\hat u-\hat \pi_p\hat u\|_{0,\hat T}&\lesssim p^{-2}|\hat u|_{2,\hat T}, \\
\label{pip1}
|\hat u-\hat \pi_p\hat u|_{1,\hat T}&\lesssim p^{-1}|\hat u|_{2,\hat T}.
\end{align}
\end{lem}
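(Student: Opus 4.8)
The plan is to reduce everything to one-dimensional polynomial approximation estimates in terms of Legendre expansions, lift them to the reference square by a tensor-product construction, and then transfer to the reference triangle through the collapsed-coordinate (Duffy) transformation, following \cite{Babuska, Schwab}.

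\medskip
\textbf{Step 1 (one dimension).} On $I=(-1,1)$, for $w\in H^1(I)$ define $\pi_p^{(1)}w\in\mathcal P_p(I)$ by letting $(\pi_p^{(1)}w)'$ be the $L^2(I)$-orthogonal projection of $w'$ onto $\mathcal P_{p-1}(I)$ and fixing the constant by $\int_I(\pi_p^{(1)}w-w)\,dx=0$. Writing the Legendre expansion $w'=\sum_{k\ge0}b_kL_k$ and using $\|L_k\|_{0,I}^2=2/(2k+1)$ together with the identity expressing primitives of $L_k$ in terms of $L_{k\pm1}$, one obtains $\|w-\pi_p^{(1)}w\|_{0,I}\lesssim p^{-1}\|w'\|_{0,I}$. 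If moreover $w\in H^2(I)$, applying the same bound to $w'$ gives $|w-\pi_p^{(1)}w|_{1,I}\lesssim p^{-1}|w|_{2,I}$, and then the vanishing-mean normalization plus a Poincaré inequality yields $\|w-\pi_p^{(1)}w\|_{0,I}\lesssim p^{-2}|w|_{2,I}$. These are the classical one-dimensional estimates recorded in \cite{Schwab}.

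\medskip
\textbf{Step 2 (square).} For $\hat T=I\times I$, set $\hat\pi_p$ to be the tensor product of two one-dimensional operators of the type in Step~1, of degree $\approx p/2$ in each variable so that the total degree stays $\le p$ and hence $\hat\pi_p\hat u\in\mathcal P_p(\hat T)$ (for $p=1$ one simply takes the $L^2(\hat T)$-projection onto $\mathcal P_1(\hat T)$). Splitting $\hat u-\hat\pi_p\hat u=(\mathrm{id}-\pi_{p,x}^{(1)})\hat u+\pi_{p,x}^{(1)}(\mathrm{id}-\pi_{p,y}^{(1)})\hat u$, applying the one-dimensional estimates slicewise, and integrating in the remaining variable by Fubini gives \eqref{pip0-1} from the $H^1$-bound and \eqref{pip0}--\eqref{pip1} from the $H^2$-bounds (stability of $\pi_{p,x}^{(1)}$ in $L^2$ is used in the mixed term).

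\medskip
\textbf{Step 3 (triangle).} Here a naive pull-back through the Duffy map $D$ from the square onto $\hat T$ is not admissible because $D^{-1}$ degenerates at the collapsed edge; instead one works with the Jacobi-weighted orthogonal (Koornwinder--Dubiner) basis of $\hat T$, defines $\hat\pi_p\hat u$ as the truncation of the expansion of $\hat u$ at total degree $p$, and estimates the remaining tail using that the eigenvalues of the underlying singular Sturm--Liouville operator behave like $k(k+2)$. Controlling the weighted expansion coefficients by $|\hat u|_{1,\hat T}$ (resp.\ $|\hat u|_{2,\hat T}$) with the correct powers of $k$ then produces exactly the decay rates $p^{-1}$, $p^{-2}$, $p^{-1}$ of \eqref{pip0-1}, \eqref{pip0}, \eqref{pip1}. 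I expect this last step to be the main obstacle: the degeneracy of the collapsed coordinates and the passage from ordinary to Jacobi-weighted Sobolev norms are the technical heart of the argument, carried out in detail in \cite{Babuska, Schwab}, to which we refer. Since only the interior estimates \eqref{pip0-1}--\eqref{pip1} are needed in the sequel, the sharper boundary-trace versions are not required.
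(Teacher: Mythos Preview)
The paper does not prove this lemma at all: it explicitly states ``A proof of the lemma can be found in \cite{Babuska, Schwab}'' and moves on. So there is no ``paper's own proof'' to compare against; the authors simply import the result from the literature.

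Your sketch is a faithful outline of the classical argument in those references, so in that sense it is consistent with what the paper does (namely, defer to \cite{Babuska, Schwab}). A couple of minor comments on the sketch itself. In Step~2 you invoke $L^2$-stability of the one-dimensional operator $\pi_{p,x}^{(1)}$ to control the mixed term; the operator you defined (project the derivative, fix the mean) is manifestly $H^1$-stable but its uniform-in-$p$ $L^2$-stability is not automatic, and the usual workaround in \cite{Schwab} is either to bound the mixed term via an $H^1$ estimate followed by Poincar\'e, or to use the plain $L^2$ projection for the $L^2$-error bound and reserve the $H^1$-type operator for \eqref{pip1}. Also, taking degree $\lfloor p/2\rfloor$ in each variable to land in $\mathcal P_p$ rather than $Q_p$ is correct but slightly wasteful; the references typically prove the result for $Q_p$ on the square and only insist on total degree $p$ on the triangle. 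None of this affects the rates, and since the paper only needs the statement, your level of detail is already more than what the paper provides.
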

Using the standard scaling technique, we can get the following approximation results.
\begin{lem}\label{imp-lemma-pre}
For any $ T\in \mathcal T_h$, there exists an operator $\pi_h^p
:H^1( T)\to \mathcal P_p(T)$ such that for any $u\in H^1(T)$ there
holds
\begin{align}
\label{hpip0-1} \| u- \pi_h^p u\|_{0,T}&\lesssim \frac{h}{p}| u|_{1,
T},
\end{align}
Moreover, if $ u\in H^2(T) $,
\begin{align}
\label{hpip0}
\| u- \pi_h^p u\|_{0,T}&\lesssim \Big(\frac{h}{p}\Big)^2| u|_{2, T},\\
\label{hpip1}
| u- \pi_h^p u|_{1,T}&\lesssim \frac{h}{p}| u|_{2,T}.
\end{align}
\end{lem}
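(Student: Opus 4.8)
The plan is to reduce Lemma~\ref{imp-lemma-pre} to the reference-element estimates of the preceding lemma via a standard affine scaling argument. For a fixed $T\in\mathcal T_h$, let $F_T:\hat T\to T$ be the affine map from the reference square or triangle $\hat T$ onto $T$, write $F_T(\hat x)=B_T\hat x+b_T$, and for $u\in H^1(T)$ set $\hat u:=u\circ F_T\in H^1(\hat T)$. Define $\pi_h^p u:=(\hat\pi_p\hat u)\circ F_T^{-1}$, which lies in $\mathcal P_p(T)$ because composition with an affine map preserves total polynomial degree. The task is then purely to track how the $L^2$ and $H^1$ seminorms transform under $F_T$.

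The key scaling facts I would invoke are the shape-regularity bounds $\|B_T\|\lesssim h$, $\|B_T^{-1}\|\lesssim h^{-1}$, and $|\det B_T|\simeq h^d$. From the change of variables formula one gets $\|v\|_{0,T}^2=|\det B_T|\,\|\hat v\|_{0,\hat T}^2$ for any $v$ with $\hat v=v\circ F_T$, and for seminorms the chain rule gives $\nabla_{\hat x}\hat v=B_T^{\mathsf T}(\nabla v)\circ F_T$, hence $|\hat v|_{1,\hat T}\lesssim h\,|\det B_T|^{-1/2}|v|_{1,T}$ and, conversely, $|v|_{1,T}\lesssim h^{-1}|\det B_T|^{1/2}|\hat v|_{1,\hat T}$; the analogous statement for second derivatives yields $|\hat v|_{2,\hat T}\lesssim h^2|\det B_T|^{-1/2}|v|_{2,T}$. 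Applying these to $v=u-\pi_h^p u$, whose pullback is exactly $\hat u-\hat\pi_p\hat u$:
\begin{align*}
\|u-\pi_h^p u\|_{0,T}
  &=|\det B_T|^{1/2}\,\|\hat u-\hat\pi_p\hat u\|_{0,\hat T}
  \lesssim |\det B_T|^{1/2}\,p^{-1}|\hat u|_{1,\hat T}
  \lesssim |\det B_T|^{1/2}\,p^{-1}\,h\,|\det B_T|^{-1/2}|u|_{1,T},
\end{align*}
which collapses to \eqref{hpip0-1}. Feeding \eqref{pip0} instead of \eqref{pip0-1} and using the $H^2$ scaling bound gives \eqref{hpip0}, and for \eqref{hpip1} one starts from $|u-\pi_h^p u|_{1,T}\lesssim h^{-1}|\det B_T|^{1/2}|\hat u-\hat\pi_p\hat u|_{1,\hat T}$ and applies \eqref{pip1}, the $h$-powers combining to the single factor $h/p$.

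There is no real obstacle here; the argument is routine, and the only points requiring a modicum of care are: making sure $\hat\pi_p$ as supplied by the earlier lemma actually lands in $\mathcal P_p(\hat T)$ so that $\pi_h^p u$ lands in $\mathcal P_p(T)$ (true by definition of $\hat\pi_p$), and keeping the powers of $h$ and $|\det B_T|$ bookkept correctly so that the determinant factors cancel and exactly the claimed powers of $h/p$ survive. Shape regularity is what makes the constants in $\|B_T\|\lesssim h$ etc.\ uniform over the mesh, so the hidden constants in \eqref{hpip0-1}--\eqref{hpip1} depend only on the shape-regularity parameter and on the reference-element constants from the cited lemma, not on $h$, $p$, or $\kappa$ — which is precisely what the $\lesssim$ notation of the paper requires.
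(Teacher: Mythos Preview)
Your argument is correct and is exactly the ``standard scaling technique'' the paper invokes in lieu of a detailed proof; the paper gives no further details, and your write-up simply makes that one-line remark explicit.
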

We also need the following trace inequality and refer to \cite{Schwab} for the proof.
\begin{lem}\label{lemma33}
For any $ T\in \mathcal T_h$ and $v\in \mathcal P_p(T)$
\begin{align*}
\|v\|_{0,\partial T}\lesssim ph^{-\frac{1}{2}}\|v\|_{0,T}.
\end{align*}
\end{lem}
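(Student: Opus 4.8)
The plan is to reduce the estimate to the reference element $\hat T$ by an affine change of variables, and there to exploit that $v$ is a polynomial of bounded degree through a combination of a scale-invariant multiplicative trace inequality and a polynomial inverse inequality.

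First I would fix $T\in\mathcal T_h$ and let $F_T:\hat T\to T$ be the affine bijection from the reference simplex onto $T$, setting $\hat v:=v\circ F_T\in\mathcal P_p(\hat T)$. By shape regularity (and quasi-uniformity), $\operatorname{diam}(T)\simeq h$, the Jacobian satisfies $|\det DF_T|\simeq h^d$, and each face $F\subset\partial T$ is the image of a reference face $\hat F\subset\partial\hat T$ whose surface measure element scales like $h^{d-1}$. Hence
$$\|v\|_{0,T}^2\simeq h^{d}\,\|\hat v\|_{0,\hat T}^2,\qquad \|v\|_{0,\partial T}^2\simeq h^{d-1}\,\|\hat v\|_{0,\partial\hat T}^2,$$
so it suffices to establish the reference-element bound
$$\|\hat v\|_{0,\partial\hat T}\lesssim p\,\|\hat v\|_{0,\hat T}\qquad\text{for all }\hat v\in\mathcal P_p(\hat T).$$
Indeed, substituting the two scaling relations into this bound gives $\|v\|_{0,\partial T}^2\lesssim h^{d-1}p^2\|\hat v\|_{0,\hat T}^2\simeq h^{d-1}p^2 h^{-d}\|v\|_{0,T}^2=p^2 h^{-1}\|v\|_{0,T}^2$, which is the claim after taking square roots.

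To prove the reference-element bound I would combine two facts on the fixed domain $\hat T$. The first is the multiplicative trace inequality, valid for every $w\in H^1(\hat T)$ with a constant depending only on $\hat T$: $\|w\|_{0,\partial\hat T}^2\lesssim\|w\|_{0,\hat T}\,\|w\|_{1,\hat T}$. The second is the polynomial (Markov-type) inverse inequality $|\hat v|_{1,\hat T}\lesssim p^2\,\|\hat v\|_{0,\hat T}$, valid for $\hat v\in\mathcal P_p(\hat T)$. Applying the former to $w=\hat v$ and then the latter yields $\|\hat v\|_{0,\partial\hat T}^2\lesssim\|\hat v\|_{0,\hat T}\big(\|\hat v\|_{0,\hat T}+|\hat v|_{1,\hat T}\big)\lesssim p^2\|\hat v\|_{0,\hat T}^2$, as desired. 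Equivalently, one may expand $\hat v$ in an $L^2(\hat T)$-orthogonal basis of Jacobi polynomials and apply the Cauchy--Schwarz inequality on $\partial\hat T$; on the interval this produces the sharp constant $(p+1)/\sqrt2$, and the triangle/tetrahedron case follows via a Duffy-type collapsed coordinate transformation.

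The only genuinely delicate point is the $p$-dependence: a generic $H^1$ function admits no trace estimate controlled by its $L^2$ norm alone, so the polynomial structure is essential, and it enters exactly through the inverse inequality, which contributes the single power of $p$ in the final bound. The remaining ingredients — the affine change of variables, the bookkeeping of powers of $h$ under mesh regularity, and the multiplicative trace inequality — are completely standard and $p$-independent.
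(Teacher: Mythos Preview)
Your argument is correct. The paper does not actually prove this lemma; it simply cites Schwab's monograph for the proof, and the route you outline---scaling to the reference element, the multiplicative trace inequality, and the Markov-type inverse estimate $|\hat v|_{1,\hat T}\lesssim p^2\|\hat v\|_{0,\hat T}$---is precisely the standard argument found there.
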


Now we derive the following approximation properties of the
projection operator $\Pi_h$ which is defined in (\ref{l2}). For the
sake of simplicity, the proof is restricted to 2-d case.
\begin{lem}\label{lemma34}
For any $T\in \mathcal T_h$, $u\in H^1(T)$, the projection operator
$\Pi_h$ satisfies
\begin{align}
\label{l2p-1}
\|u-\Pi_hu\|_{0,T}&\lesssim \frac{h}{p}|u|_{1,T},\\
\label{l21} \|u-\Pi_hu\|_{0,\partial T}&\lesssim
\Big(\frac{h}{p}\Big)^{\frac{1}{2}}|u|_{1,T}.
\end{align}
Moreover, if $u\in H^2(T)$
\begin{align}
\label{l2p}
\|u-\Pi_hu\|_{0,T}&\lesssim \Big(\frac{h}{p}\Big)^2|u|_{2,T},\\
\label{l22} \|u-\Pi_hu\|_{0,\partial T}&\lesssim
\Big(\frac{h}{p}\Big)^{\frac{3}{2}}|u|_{2,T}.
\end{align}
\end{lem}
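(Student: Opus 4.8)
The four estimates in Lemma~\ref{lemma34} are statements about the $L^2$ projection $\Pi_h$ onto $\mathcal P_p(T)$, both in the interior $L^2(T)$ norm and on the boundary $\partial T$. The natural strategy is to deduce them from the already-cited approximation operator $\pi_h^p$ of Lemma~\ref{imp-lemma-pre} together with the polynomial trace inequality of Lemma~\ref{lemma33}, using the optimality of $\Pi_h$ in $L^2(T)$. First I would establish \eqref{l2p-1} and \eqref{l2p}: since $\Pi_h$ is the best $L^2(T)$ approximation in $\mathcal P_p(T)$ and $\pi_h^p u \in \mathcal P_p(T)$, we have $\|u-\Pi_h u\|_{0,T}\le \|u-\pi_h^p u\|_{0,T}$, and the right-hand side is bounded by $\tfrac{h}{p}|u|_{1,T}$ (resp. $(\tfrac{h}{p})^2|u|_{2,T}$) by \eqref{hpip0-1} (resp. \eqref{hpip0}). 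This disposes of both interior estimates immediately.

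For the boundary estimates \eqref{l21} and \eqref{l22}, the idea is to split
\begin{align*}
\|u-\Pi_h u\|_{0,\partial T}\le \|u-\pi_h^p u\|_{0,\partial T}+\|\pi_h^p u-\Pi_h u\|_{0,\partial T}.
\end{align*}
The second term is a polynomial, so Lemma~\ref{lemma33} gives $\|\pi_h^p u-\Pi_h u\|_{0,\partial T}\lesssim p h^{-1/2}\|\pi_h^p u-\Pi_h u\|_{0,T}$, and by the triangle inequality $\|\pi_h^p u-\Pi_h u\|_{0,T}\le \|u-\pi_h^p u\|_{0,T}+\|u-\Pi_h u\|_{0,T}\lesssim \tfrac{h}{p}|u|_{1,T}$ by what was just proved, so this term contributes $\lesssim p h^{-1/2}\cdot \tfrac{h}{p}|u|_{1,T}=h^{1/2}|u|_{1,T}$, which matches $(\tfrac{h}{p})^{1/2}|u|_{1,T}$ up to a factor $p^{1/2}$ — so a naive application loses a power of $p$. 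The fix is to avoid the trace inequality on the full difference and instead use a sharper $p$-explicit trace estimate, or to work on the reference element with a trace theorem $\|\hat v\|_{0,\partial\hat T}\lesssim \|\hat v\|_{0,\hat T}^{1/2}\|\hat v\|_{1,\hat T}^{1/2}$ applied to $\hat v=\hat u-\hat\pi_p\hat u$, combined with \eqref{pip0-1} and \eqref{pip1}; this yields $\|\hat u-\hat\pi_p\hat u\|_{0,\partial\hat T}\lesssim (p^{-1}|\hat u|_{1,\hat T})^{1/2}(p^{-1}|\hat u|_{2,\hat T}+\dots)^{1/2}$, and scaling back produces the correct $(\tfrac{h}{p})^{1/2}$ and $(\tfrac{h}{p})^{3/2}$ powers. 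In fact, since only $H^1$ regularity is assumed in \eqref{l21}, for that estimate I would use the multiplicative trace inequality $\|v\|_{0,\partial T}^2\lesssim h^{-1}\|v\|_{0,T}^2+\|v\|_{0,T}|v|_{1,T}$ valid for $v\in H^1(T)$, applied to $v=u-\Pi_h u$: the first term gives $h^{-1}(\tfrac{h}{p})^2|u|_{1,T}^2$, and for the second term one bounds $|u-\Pi_h u|_{1,T}$ — but $\Pi_h$ is not $H^1$-stable with a $p$-uniform constant, so instead I would again compare with $\pi_h^p u$ and use \eqref{hpip1}-type bounds, or more simply use $|u-\Pi_h u|_{1,T}\le |u-\pi_h^p u|_{1,T}+|\pi_h^p u - \Pi_h u|_{1,T}$ with an inverse inequality on the polynomial part.

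\textbf{Main obstacle.} The delicate point throughout is the $p$-dependence of the boundary estimates: the combination "polynomial trace inequality (Lemma~\ref{lemma33}, costing a factor $p h^{-1/2}$) $\circ$ inverse inequality" is exactly one power of $p$ too lossy, so the proof must route the trace step through the \emph{non-polynomial} difference $u-\pi_h^p u$ (or $u-\Pi_h u$) using a scaled multiplicative/interpolation trace inequality rather than through the polynomial part. Getting the bookkeeping of $h$ and $p$ powers right in the scaling from $\hat T$ to $T$ — recalling that $|\cdot|_{1,T}$ scales like $h^{d/2-1}|\cdot|_{1,\hat T}$ and the boundary $L^2$ norm like $h^{(d-1)/2}$ — is the only real work; once the right trace inequality is chosen, everything follows from Lemmas~\ref{imp-lemma-pre} and \ref{lemma33} and the $L^2$-optimality of $\Pi_h$. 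I would present the argument for $d=2$ as the authors indicate, with the $d=3$ case being identical modulo scaling exponents.
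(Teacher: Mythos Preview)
Your interior estimates (\ref{l2p-1}) and (\ref{l2p}) are correct and match the paper. The gap is in the boundary estimates: you correctly diagnose that the naive split loses $p^{1/2}$, but neither of your proposed fixes recovers it. If you apply the multiplicative trace inequality to $v=u-\Pi_h u$ and then estimate $|u-\Pi_h u|_{1,T}\le |u-\pi_h^p u|_{1,T}+|\pi_h^p u-\Pi_h u|_{1,T}$ with an inverse inequality on the polynomial part, that inverse inequality costs $p^2 h^{-1}$ against an $L^2$ bound of size $(h/p)^2|u|_{2,T}$ (resp.\ $(h/p)|u|_{1,T}$), giving $|\pi_h^p u-\Pi_h u|_{1,T}\lesssim |u|_{2,T}$ (resp.\ $p\,|u|_{1,T}$); plugging into $\|v\|_{0,T}|v|_{1,T}$ then yields a boundary bound of order $(h/p)|u|_{2,T}$ (resp.\ $h^{1/2}|u|_{1,T}$), which is still exactly $p^{1/2}$ short of (\ref{l22}) (resp.\ (\ref{l21})). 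Applying the multiplicative trace instead to $u-\pi_h^p u$ is sharp for that function, but to get back to $\Pi_h$ you must add $\|\pi_h^p u-\Pi_h u\|_{0,\partial T}$, and the polynomial trace inequality on that difference loses the same $p^{1/2}$.

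The ingredient you are missing is to use the $L^2$-orthogonality of $u-\Pi_h u$ to $\mathcal P_p(T)$ \emph{inside} the trace argument rather than after it. On the reference triangle the paper derives the weighted identity
\[
\int_{\hat e}|\hat v(\hat x_1,0)|^2(1-\hat x_1)\,d\hat x_1=\|\hat v\|_{0,\hat T}^2-2\,\mathrm{Re}\int_{\hat T}(1-\hat x_1-\hat x_2)\,\hat v\,\overline{\partial_{\hat x_2}\hat v}\,d\hat x
\]
and takes $\hat v=\hat u-\hat\Pi\hat u$. Because the derivative drops one degree and the linear weight $(1-\hat x_1-\hat x_2)$ restores it, one has $(1-\hat x_1-\hat x_2)\,\partial_{\hat x_2}\hat\Pi\hat u\in\mathcal P_p(\hat T)$, hence orthogonal to $\hat v$; this lets one replace $\partial_{\hat x_2}\hat v$ in the cross term by $\partial_{\hat x_2}(\hat u-\hat\pi_p\hat u)$ (or simply $\partial_{\hat x_2}\hat u$ in the $H^1$ case) at no cost, after which (\ref{pip1}) applies with the correct $p$-power. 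A second reference triangle supplies the complementary weight $\hat x_1$, and summing removes the weight. This orthogonality trick is precisely what an inverse-inequality route cannot exploit.
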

\begin{proof}
An important property of $L^2(\Omega)$ projection operator $\Pi_h$ is that
\begin{align*}
\|u-\Pi_hu\|_{0,T}\leq {\rm inf}_{v\in W(T)}\|u-v\|_{0,T},
\end{align*}
hence (\ref{hpip0-1}) and (\ref{hpip0}) imply (\ref{l2p-1}) and
(\ref{l2p}). Let $\hat T_1$ and $\hat T_2$ be the standard triangles
with linear mappings $x=F_1(\hat x)$ and $x=F_2(\hat x)$
respectively, see Figure \ref{figref} for illustration. For any
$v\in W(T)$, define $\hat v(\hat x):=v\circ F_1(\hat x)$ and $\tilde
v(\hat x):=v\circ F_2(\hat x)$.

\begin{figure}[htbp]
\centering
    \includegraphics[width=3.0in]{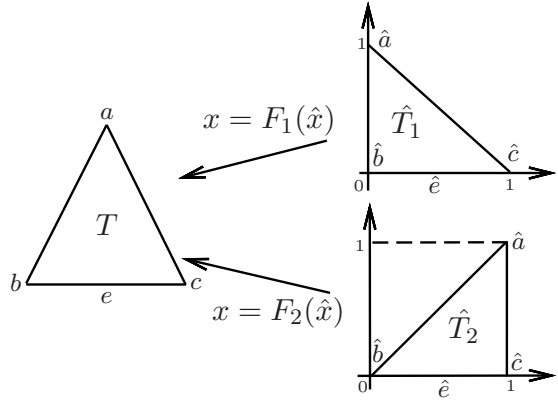}
    \caption{\small The triangle $T$ and its reference triangles $\hat{T}_1$ and $\hat{T}_2$.}\label{figref}
\end{figure}
For any $\hat v(\hat x)\in H^1(\hat T_1)$, we have
\begin{align*}
|\hat v(\hat {x})|^2&=\hat v(\hat {x})\cdot \overline{\hat v(\hat
{x})}= \hat v(\hat x_1,0)\cdot\overline{\hat v(\hat
x_1,0)}+\int_0^{\hat x_2}\frac{\partial (\hat v(\hat
x_1,\eta)\cdot\overline{\hat v(\hat x_1,\eta)})}{\partial \eta}\\&=
|\hat v(\hat x_1,0)|^2+2{\rm Re}\int_0^{\hat x_2}\hat v(\hat
x_1,\eta)\overline{\frac{\partial \hat v(\hat x_1,\eta)}{\partial
\eta}}d\eta.
\end{align*}
Hence
$$
|\hat v(\hat x_1,0)|^2=|\hat v(\hat x_1,\hat x_2)|^2-2{\rm Re}\int_0^{\hat x_2}\hat v(\hat x_1,\eta)\overline{\frac{\partial \hat v(\hat x_1,\eta)}{\partial \eta}}d\eta.
$$
By using integration on $\hat T_1$ we can deduce
\begin{align*}
&\int_0^1|\hat v(\hat x_1,0)|^2(1-\hat x_1)d\hat x_1=\int_0^1\int_0^{1-\hat x_1}|\hat v(\hat x_1,0)|^2d\hat x_2d\hat x_1\\
&=\int_{\hat T_1}|\hat v(\hat x_1,\hat x_2)|^2d\hat x_2d\hat x_1-2{\rm Re}\int_0^1\int_0^{1-\hat x_1}\int_0^{\hat x_2}\hat v(\hat x_1,\eta)\overline{\frac{\partial \hat v(\hat x_1,\eta)}{\partial \eta}}d\eta d\hat x_2d\hat x_1\\
&=\int_{\hat T_1}|\hat v(\hat x_1,\hat x_2)|^2d\hat x_2d\hat x_1-2{\rm Re}\int_0^1\int_0^{1-\hat x_1}\int_{\eta}^{1-\hat x_1}\hat v(\hat x_1,\eta)\overline{\frac{\partial \hat v(\hat x_1,\eta)}{\partial \eta}}d\hat x_2 d\eta d\hat x_1\\
&=\int_{\hat T_1}|\hat v(\hat x_1,\hat x_2)|^2d\hat x_2d\hat x_1-2{\rm Re}\int_0^1\int_0^{1-\hat x_1}(1-\hat x_1-\eta)\hat v(\hat x_1,\eta)\overline{\frac{\partial \hat v(\hat x_1,\eta)}{\partial \eta}} d\eta d\hat x_1\\
&=\int_{\hat T_1}|\hat v(\hat x_1,\hat x_2)|^2d\hat x_2d\hat x_1-2{\rm Re}\int_{\hat T_1}(1-\hat x_1-\hat x_2)\hat v(\hat x_1,\hat x_2)\overline{\frac{\partial \hat v(\hat x_1,\hat x_2)}{\partial \hat x_2}} d\hat x_2 d\hat x_1.
\end{align*}
Take $\hat v=\hat u-\hat \Pi\hat u$, where $\hat \Pi\hat u\in \mathcal P_p(\hat T_1)$ satisfies $(\hat \Pi\hat u, \hat w)_{\hat T_1}=(\hat u, \hat w)_{\hat T_1},\ \forall \hat w\in \mathcal P_p(\hat T_1)$, and note that
$$(u-\Pi_h u,v)_T=\frac{|T|}{|\hat T_1|}(\hat u-\widehat{\Pi_h u},\hat v)_{\hat T_1}=0,$$
which means $\hat \Pi\hat u=\widehat{\Pi_hu}$. By Lemma
\ref{imp-lemma-pre} and scaling technique we get
\begin{equation}\begin{split}
\label{e21}
&\int_{\hat e}\big((\hat u-\hat \Pi\hat u)(\hat x_1,0)\big)^2(1-\hat x_1)d\hat x_1\\
&=\int_{\hat T_1}(\hat u-\hat \Pi\hat u)^2d\hat x_2d\hat x_1-2{\rm Re}\int_{\hat T_1}(\hat u-\hat \Pi\hat u)\overline{\frac{\partial (\hat u-\hat \Pi\hat u)}{\partial \hat x_2}}(1-\hat x_1-\hat x_2) d\hat x_2 d\hat x_1\\
&=\int_{\hat T_1}(\hat u-\hat \Pi\hat u)^2d\hat x_2d\hat x_1-2{\rm Re}\int_{\hat T_1}(\hat u-\hat \Pi\hat u)\overline{\frac{\partial (\hat u-\hat \pi_p\hat u)}{\partial \hat x_2}}(1-\hat x_1-\hat x_2) d\hat x_2 d\hat x_1\\
&\leq \|\hat u-\hat \Pi\hat u\|_{0,\hat T_1}^2+2\|\hat u-\hat
\Pi\hat u\|_{0,\hat T_1} |\hat u-\hat \pi_p\hat u|_{1,\hat T_1}\\
&\lesssim
h^{-2}\|u-\Pi_hu\|_{0,T}^2+h^{-1}\|u-\Pi_hu\|_{0,T}|u-\pi_h^pu|_{1,T}
\lesssim \frac{h^2}{p^3}|u|_{2,T}^2,
\end{split}
\end{equation}
and
\begin{equation}\begin{split}
\label{e11}
&\int_{\hat e}\big((\hat u-\hat \Pi\hat u)(\hat x_1,0)\big)^2(1-\hat x_1)d\hat x_1\\&=\int_{\hat T_1}(\hat u-\hat \Pi\hat u)^2d\hat x_2d\hat x_1-2{\rm Re}\int_{\hat T_1}(\hat u-\hat \Pi\hat u)\overline{\frac{\partial \hat u}{\partial \hat x_2}}(1-\hat x_1-\hat x_2) d\hat x_2 d\hat x_1\\
&\lesssim \|\hat u-\hat \Pi\hat u\|_{0,\hat T_1}^2+\|\hat u-\hat
\Pi\hat u\|_{0,\hat T_1}
|\hat u|_{1,\hat T_1}\\
&\lesssim
h^{-2}\|u-\Pi_hu\|_{0,T}^2+h^{-1}\|u-\Pi_hu\|_{0,T}|u|_{1,T}
\lesssim p^{-1}|u|_{1,T}^2.
\end{split}
\end{equation}
Now we map $T$ to $\hat T_2$ and similarly we can derive
\begin{align}
\label{e22} \int_{\hat e}\big((\tilde u-\tilde \Pi\tilde u)(\hat
x_1,0)\big)^2\hat x_1d\hat x_1\lesssim \frac{h^2}{p^3}|u|_{2,T}^2,
\end{align}
and
\begin{align}
\label{e12} \int_{\hat e}\big((\tilde u-\tilde \Pi\tilde u)(\hat
x_1,0)\big)^2\hat x_1d\hat x_1\lesssim p^{-1}|u|_{1,T}^2,
\end{align}
where $\tilde \Pi\tilde u\in \mathcal P_p(\hat T_2)$ satisfies
$(\tilde \Pi\tilde  u, \tilde w)_{\hat T_2}=(\tilde u, \tilde
w)_{\hat T_2},\ \forall \tilde w\in \mathcal P_p(\hat T_2)$ and
$\tilde \Pi\tilde u=\widetilde{\Pi_hu}$. Since $\hat v(\hat
x)=\tilde v(\hat x)$ on $\hat e$, summing up (\ref{e21}),
(\ref{e22}), and (\ref{e11}), (\ref{e12}) respectively and noting
that $e$ is not particularly chosen, the lemma is proved.
\end{proof}

\begin{remark}
In this paper, we only deal with meshes consisting of triangles or
tetrahedra, but we should note that Lemma \ref{lemma34} can be
extended to the meshes constituted with rectangles or hexahedra. The
proof is similar with Lemma \ref{lemma34} and can also be found in
\cite{CIP-D}.
\end{remark}

\begin{lem}\label{lemma35}
Let $({\Bq}_h, u_h,\hat u_h)\in {\BV}_h^p\times W_h^p\times M_h^p$
be the solutions of (\ref{P1})-(\ref{P4}). There hold
\begin{align}
\label{trace-H}
\tau\|u_h-\hat u_h\|_{0,\partial \mathcal T_h}^2&\leq \|f\|_{0,\Omega}\|u_h\|_{0,\Omega}+\|g\|_{0,\partial\Omega}^2,\\
\label{q_h-H} \kappa \|{\Bq}_h\|^2_{0,\Omega}&\lesssim
\kappa\|u_h\|^2_{0,\Omega}+\|f\|^2_{0,\Omega}+\|g\|^2_{0,\partial\Omega}.
\end{align}
\end{lem}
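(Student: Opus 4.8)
\emph{Proof strategy.} The plan is to produce one \emph{energy identity} by testing the HDG system with the discrete solution itself, and then to obtain both estimates by separating its real and imaginary parts.

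First I would take $\Br=\Bq_h$ in (\ref{P1}) and take the complex conjugate of the resulting scalar equation; an elementwise integration by parts rewrites the term $(\overline{u_h},{\rm div}\,\Bq_h)_{\mathcal T_h}$ as $-(\Bq_h,\overline{\nabla u_h})_{\mathcal T_h}+\langle\overline{u_h},\Bq_h\cdot\Bn\rangle_{\partial\mathcal T_h}$. Adding the result to (\ref{P2}) with $w=u_h$, the volume terms $(\Bq_h,\overline{\nabla u_h})_{\mathcal T_h}$ cancel, leaving ${\bf i}\kappa\big(\|u_h\|_{0,\Omega}^2-\|\Bq_h\|_{0,\Omega}^2\big)$, the skeleton terms $\langle\Bq_h\cdot\Bn,\overline{\hat u_h}-\overline{u_h}\rangle_{\partial\mathcal T_h}+\langle\hat\Bq_h\cdot\Bn,\overline{u_h}\rangle_{\partial\mathcal T_h}$, and $(f,\overline{u_h})_{\mathcal T_h}$ on the right. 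Next I would invoke the numerical flux relation (\ref{numerical-flux}) twice, once to substitute $\hat\Bq_h\cdot\Bn=\Bq_h\cdot\Bn+\tau(u_h-\hat u_h)$ and once in the reverse direction to substitute $\Bq_h\cdot\Bn=\hat\Bq_h\cdot\Bn-\tau(u_h-\hat u_h)$, so that the skeleton terms collapse to $\langle\hat\Bq_h\cdot\Bn,\overline{\hat u_h}\rangle_{\partial\mathcal T_h}+\tau\|u_h-\hat u_h\|_{0,\partial\mathcal T_h}^2$. Finally, split $\partial\mathcal T_h$ into interior faces and $\partial\Omega$: the interior contribution of $\langle\hat\Bq_h\cdot\Bn,\overline{\hat u_h}\rangle$ vanishes by (\ref{P4}) with $\mu=\hat u_h$ (single-valuedness of $\hat u_h$, i.e. conservativity of $\hat\Bq_h$), while on $\partial\Omega$ the Robin relation (\ref{P3}) with $\mu=\hat u_h$ turns it into $\|\hat u_h\|_{0,\partial\Omega}^2-\langle g,\overline{\hat u_h}\rangle_{\partial\Omega}$. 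The outcome is the identity
$$
{\bf i}\kappa\big(\|u_h\|_{0,\Omega}^2-\|\Bq_h\|_{0,\Omega}^2\big)+\tau\|u_h-\hat u_h\|_{0,\partial\mathcal T_h}^2+\|\hat u_h\|_{0,\partial\Omega}^2=(f,\overline{u_h})_{\mathcal T_h}+\langle g,\overline{\hat u_h}\rangle_{\partial\Omega}.
$$

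Taking the real part annihilates the ${\bf i}\kappa(\cdots)$ term, giving $\tau\|u_h-\hat u_h\|_{0,\partial\mathcal T_h}^2+\|\hat u_h\|_{0,\partial\Omega}^2={\rm Re}\,(f,\overline{u_h})_{\mathcal T_h}+{\rm Re}\,\langle g,\overline{\hat u_h}\rangle_{\partial\Omega}$; Cauchy--Schwarz together with Young's inequality applied to $\|g\|_{0,\partial\Omega}\|\hat u_h\|_{0,\partial\Omega}$ (absorbing $\tfrac12\|\hat u_h\|_{0,\partial\Omega}^2$ on the left) yields (\ref{trace-H}) and, as a byproduct, $\|\hat u_h\|_{0,\partial\Omega}^2\lesssim\|f\|_{0,\Omega}\|u_h\|_{0,\Omega}+\|g\|_{0,\partial\Omega}^2$. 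Taking instead the imaginary part gives $\kappa\|\Bq_h\|_{0,\Omega}^2=\kappa\|u_h\|_{0,\Omega}^2-{\rm Im}\,(f,\overline{u_h})_{\mathcal T_h}-{\rm Im}\,\langle g,\overline{\hat u_h}\rangle_{\partial\Omega}$; estimating the last two terms by Cauchy--Schwarz and Young's inequality, using the byproduct bound to control $\|\hat u_h\|_{0,\partial\Omega}$, and using $\kappa\gtrsim 1$ to absorb the residual $\|u_h\|_{0,\Omega}^2$ into $\kappa\|u_h\|_{0,\Omega}^2$, delivers (\ref{q_h-H}).

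I expect the only real difficulty to be the bookkeeping in the first step: keeping the complex conjugations consistent so that the $(\Bq_h,\overline{\nabla u_h})_{\mathcal T_h}$ contributions cancel exactly and the ${\bf i}\kappa$-term stays purely imaginary, and correctly sorting the skeleton integrals into those that vanish by the conservativity condition (\ref{P4}) and those that reproduce the Robin data through (\ref{P3}). Once the identity is in hand, the two estimates follow by routine Cauchy--Schwarz/Young manipulations.
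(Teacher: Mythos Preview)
Your proposal is correct and follows essentially the same route as the paper: both derive the identical energy identity
\[
{\bf i}\kappa\big(\|u_h\|_{0,\Omega}^2-\|\Bq_h\|_{0,\Omega}^2\big)+\tau\|u_h-\hat u_h\|_{0,\partial\mathcal T_h}^2+\|\hat u_h\|_{0,\partial\Omega}^2=(f,\overline{u_h})_{\mathcal T_h}+\langle g,\overline{\hat u_h}\rangle_{\partial\Omega}
\]
by testing with $(\Bq_h,u_h,\hat u_h)$ and combining the equations (the paper integrates by parts in (\ref{P2}) while you do so in the conjugate of (\ref{P1}), which is immaterial). The paper simply states that this identity ``implies the lemma,'' whereas you spell out the real/imaginary-part splitting and the Cauchy--Schwarz/Young steps; your tacit use of $\kappa\gtrsim 1$ in the last absorption is consistent with the high-wave-number setting assumed throughout the paper.
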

 \begin{proof}
 We choose ${\Br}={\Bq}_h$, $w=u_h$, $\mu=\hat u_h$ in (\ref{P1})-(\ref{P4}) and get
 \begin{align}
 \label{r}
 ({\bf i}\kappa {\Bq}_h,\overline{\Bq}_h)_{\mathcal T_h}-(u_h, \overline{{\rm div \, {\Bq}_h}})_{\mathcal T_h}+\langle \hat u_h,\overline {{\Bq}_h\cdot {\Bn}}\rangle_{\partial \mathcal T_h}&=0,\\
 \label{w}
 ({\bf i}\kappa u_h+{\rm div \, {\Bq}_h},\overline u_h)_{\mathcal T_h}+\langle \tau(u_h-\hat u_h) ,\overline u_h\rangle_{\partial \mathcal T_h}&=(f,\overline u_h)_{\mathcal T_h} ,\\
 \label{mu}
 \langle \hat{\Bq}_h\cdot {\Bn} ,\overline {\hat u_h}\rangle_{\partial \mathcal T_h}&=\langle \hat u_h,\overline {\hat u_h }\rangle_{\partial \Omega} -\langle g, \overline {\hat u_h}\rangle_{\partial \Omega}.
\end{align}
Using (\ref{mu}), the complex conjugation of (\ref{r}) can be rewritten as
\begin{align}
\label{cr} - ({\bf i}\kappa {\Bq}_h,\overline{\Bq}_h)_{\mathcal
T_h}-({\rm div \, {\Bq}_h},\overline{u_h})_{\mathcal T_h}-\langle
\tau(u_h-\hat u_h),\overline{\hat u_h}\rangle_{\partial \mathcal
T_h}+\langle \hat u_h,\overline {\hat u_h }\rangle_{\partial \Omega}
=\langle g, \overline {\hat u_h}\rangle_{\partial \Omega}.
\end{align}
Adding (\ref{cr}) and (\ref{w}) together, the following identity is deduced
\begin{align*}
({\bf i}\kappa u_h,\overline u_h)_{\mathcal T_h}- ({\bf i}\kappa
{\Bq}_h,\overline{\Bq}_h)_{\mathcal T_h}+\langle \tau(u_h-\hat
u_h),\overline{(u_h-\hat u_h)}\rangle_{\partial \mathcal T_h}
+\langle\hat u_h,\overline{\hat u_h}\rangle_{\partial \Omega}\\=(
f,\overline u_h)_{\mathcal T_h}+\langle g,\overline{\hat
u_h}\rangle_{\partial \Omega},
\end{align*}
which implies the lemma.
\end{proof}

Next we use a duality argument to estimate the stability of $u_h$.
Given $u_h\in L^2(\Omega)$, we introduce the dual problem
\begin{align}
 \label{D1}
 -{\bf i}\kappa{\bf \Phi}+\nabla \Psi&=0\qquad \ {\rm in\ }\Omega,\\
  \label{D2}
 {\rm div\, }{\bf \Phi}-{\bf i}\kappa\Psi&=u_h \qquad {\rm in\ }\Omega,\\
  \label{D3}
 {\bf \Phi}\cdot {\Bn}&=\Psi \qquad \ {\rm on\ }\partial \Omega.
\end{align}
In the following lemma, we give some explicit bounds for $\Psi$ and ${\bf \Phi}$.
\begin{lem}\label{lemma46}
For $\Psi$ and ${\bf \Phi}$ defined above, they admit the following
estimate:
\begin{align}
\label{Regularity}
\|\Psi\|_{0,\Omega}+\kappa^{-2}\|\Psi\|_{2,\Omega}+\kappa^{-1}\|\Psi\|_{1,\Omega}+\|\Psi\|_{0,\partial\Omega}
+\kappa^{-1}\|{\bf \Phi}\|_{1,\Omega}\lesssim \|u_h\|_{0,\Omega}.
\end{align}
\end{lem}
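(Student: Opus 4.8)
The plan is to collapse the first-order dual system (\ref{D1})-(\ref{D3}) into a scalar Helmholtz problem for $\Psi$, read off the high-order norms from the cited regularity estimate (\ref{regularity}), and then recover the two remaining ``sharp'' norms $\|\Psi\|_{0,\Omega}$ and $\|\Psi\|_{0,\partial\Omega}$ by a short energy argument.

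First I would eliminate ${\bf \Phi}$. Equation (\ref{D1}) gives ${\bf \Phi}=-\frac{{\bf i}}{\kappa}\nabla\Psi$; inserting this into (\ref{D2}) and (\ref{D3}) shows that $\Psi$ solves $-\triangle\Psi-\kappa^2\Psi=-{\bf i}\kappa u_h$ in $\Omega$ together with the impedance condition $\frac{\partial\Psi}{\partial{\Bn}}-{\bf i}\kappa\Psi=0$ on $\partial\Omega$. Taking complex conjugates, the pair $(\overline{{\bf \Phi}},\overline\Psi)$ solves (\ref{HS})-(\ref{HE}) (equivalently, $\overline\Psi$ solves (\ref{Hem})-(\ref{Robin})) with data $\tilde f={\bf i}\kappa\overline{u_h}$ and $\tilde g=0$, so that $M(\tilde f,\tilde g)=\|\tilde f\|_{0,\Omega}=\kappa\|u_h\|_{0,\Omega}$. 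Feeding this into (\ref{regularity}) yields at once
\[
\kappa^{-1}\|\Psi\|_{2,\Omega}+\|\Psi\|_{1,\Omega}+\|{\bf \Phi}\|_{1,\Omega}\lesssim\kappa\|u_h\|_{0,\Omega} ,
\]
i.e.\ the three bounds $\kappa^{-2}\|\Psi\|_{2,\Omega}$, $\kappa^{-1}\|\Psi\|_{1,\Omega}$, $\kappa^{-1}\|{\bf \Phi}\|_{1,\Omega}\lesssim\|u_h\|_{0,\Omega}$, which are three of the five terms in (\ref{Regularity}).

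For $\|\Psi\|_{0,\Omega}$, (\ref{regularity}) only gives $\|\Psi\|_{0,\Omega}\le\|\Psi\|_{1,\Omega}\lesssim\kappa\|u_h\|_{0,\Omega}$, which is off by a factor $\kappa$; instead I would use equation (\ref{D2}) directly, $\kappa\|\Psi\|_{0,\Omega}\le\|u_h\|_{0,\Omega}+\|{\rm div}\,{\bf \Phi}\|_{0,\Omega}\le\|u_h\|_{0,\Omega}+\|{\bf \Phi}\|_{1,\Omega}\lesssim\kappa\|u_h\|_{0,\Omega}$ (using that $\kappa$ is bounded away from $0$, as throughout the high-wave-number setting; the sharp $\kappa$-weighted stability estimate for the impedance problem would serve equally well), whence $\|\Psi\|_{0,\Omega}\lesssim\|u_h\|_{0,\Omega}$. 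For the boundary term, a plain trace inequality together with the interior bounds yields only the weaker $\|\Psi\|_{0,\partial\Omega}\lesssim\kappa^{1/2}\|u_h\|_{0,\Omega}$, so instead I would test the scalar equation with $\Psi$ and use the impedance condition: Green's identity gives
\[
\|\nabla\Psi\|_{0,\Omega}^2-\kappa^2\|\Psi\|_{0,\Omega}^2+{\bf i}\kappa\int_\Omega u_h\overline\Psi\,dx=\int_{\partial\Omega}\frac{\partial\Psi}{\partial{\Bn}}\overline\Psi\,ds={\bf i}\kappa\|\Psi\|_{0,\partial\Omega}^2 ,
\]
and taking imaginary parts, the two real terms on the left drop out and
\[
\|\Psi\|_{0,\partial\Omega}^2={\rm Re}\int_\Omega u_h\overline\Psi\,dx\le\|u_h\|_{0,\Omega}\,\|\Psi\|_{0,\Omega}\lesssim\|u_h\|_{0,\Omega}^2 .
\]
Collecting the five estimates proves (\ref{Regularity}).

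The routine parts are the elimination of ${\bf \Phi}$ and the invocation of (\ref{regularity}). The delicate point is obtaining $\|\Psi\|_{0,\Omega}$ and $\|\Psi\|_{0,\partial\Omega}$ with no loss in powers of $\kappa$: both are off (by $\kappa$, and by $\kappa^{1/2}$, respectively) if one argues only through (\ref{regularity}) and trace theorems. This is exactly where the Robin boundary condition is essential -- it excludes resonances, giving the sharp $\kappa$-explicit stability -- and where the imaginary part of the energy identity above must replace a trace inequality.
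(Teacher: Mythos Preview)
Your proposal is correct and follows essentially the same route as the paper: reduce to the scalar Helmholtz problem for $\Psi$, invoke the Cummings--Feng regularity (the paper cites \cite{Feng} directly; you use its consequence (\ref{regularity})), and then obtain the sharp bound on $\|\Psi\|_{0,\partial\Omega}$ by testing the weak formulation with $\Psi$ and taking imaginary parts. The only cosmetic difference is that the paper reads off $\|\Psi\|_{0,\Omega}\lesssim\|u_h\|_{0,\Omega}$ directly from \cite{Feng}, whereas you recover it from (\ref{D2}) together with the already-established bound $\|{\bf \Phi}\|_{1,\Omega}\lesssim\kappa\|u_h\|_{0,\Omega}$ (and the standing assumption $\kappa\gtrsim 1$).
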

\begin{proof}
In fact, $\Psi$ satisfies the following equation
\begin{align*}
\triangle \Psi+\kappa^2\Psi&={\bf i}\kappa u_h \quad {\rm in\  }\Omega\\
\nabla \Psi\cdot {\Bn}&={\bf i}\kappa\Psi \quad {\rm on \
}\partial\Omega.
\end{align*}
In \cite{Feng}, it is proved that
\begin{align*}
\|\Psi\|_{0,\Omega}+\kappa^{-2}\|\Psi\|_{2,\Omega}
+\kappa^{-1}\|{\bf \Phi}\|_{1,\Omega}\lesssim \|u_h\|_{0,\Omega}.
\end{align*}
Since $\Psi$ satisfies the following weak formulation
$a(\Psi, v)=({\bf i}\kappa u_h,v)$, where
$$a(\Psi, v):=-(\nabla \Psi, \overline{\nabla v})+\kappa^2(\Psi,\overline{v})+{\bf i}\kappa\langle\Psi, \overline{v}\rangle_{\partial \Omega}.
$$
Testing the above formulation by $v=\Psi$ and taking the imaginary
part yields
$$\kappa\|\Psi\|_{0,\partial\Omega}^2\leq \kappa\|u_h\|_{0,\Omega}\|\Psi\|_{0,\Omega}\lesssim \kappa\|u_h\|_{0,\Omega}^2,
$$
which finishes the proof of this lemma.
\end{proof}

Now we are ready to derive the stability of $u_h$, which plays an important role in the error analysis for the Helmholtz equation.
\begin{thm}\label{theorem31}
 Let $({\Bq}_h, u_h,\hat u_h)\in {\BV}_h^p\times W_h^p\times M_h^p$ be the solutions of (\ref{P1})-(\ref{P4}). Then
 \begin{align}
 \label{u_h}
 \|u_h\|_{0,\Omega}&\lesssim \Big(1+\frac{\kappa^3h^2}{p^2}\Big)\|f\|_{0,\Omega}+\Big(1+\frac{\kappa^{\frac{3}{2}}h}{p}\Big)\|g\|_{0,\partial\Omega},\\
 \label{q_h}
 \|{\Bq}_h\|_{0,\Omega}&\lesssim \Big(1+\frac{\kappa^3h^2}{p^2}\Big)\|f\|_{0,\Omega}+\Big(1+\frac{\kappa^{\frac{3}{2}}h}{p}\Big)\|g\|_{0,\partial\Omega},\\
 \label{trace}
 \|\hat u_h\|_{0,\partial\mathcal T_h}&\lesssim
 \Big(\big(\frac{\kappa h}{p}\big)^{\frac{1}{2}}+ph^{-\frac{1}{2}}\Big)\Big(\big(1+\frac{\kappa^3h^2}{p^2}\big)\|f\|_{0,\Omega}
 +\big(1+\frac{\kappa^{\frac{3}{2}}h}{p}\big)\|g\|_{0,\partial\Omega}\Big).
\end{align}
\end{thm}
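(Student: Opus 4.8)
The plan is to prove the stability bound (\ref{u_h}) for $\|u_h\|_{0,\Omega}$ by a duality argument, and then to obtain (\ref{q_h}) and (\ref{trace}) from it together with Lemma~\ref{lemma35}, Lemma~\ref{lemma33} and the triangle inequality.

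\emph{A duality identity.} Let $({\bf \Phi},\Psi)$ solve the adjoint problem (\ref{D1})--(\ref{D3}) with data $u_h$. Starting from $\|u_h\|_{0,\Omega}^2=(u_h,\overline{u_h})_{\mathcal T_h}$, I would use the conjugate of (\ref{D2}) to write $\overline{u_h}=\overline{{\rm div}\,{\bf \Phi}}+{\bf i}\kappa\overline\Psi$, and then test (\ref{P1}) with ${\Br}={\bf \Pi}_h{\bf \Phi}$ and (\ref{P2}) with $w=\Pi_h\Psi$, where ${\bf \Pi}_h,\Pi_h$ are the $L^2$-projections in (\ref{l2}). Since $\nabla W_h^p\subset{\BV}_h^p$ and ${\rm div}\,{\BV}_h^p\subset W_h^p$, the defining orthogonalities give $(\nabla u_h,\overline{{\bf \Phi}-{\bf \Pi}_h{\bf \Phi}})_{\mathcal T_h}=0$ and $({\rm div}\,{\Bq}_h,\overline{\Psi-\Pi_h\Psi})_{\mathcal T_h}=0$; combining these with integration by parts and with the first adjoint equation (\ref{D1}) (by which the remaining volume terms carrying ${\Bq}_h$ cancel) leaves only $(f,\overline{\Pi_h\Psi})_{\mathcal T_h}$ and boundary contributions. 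The latter are reorganized using the numerical flux (\ref{numerical-flux}), the conservativity $\llbracket\hat{\Bq}_h\rrbracket=0$ (which (\ref{P4}) enforces pointwise, since $\hat{\Bq}_h\cdot{\Bn}$ is a degree-$p$ polynomial on each facet), the relation $\hat{\Bq}_h\cdot{\Bn}=\hat u_h-g$ on $\partial\Omega$ coming from (\ref{P3}), the single-valuedness of the exact traces ${\bf \Phi}\cdot{\Bn}$ and $\Psi$ on interior facets, and the dual boundary condition ${\bf \Phi}\cdot{\Bn}=\Psi$ on $\partial\Omega$. I expect this to produce
\[
\|u_h\|_{0,\Omega}^2=\langle u_h-\hat u_h,\overline{({\bf \Phi}-{\bf \Pi}_h{\bf \Phi})\cdot{\Bn}}\rangle_{\partial\mathcal T_h}+\langle\tau(u_h-\hat u_h),\overline{\Psi-\Pi_h\Psi}\rangle_{\partial\mathcal T_h}+(f,\overline{\Pi_h\Psi})_{\mathcal T_h}+\langle g,\overline{\Psi}\rangle_{\partial\Omega},
\]
with $\Psi$ in the last term optionally replaced by its $L^2(\partial\Omega)$-projection onto $M_h^p$ (irrelevant for the bounds).

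\emph{Bounding the four terms.} The last two terms are controlled by Cauchy--Schwarz, the $L^2$-stability of $\Pi_h$ and the estimates $\|\Psi\|_{0,\Omega}+\|\Psi\|_{0,\partial\Omega}\lesssim\|u_h\|_{0,\Omega}$ from Lemma~\ref{lemma46}, giving $\lesssim\big(\|f\|_{0,\Omega}+\|g\|_{0,\partial\Omega}\big)\|u_h\|_{0,\Omega}$. For the two facet terms I would apply Cauchy--Schwarz on $\partial\mathcal T_h$ and the trace-approximation bounds of Lemma~\ref{lemma34}: (\ref{l21}) componentwise for $\|({\bf \Phi}-{\bf \Pi}_h{\bf \Phi})\cdot{\Bn}\|_{0,\partial\mathcal T_h}\lesssim(h/p)^{1/2}|{\bf \Phi}|_{1,\Omega}$ together with $|{\bf \Phi}|_{1,\Omega}\lesssim\kappa\|u_h\|_{0,\Omega}$, and --- crucially --- the second-order bound (\ref{l22}) for $\|\Psi-\Pi_h\Psi\|_{0,\partial\mathcal T_h}\lesssim(h/p)^{3/2}|\Psi|_{2,\Omega}$ together with $|\Psi|_{2,\Omega}\lesssim\kappa^2\|u_h\|_{0,\Omega}$ (both from Lemma~\ref{lemma46}). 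Substituting $\tau=\frac{p}{\kappa h}$ and using Lemma~\ref{lemma35} in the form $\tau^{1/2}\|u_h-\hat u_h\|_{0,\partial\mathcal T_h}\lesssim\|f\|_{0,\Omega}^{1/2}\|u_h\|_{0,\Omega}^{1/2}+\|g\|_{0,\partial\Omega}$, one finds both facet terms to be $\lesssim\frac{\kappa^{3/2}h}{p}\big(\|f\|_{0,\Omega}^{1/2}\|u_h\|_{0,\Omega}^{1/2}+\|g\|_{0,\partial\Omega}\big)\|u_h\|_{0,\Omega}$. Summing the four contributions, dividing by $\|u_h\|_{0,\Omega}$, and using Young's inequality to absorb $\frac{\kappa^{3/2}h}{p}\|f\|_{0,\Omega}^{1/2}\|u_h\|_{0,\Omega}^{1/2}$ (which produces the coefficient $\big(\frac{\kappa^{3/2}h}{p}\big)^2=\frac{\kappa^3h^2}{p^2}$ on $\|f\|_{0,\Omega}$) yields exactly (\ref{u_h}).

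\emph{The remaining bounds, and the main obstacle.} Estimate (\ref{q_h}) follows at once by inserting (\ref{u_h}) into (\ref{q_h-H}) and taking square roots (using $\kappa\gtrsim1$, hence $\kappa^{-1/2}\lesssim1$). For (\ref{trace}) I would write $\|\hat u_h\|_{0,\partial\mathcal T_h}\le\|u_h\|_{0,\partial\mathcal T_h}+\|u_h-\hat u_h\|_{0,\partial\mathcal T_h}$, bound $\|u_h\|_{0,\partial\mathcal T_h}\lesssim ph^{-1/2}\|u_h\|_{0,\Omega}$ by Lemma~\ref{lemma33} applied elementwise, and $\|u_h-\hat u_h\|_{0,\partial\mathcal T_h}\lesssim(\kappa h/p)^{1/2}\big(\|f\|_{0,\Omega}^{1/2}\|u_h\|_{0,\Omega}^{1/2}+\|g\|_{0,\partial\Omega}\big)$ by (\ref{trace-H}), and then substitute (\ref{u_h}). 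The main obstacle is the duality identity itself: one has to be careful with all complex conjugations and, more substantively, to verify that every boundary remainder that is not Galerkin-orthogonal actually collapses --- which relies on the pointwise conservation $\llbracket\hat{\Bq}_h\rrbracket=0$ and the single-valuedness of ${\bf \Phi}\cdot{\Bn}$ and $\Psi$ on interior facets. A second, quantitative subtlety is that the $\tau$-weighted facet term must be treated with the $H^2$-regularity of $\Psi$ and the sharper trace bound (\ref{l22}); with only (\ref{l21}) one would be left with a power of $\kappa$ in the $\|f\|_{0,\Omega}$-coefficient not dominated by $1+\frac{\kappa^3h^2}{p^2}$.
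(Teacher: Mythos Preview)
Your proposal is correct and follows essentially the same route as the paper: the same duality identity (the paper writes the boundary source term as $\langle P_Mg,\Psi\rangle_{\partial\Omega}$ rather than $\langle g,\overline\Psi\rangle_{\partial\Omega}$, which is the precise form since (\ref{P3}) only gives $\hat{\Bq}_h\cdot{\Bn}=\hat u_h-P_Mg$ pointwise, but as you note this is immaterial for the bounds), the same use of Lemmas~\ref{lemma34}--\ref{lemma35} and~\ref{lemma46}, the same observation that $\tau=\tfrac{p}{\kappa h}$ balances the two facet contributions at $\kappa^{3/2}h/p$, and the same route to (\ref{q_h}) and (\ref{trace}) via (\ref{q_h-H}), Lemma~\ref{lemma33} and the triangle inequality.
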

 \begin{proof}
Using (\ref{D2}), Green formulation and the definition of projection
operators, we obtain
 \begin{align*}
 &(u_h,\overline u_h)_{\mathcal T_h}=(u_h, \overline{{\rm div \, }{\bf \Phi}-{\bf i}\kappa\Psi})_{\mathcal T_h}=(u_h, \overline{{\rm div \, }{\bf \Phi}})_{\mathcal T_h}+{\bf i}\kappa(u_h, \overline \Psi)_{\mathcal T_h}\\
 &=-(\nabla u_h,\overline{{\bf \Phi}})_{\mathcal T_h}+\langle u_h, \overline{{\bf \Phi}\cdot {\Bn}}\rangle_{\partial\mathcal T_h}+{\bf i}\kappa(u_h, \overline \Psi)\\
 &=-(\nabla u_h,\overline{{\bf \Pi}_h{\bf \Phi}})_{\mathcal T_h}+\langle u_h, \overline{{\bf \Pi}_h{\bf \Phi}\cdot {\Bn}}\rangle_{\partial\mathcal T_h}+\langle u_h, \overline{({\bf \Phi}-{\bf \Pi}_h{\bf \Phi})\cdot {\Bn}}\rangle_{\partial\mathcal T_h}+{\bf i}\kappa(u_h, \overline \Psi)_{\mathcal T_h}\\
 &=(u_h, \overline{{\rm div \, }{\bf \Pi}_h{\bf \Phi}})_{\mathcal T_h}+\langle u_h, \overline{({\bf \Phi}-{\bf \Pi}_h{\bf \Phi})\cdot {\Bn}}\rangle_{\partial\mathcal T_h}+{\bf i}\kappa(u_h, \overline {\Pi_h\Psi})_{\mathcal T_h}.
  \end{align*}
 Hence using (\ref{P1}) and the fact that ${\bf \Phi}\cdot {\Bn}$ is continuous across the inner edges, the above equality can be rewritten as
 \begin{align*}
 &(u_h,\overline u_h)_{\mathcal T_h}=({\bf i}\kappa{\Bq}_h,\overline{{\bf \Pi}_h{\bf \Phi}})_{\mathcal T_h}+\langle \hat u_h, \overline{{\bf \Pi}_h{\bf \Phi}\cdot {\Bn}}\rangle_{\partial\mathcal T_h}-\langle \hat u_h, \overline{{\bf \Phi}\cdot {\Bn}}\rangle_{\partial\mathcal T_h}+\langle \hat u_h, \overline{{\bf \Phi}\cdot {\Bn}}\rangle_{\partial\mathcal T_h}\\
 &+\langle u_h, \overline{({\bf \Phi}-{\bf \Pi}_h{\bf \Phi})\cdot {\Bn}}\rangle_{\partial\mathcal T_h}+{\bf i}\kappa(u_h, \overline {\Pi_h\Psi})_{\mathcal T_h}\\
 &=({\bf i}\kappa{\Bq}_h,\overline{{\bf \Phi}})_{\mathcal T_h}+\langle u_h-\hat u_h, \overline{({\bf \Phi}-{\bf \Pi}_h{\bf \Phi})\cdot {\Bn}}\rangle_{\partial\mathcal T_h}+\langle \hat u_h, \overline{{\bf \Phi}\cdot {\Bn}}\rangle_{\partial\Omega}+{\bf i}\kappa(u_h, \overline {\Pi_h\Psi})_{\mathcal T_h}.
  \end{align*}
Green formulation and (\ref{D1}) indicate that
\begin{align*}
({\bf i}\kappa{\Bq}_h,\overline{{\bf \Phi}})_{\mathcal
T_h}=({\Bq}_h,\overline{-\nabla \Psi})_{\mathcal T_h}=({\rm
div}\,{\Bq}_h,\overline{ \Psi})_{\mathcal T_h}- \langle{\Bq}_h\cdot
{\Bn}, \overline \Psi\rangle_{\partial\mathcal T_h}=({\rm
div}\,{\Bq}_h,\overline{\Pi_h \Psi})_{\mathcal T_h}-
\langle{\Bq}_h\cdot {\Bn}, \overline \Psi\rangle_{\partial\mathcal
T_h}.
\end{align*}
Combining (\ref{P2})-(\ref{P4}) and (\ref{D3}) gives
 \begin{align*}
&(u_h,\overline u_h)_{\mathcal T_h}=({\bf i}\kappa u_h+{\rm div\, }{\Bq}_h,\overline {\Pi_h\Psi})_{\mathcal T_h}-\langle{\Bq}_h\cdot {\Bn}, \overline \Psi\rangle_{\partial\mathcal T_h}\\&+\langle u_h-\hat u_h, \overline{({\bf \Phi}-{\bf \Pi}_h{\bf \Phi})\cdot {\Bn}}\rangle_{\partial\mathcal T_h}+\langle \hat u_h, \overline{{\bf \Phi}\cdot {\Bn}}\rangle_{\partial\Omega}\\
 &=(f,\overline {\Pi_h\Psi})_{\mathcal T_h}-\langle\tau(u_h-\hat u_h),\overline {\Pi_h\Psi}\rangle_{\partial\mathcal T_h}-\langle{\Bq}_h\cdot {\Bn}, \overline \Psi\rangle_{\partial\mathcal T_h}+\langle\hat{\Bq}_h\cdot {\Bn}, \overline \Psi\rangle_{\partial\mathcal T_h}\\&-\langle\hat{\Bq}_h\cdot {\Bn}, \overline \Psi\rangle_{\partial\Omega}
 +\langle u_h-\hat u_h, \overline{({\bf \Phi}-{\bf \Pi}_h{\bf \Phi})\cdot {\Bn}}\rangle_{\partial\mathcal T_h}+\langle \hat u_h, \overline{{\bf \Phi}\cdot {\Bn}}\rangle_{\partial\Omega}\\
 &=(f,\overline {\Pi_h\Psi})_{\mathcal T_h}+\langle\tau(u_h-\hat u_h),\overline {\Psi-\Pi_h\Psi}\rangle_{\partial\mathcal T_h}+\langle u_h-\hat u_h, \overline{({\bf \Phi}-{\bf \Pi}_h{\bf \Phi})\cdot {\Bn}}\rangle_{\partial\mathcal T_h}+\langle P_Mg,\Psi\rangle_{\partial\Omega},
 \end{align*}
 where we have used that the normal component
of  $\hat {{\Bq}}_h$ across interelement boundaries is continuous
and $P_Mg\in \mathcal P_p(\partial \mathcal T_h\cap \partial
\Omega), \ \langle P_Mg, v\rangle_{\partial \Omega}=\langle g,
v\rangle_{\partial \Omega},\ \forall v \in \mathcal P_p(\partial
\mathcal T_h\cap \partial \Omega)$.
 So we can get
\begin{align*}
\|u_h\|_{0,\Omega}^2&\lesssim \|f\|_{0,\Omega}\|\Psi\|_{0,\Omega}+
\tau\|(u_h-\hat u_h)\|_{0,\partial\mathcal
T_h}\|{\Psi-\Pi_h\Psi}\|_{0,\partial\mathcal T_h}\\&+ \|u_h-\hat
u_h\|_{0,\partial\mathcal T_h} \|{\bf \Phi}-{\bf \Pi}_h{\bf
\Phi}\|_{0,\partial\mathcal T_h}+\|g\|_{0,\partial
\Omega}\|\Psi\|_{0,\partial \Omega}.
\end{align*}
Applying Lemmas \ref{lemma34}-\ref{lemma35} and the regularity estimate (\ref{Regularity}), we get
\begin{align*}
\|u_h\|_{0,\Omega}^2&\lesssim
\|f\|_{0,\Omega}\|u_h\|_{0,\Omega}+\|g\|_{0,\partial
\Omega}\|u_h\|_{0,\Omega}\\&+\Big(\tau^{\frac{1}{2}}\kappa^2\big(\frac{h}{p}\big)^{\frac{3}{2}}+\tau^{-\frac{1}{2}}\kappa\big(\frac{h}{p}\big)^{\frac{1}{2}}\Big)
\Big(\|f\|^{\frac{1}{2}}_{0,\Omega}\|u_h\|^{\frac{1}{2}}_{0,\Omega}+\|g\|_{0,\partial
\Omega}\Big)\|u_h\|_{0,\Omega}.
\end{align*}
Note that we choose $\tau=\frac{p}{\kappa h}$ to get the minimum of
the term
$\tau^{\frac{1}{2}}\kappa^2(\frac{h}{p})^{\frac{3}{2}}+\tau^{-\frac{1}{2}}\kappa(\frac{h}{p})^{\frac{1}{2}}$.
Eliminating $\|u_h\|_{0,\Omega}$ from both sides of the equation, we
can get
\begin{align*}
\|u_h\|_{0,\Omega}&\leq
C\|f\|_{0,\Omega}+C\|g\|_{0,\partial
\Omega}+C\kappa^{\frac{3}{2}}\frac{h}{p}
(\|f\|^{\frac{1}{2}}_{0,\Omega}\|u_h\|^{\frac{1}{2}}_{0,\Omega}+\|g\|_{0,\partial
\Omega})\\
&\leq C(1+\frac{\kappa^3
h^2}{p^2})\|f\|_{0,\Omega}+C(1+\frac{\kappa^{\frac{3}{2}}h}{p})\|g\|_{0,\partial\Omega}+\delta\|u_h\|_{0,\Omega}
.
\end{align*}
Choosing $\delta \leq \frac{1}{2}$, (\ref{u_h}) is obtained. Using
(\ref{q_h-H}), the bound for ${\Bq}_h$ is deduced. According to
Lemma \ref{lemma33},
$$\|u_h\|_{0,\partial\Omega}\lesssim ph^{-{\frac{1}{2}}}\|u_h\|_{0,\Omega},
$$
which combined with (\ref{u_h}), (\ref{trace-H}) and the triangle inequality yields (\ref{trace}).
\end{proof}

\section{Error estimates of an auxiliary problem}
In this section, we derive the error estimates of the solutions for the auxiliary problem
\begin{align}
 {\bf i}\kappa{\BQ}+\nabla U &=0\qquad \qquad \quad \ {\rm in\ }\Omega, \nn \\
 {\rm div\, }{\BQ} -{\bf i}\kappa U &=f - 2 {\bf i} \kappa u \qquad {\rm in\ }\Omega,\nn \\
 -{\BQ}\cdot {\Bn}+U&=g \qquad \qquad \quad \ {\rm on\ }\partial \Omega,\nn
\end{align}
where $u,f$ and $g$ are determined by the problem
(\ref{HS})-(\ref{HE}). The HDG scheme of this problem is to find
$({\BQ}_h, U_h,\hat \lambda_h)\in {\BV}_h^p\times W_h^p\times M_h^p$
such that
\begin{align}
\label{AP1}
({\bf i}\kappa {\BQ}_h,\overline{\Br})_{\mathcal T_h}-(U_h, \overline{{\rm div \, {\Br}}})_{\mathcal T_h}+\langle  \lambda_h,\overline {{\Br}\cdot {\Bn}}\rangle_{\partial \mathcal T_h}&=0,\\
\label{AP2}
-({\bf i}\kappa U_h,\overline w)_{\mathcal T_h}-({\BQ}_h,\overline{\nabla w})_{\mathcal T_h}+\langle \hat {\BQ}_h\cdot {\Bn} ,\overline w\rangle_{\partial \mathcal T_h}&=(f-2{\bf i}\kappa u,\overline w)_{\mathcal T_h} ,\\
\label{AP3}
\langle -\hat{\BQ}_h\cdot {\Bn}+ \ \lambda_h,\overline \mu\rangle_{\partial \Omega}&=\langle g, \overline \mu\rangle_{\partial \Omega},\\
\label{AP4} \langle \hat{\BQ}_h\cdot {\Bn},\overline
\mu\rangle_{\partial \mathcal T_h \backslash
 \partial \Omega}&=0,
\end{align}
for all ${\Br}\in {\BV}_h^p$, $w\in W_h^p$ and $\mu\in M_h^p$, where
\begin{align}
\hat {\BQ}_h={\BQ}_h+\tau (U_h-\lambda_h){\Bn} \qquad {\rm on } \
\partial \mathcal T_h.
\end{align}
Inserting the expression of $\hat {\BQ}_h$ into (\ref{AP3}) and
(\ref{AP4}), we obtain that on the edge $e\in \partial \mathcal
T_h\backslash\partial \Omega$
\begin{align*}
\lambda_h&=\{\hspace{-0.1cm}\{U_h\}\hspace{-0.1cm}\}+\frac{1}{2\tau}\llbracket{\BQ}_h\rrbracket,\\
\hat{{\BQ}}_h\cdot {\Bn} &= \{\hspace{-0.1cm}\{ {\BQ}_h
\}\hspace{-0.1cm}\} \cdot {\Bn}+ \frac{\tau}{2} \llbracket
U_h\rrbracket {\Bn},
\end{align*}
and on the boundary edge $e\in \partial \mathcal T_h\cap\partial \Omega$
\begin{align*}
\lambda_h&=\frac{{\BQ}_h\cdot {\Bn}+\tau U_h}{1+\tau}+\frac{P_Mg}{1+\tau},\\
\hat{{\BQ}}_h\cdot {\Bn}-{{\BQ}}_h\cdot
{\Bn}&=\frac{\tau(U_h-{\BQ}_h\cdot {\Bn}-P_Mg)}{1+\tau}.
\end{align*}
We can substitute the above expressions into
(\ref{AP1})-(\ref{AP4}), and get the equivalent formulations of
${{\BQ}}_h$ and $U_h$ as follows:
\begin{equation}\begin{split}
\label{M1} {\BB}_1({\BQ}_h,{U}_h; {\Br}, w) & := ({\bf i}\kappa
{\BQ}_h,\overline{\Br})_{\mathcal T_h}-(U_h, \overline{{\rm div \,
{\Br}}})_{\mathcal T_h}+\sum_{e\in\mathcal E_h^0}\big(\langle
\{\hspace{-0.1cm}\{U_h\}\hspace{-0.1cm}\},\llbracket\overline{{\Br}}\rrbracket\rangle_e+
\langle\frac{1}{2\tau}\llbracket{\BQ}_h\rrbracket,\llbracket\overline{{\Br}}\rrbracket\rangle_e\big)\\
&+\langle \frac{{\BQ}_h\cdot {\Bn}}{1+\tau},\overline{{\Br}\cdot
{\Bn}}\rangle_{\partial \Omega}+ \langle
\frac{\tau}{1+\tau}U_h,\overline{{\Br}\cdot {\Bn}}\rangle_{\partial
\Omega}= -\langle \frac{1}{1+\tau}g,\overline{{\Br}\cdot
{\Bn}}\rangle_{\partial \Omega},
\end{split}
\end{equation}
\begin{equation}\begin{split}
\label{M2} {\BB}_2({\BQ}_h,{U}_h; {\Br}, w) &:=  -({\bf i}\kappa
U_h,\overline w)_{\mathcal T_h}+({\rm div\, }{\BQ}_h,\overline{
w})_{\mathcal T_h}-\sum_{e\in\mathcal
E_h^0}\langle\llbracket{\BQ}_h\rrbracket,\{\hspace{-0.1cm}\{\overline{w}\}\hspace{-0.1cm}\}\rangle_e+
\sum_{e\in\mathcal E_h^0}\langle\frac{\tau}{2}\llbracket
U_h\rrbracket,\llbracket\overline{w}\rrbracket\rangle_e\\&+
\big\langle\frac{\tau}{1+\tau}(U_h-{\BQ}_h\cdot{\Bn}), \overline
w\big\rangle_{\partial \Omega}=(f-2{\bf i}\kappa u,\overline
w)_{\mathcal T_h}+\big\langle\frac{\tau}{1+\tau}g, \overline
w\big\rangle_{\partial \Omega},
\end{split}
\end{equation}
for all ${\Br}\in {\BV}_h^p$, $w\in W_h^p$. Define
\begin{align*}
&\mathcal A({\BQ}_h,U_h;{\Br},w):={\BB}_1({\BQ}_h,{U}_h; {\Br}, w) +
\overline{{\BB}_2({\BQ}_h,{U}_h; {\Br}, w)},
\end{align*}
and
\begin{align*}
&\mathcal A_1({\BQ}_h,U_h;{\Br},w):={\BB}_1({\BQ}_h,{U}_h; {\Br}, w)
+{\BB}_2({\BQ}_h,{U}_h; {\Br}, w).
\end{align*}
An obvious observation is that
\begin{equation}\begin{split}
\label{bilinear} \mathcal A({\BQ}_h,U_h;{\BQ}_h,U_h)&={\bf
i}\kappa\big(\|{\BQ}_h\|_{0,\Omega}^2+\|U_h\|_{0,\Omega}^2\big)+
\sum_{e\in\mathcal
E_h^0}\big(\frac{1}{2\tau}\|\llbracket{\BQ}_h\rrbracket\|_{0,e}^2+\frac{\tau}{2}\|\llbracket
U_h\rrbracket\|_{0,e}^2\big)\\&
+\frac{1}{1+\tau}\|{\BQ}_h\cdot{\Bn}\|_{0,\partial\Omega}^2
+\frac{\tau}{1+\tau}\|U_h\|_{0,\partial\Omega}^2.
\end{split}
\end{equation}
Since the formulation is consistent, we have
\begin{align}
\label{consistent} \mathcal A({\Bq}-{\BQ}_h, u-U_h;{\Br},w)=\mathcal
A_1({\Bq}-{\BQ}_h, u-U_h;{\Br},w)=0,
\end{align}
for all ${\Br}\in {\BV}_h^p$, $w\in W_h^p$. We first give the error
estimation of the flux ${\BQ}_h$, and then use the duality argument
to bound the $L^2$-error of the discrete solution $U_h$.
\begin{thm}\label{theorem41}
Let ${{\BQ}}_h$ and $U_h$ be the solution of (\ref{M1})-(\ref{M2}).
Denote $e_{\Bq}:={\Bq}-{\BQ}_h={\Bq}-{\bf \Pi}_h{\Bq}+{\bf
\Pi}_he_{\Bq}$, $e_u:=u-U_h=u-\Pi_hu+\Pi_he_u$ and
\begin{align*}
E :=&  \ \kappa\|{\bf \Pi}_he_{\Bq}\|_{0,\Omega}^2+\kappa\|
\Pi_he_u\|_{0,\Omega}^2+
\sum_{e\in\mathcal E_h^0}(\frac{1}{2\tau}\|\llbracket{\bf \Pi}_he_{\Bq}\rrbracket\|_{0,e}^2+\frac{\tau}{2}\|\llbracket\Pi_he_u\rrbracket\|_{0,e}^2)\\
&+\frac{1}{1+\tau}\|{\bf
\Pi}_he_{\Bq}\cdot{\Bn}\|_{0,\partial\Omega}^2
+\frac{\tau}{1+\tau}\|\Pi_he_u\|_{0,\partial\Omega}^2.
\end{align*}
Then there hold the following estimates:
\begin{align}
\label{E}
E&\lesssim \frac{\kappa h^2}{p^2}M^2(\tilde f,\tilde g),\\
\label{Q_h} \|{\Bq}-{\BQ}_h\|_{0,\Omega}&\lesssim
\frac{h}{p}M(\tilde f,\tilde g).
\end{align}
\end{thm}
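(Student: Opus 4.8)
The plan is to establish the two estimates in the order~\eqref{E} then~\eqref{Q_h}, exploiting the energy identity~\eqref{bilinear} and the consistency relation~\eqref{consistent}. First I would test the consistency identity $\mathcal A(e_{\Bq},e_u;{\Br},w)=0$ with the discrete part of the error, i.e. ${\Br}={\bf\Pi}_he_{\Bq}$ and $w=\Pi_he_u$, and split $e_{\Bq}=({\Bq}-{\bf\Pi}_h{\Bq})+{\bf\Pi}_he_{\Bq}$, $e_u=(u-\Pi_hu)+\Pi_he_u$. By~\eqref{bilinear}, the term $\mathcal A({\bf\Pi}_he_{\Bq},\Pi_he_u;{\bf\Pi}_he_{\Bq},\Pi_he_u)$ reproduces exactly the quantity whose imaginary (for the $\kappa$-terms) and real (for the jump/boundary terms) parts sum to $E$; taking real and imaginary parts and adding gives $E\lesssim |\mathcal A({\Bq}-{\bf\Pi}_h{\Bq},u-\Pi_hu;{\bf\Pi}_he_{\Bq},\Pi_he_u)|$. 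The right side is a sum of volume and interface pairings involving the projection errors ${\Bq}-{\bf\Pi}_h{\Bq}$ and $u-\Pi_hu$; here the orthogonality properties of ${\bf\Pi}_h$ and $\Pi_h$ kill several volume terms (e.g. $(U_h,\overline{{\rm div}\,{\Br}})$-type terms with $\Pi_h$-orthogonality, and $({\rm div}\,{\BQ}_h,\overline w)$-type terms), and what survives are interface and boundary terms of the form $\langle\{\!\{u-\Pi_hu\}\!\},\llbracket\overline{{\bf\Pi}_he_{\Bq}}\rrbracket\rangle$, $\langle\llbracket{\Bq}-{\bf\Pi}_h{\Bq}\rrbracket,\{\!\{\overline{\Pi_he_u}\}\!\}\rangle$, and their boundary analogues.

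The second step is to bound each surviving term by Cauchy--Schwarz, absorbing the $\Pi_h$-part factors into $E$ via Young's inequality. The key point is matching the scaling of the stabilization parameter $\tau=\frac{p}{\kappa h}$: for instance $\langle\{\!\{u-\Pi_hu\}\!\},\llbracket\overline{{\bf\Pi}_he_{\Bq}}\rrbracket\rangle\le \big(2\tau\big)^{1/2}\|u-\Pi_hu\|_{0,\partial\mathcal T_h}\cdot\big(\frac{1}{2\tau}\big)^{1/2}\|\llbracket{\bf\Pi}_he_{\Bq}\rrbracket\|_{0,\partial\mathcal T_h}$, so the first factor contributes $\tau\|u-\Pi_hu\|_{0,\partial\mathcal T_h}^2$, which by~\eqref{l21} of Lemma~\ref{lemma34} and the regularity~\eqref{regularity} is $\lesssim \tau\cdot\frac{h}{p}|u|_{1,\Omega}^2=\frac{1}{\kappa}|u|_{1,\Omega}^2\lesssim\frac{1}{\kappa}M^2(\tilde f,\tilde g)$. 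Multiplying through to match the $\kappa$-weighted norm in $E$ gives a bound $\lesssim\frac{\kappa h^2}{p^2}M^2$ — the exact statement desired (one carries a $\kappa$ through because $E$ measures $\kappa\|\cdot\|^2$). The other interface and boundary terms are handled identically, using~\eqref{l21} for the $u$-projection traces and the analogous bound $\|{\Bq}-{\bf\Pi}_h{\Bq}\|_{0,\partial\mathcal T_h}\lesssim(h/p)^{1/2}|{\Bq}|_{1}$ (from the vector-valued version of Lemma~\ref{lemma34}, or via Lemma~\ref{lemma33} plus~\eqref{hpip0-1}) for the ${\Bq}$-projection traces, together with $\|{\Bq}-{\bf\Pi}_h{\Bq}\|_{0,\Omega}\lesssim\frac{h}{p}|{\Bq}|_1$ and $\|u-\Pi_hu\|_{0,\Omega}\lesssim\frac hp|u|_1$. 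Summing and using Young's inequality with a small parameter to absorb all $E$-type factors on the right yields~\eqref{E}.

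For~\eqref{Q_h}, I would use the triangle inequality $\|{\Bq}-{\BQ}_h\|_{0,\Omega}\le\|{\Bq}-{\bf\Pi}_h{\Bq}\|_{0,\Omega}+\|{\bf\Pi}_he_{\Bq}\|_{0,\Omega}$: the first term is $\lesssim\frac hp|{\Bq}|_1\lesssim\frac hp M(\tilde f,\tilde g)$ by~\eqref{hpip0-1}-type bounds and~\eqref{regularity}, while the second satisfies $\kappa\|{\bf\Pi}_he_{\Bq}\|_{0,\Omega}^2\le E\lesssim\frac{\kappa h^2}{p^2}M^2$, hence $\|{\bf\Pi}_he_{\Bq}\|_{0,\Omega}\lesssim\frac hp M(\tilde f,\tilde g)$; adding gives~\eqref{Q_h}. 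The main obstacle I anticipate is the bookkeeping in step one: correctly identifying, after the orthogonality cancellations, precisely which interface/boundary pairings survive in $\mathcal A({\Bq}-{\bf\Pi}_h{\Bq},u-\Pi_hu;{\bf\Pi}_he_{\Bq},\Pi_he_u)$ and confirming that each can be controlled by the corresponding term of $E$ with exactly the right powers of $\tau=\frac{p}{\kappa h}$ — in particular making sure the choice of $\tau$ is what makes the trace contributions scale as $\frac{\kappa h^2}{p^2}M^2$ rather than something worse. The fact that this problem has the "good" sign structure (the auxiliary source $f-2{\bf i}\kappa u$ is only used as data, not re-expanded) means no indefiniteness issue arises here; indefiniteness will only bite in the subsequent duality argument for $\|U_h\|_{0,\Omega}$, not in this flux estimate.
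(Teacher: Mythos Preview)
Your overall strategy---testing consistency with the discrete error, invoking~\eqref{bilinear}, exploiting $L^2$-orthogonality of ${\bf\Pi}_h,\Pi_h$ to kill the volume pairings, and finishing with Cauchy--Schwarz/Young---is exactly the paper's approach, and your derivation of~\eqref{Q_h} from~\eqref{E} via the triangle inequality is fine.

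There is, however, a genuine gap in your scaling computation. For the term $\tau\|u-\Pi_hu\|_{0,\partial\mathcal T_h}^2$ you invoke the $H^1$ trace bound~\eqref{l21}, obtaining
\[
\tau\cdot\frac{h}{p}|u|_{1,\Omega}^2 = \frac{p}{\kappa h}\cdot\frac{h}{p}\,|u|_{1,\Omega}^2 = \frac{1}{\kappa}|u|_{1,\Omega}^2 \lesssim \frac{1}{\kappa}M^2(\tilde f,\tilde g).
\]
This is \emph{not} $\frac{\kappa h^2}{p^2}M^2$, and your sentence ``multiplying through to match the $\kappa$-weighted norm in $E$'' does not repair it: the jump and boundary contributions in $E$ carry no extra factor of $\kappa$, so after Young's inequality you would only conclude $E\lesssim \frac{1}{\kappa}M^2+\cdots$, which fails to give~\eqref{E} (and hence~\eqref{Q_h}) in the regime $\kappa h/p\ll 1$.

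The fix is to use the $H^2$ trace estimate~\eqref{l22} for the scalar projection together with the regularity $|u|_{2,\Omega}\lesssim\kappa M(\tilde f,\tilde g)$ from~\eqref{regularity}:
\[
\tau\|u-\Pi_hu\|_{0,\partial\mathcal T_h}^2 \lesssim \frac{p}{\kappa h}\cdot\Big(\frac{h}{p}\Big)^3|u|_{2,\Omega}^2 = \frac{h^2}{\kappa p^2}\,|u|_{2,\Omega}^2 \lesssim \frac{\kappa h^2}{p^2}M^2(\tilde f,\tilde g).
\]
For the companion term $\tau^{-1}\|{\Bq}-{\bf\Pi}_h{\Bq}\|_{0,\partial\mathcal T_h}^2$ your $H^1$ bound~\eqref{l21} (with $|{\Bq}|_{1,\Omega}\lesssim M$) does give the correct rate $\frac{\kappa h^2}{p^2}M^2$; it is only the $u$-trace term that requires the sharper $H^2$ estimate. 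Once you make this substitution, the remainder of your argument goes through as written and coincides with the paper's proof.
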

\begin{proof}
Direct calculation shows that
\begin{align}
\label{eq} \sum_{T\in \mathcal T_h}\int_{\partial T}\xi {\bf
\Theta}\cdot{\Bn}ds=\sum_{e\in \mathcal E_h}\int_e\llbracket
\xi\rrbracket\{\hspace{-0.1cm}\{{\bf
\Theta}\}\hspace{-0.1cm}\}ds+\sum_{e\in \mathcal
E_h^0}\int_e\{\hspace{-0.1cm}\{\xi \}\hspace{-0.1cm}\}\llbracket{\bf
\Theta}\rrbracket ds,
\end{align}
for all ${\bf \Theta}\in \big(H^1(\Omega_h)\big)^d$ and $\xi \in
H^1(\Omega_h)$. According to (\ref{consistent}) and (\ref{eq}) we
have
\begin{align*}
&\mathcal A({\bf \Pi}_he_{\Bq},\Pi_he_u;{\bf
\Pi}_he_{\Bq},\Pi_he_u)=
\mathcal A({\bf \Pi}_h{\Bq}-{\BQ}_h,\Pi_hu-U_h;{\bf \Pi}_he_{\Bq},\Pi_he_u)\\
&=\mathcal A({\bf \Pi}_h{\Bq}-{\Bq},\Pi_hu-u;{\bf
\Pi}_he_{\Bq},\Pi_he_u)+
\mathcal A({\Bq}-{\BQ}_h,u-U_h;{\bf \Pi}_he_{\Bq},\Pi_he_u)\\
&=\mathcal A({\bf \Pi}_h{\Bq}-{\Bq},\Pi_hu-u;{\bf \Pi}_he_{\Bq},\Pi_he_u)\\
&=\sum_{e\in\mathcal E_h^0}\big(\langle
\{\hspace{-0.1cm}\{\Pi_hu-u\}\hspace{-0.1cm}\},\llbracket\overline{{\bf
\Pi}_he_{\Bq}}\rrbracket\rangle_e+
\langle\frac{1}{2\tau}\llbracket{\bf
\Pi}_h{\Bq}-{\Bq}\rrbracket,\llbracket\overline{{\bf
\Pi}_he_{\Bq}}\rrbracket\rangle_e\big)\\&+\big\langle \frac{({\bf
\Pi}_h{\Bq}-{\Bq})\cdot {\Bn}}{1+\tau},\overline{{\bf
\Pi}_he_{\Bq}\cdot {\Bn}}\big\rangle_{\partial \Omega}+ \big\langle
\frac{\tau}{1+\tau}(\Pi_hu-u),\overline{{\bf \Pi}_he_{\Bq}\cdot
{\Bn}}\big\rangle_{\partial \Omega}\\&+\sum_{e\in\mathcal
E_h^0}\langle \llbracket{ \Pi_he_u
}\rrbracket,\overline{\{\hspace{-0.1cm}\{{\bf
\Pi}_h{\Bq}-{\Bq}\}\hspace{-0.1cm}\}}\rangle_e+ \sum_{e\in\mathcal
E_h^0}\langle\frac{\tau}{2}\llbracket{\Pi_he_u}\rrbracket,\overline{\llbracket\Pi_hu-u\rrbracket}\rangle_e
\\&+\frac{\tau}{1+\tau}\langle{\Pi_he_u},\overline{\Pi_hu-u}\rangle_{\partial \Omega}+\frac{1}{1+\tau}\langle { \Pi_he_u },\overline{({\bf \Pi}_h{\Bq}-{\Bq})\cdot{\Bn}}\rangle_{\partial \Omega}.
\end{align*}
Using (\ref{bilinear}) and the Young's inequality we obtain
\begin{align*}
\frac{1}{2}E&\leq C_{\delta}\tau\sum_{e\in\mathcal E_h^0}\|\Pi_hu-u\|_{0,e}^2+\delta\sum_{e\in\mathcal E_h^0}\frac{1}{2\tau}\|\llbracket{\bf \Pi}_he_{\Bq}\rrbracket\|_{0,e}^2+C_{\delta}\frac{1}{2\tau}\sum_{e\in\mathcal E_h^0}\|{\bf \Pi}_h{\Bq}-{\Bq}\|_{0,e}^2\\
&+\frac{C_{\delta}}{1+\tau}\|{\bf \Pi}_h{\Bq}-{\Bq}\|_{0,\partial
\Omega}^2+\frac{\delta}{1+\tau}\|{\bf
\Pi}_he_{\Bq}\cdot{\Bn}\|_{0,\partial\Omega}^2+
C_{\delta}\tau\|\Pi_hu-u\|_{0,\partial\Omega}^2\\
&+C_{\delta}\tau^{-1}\sum_{e\in\mathcal E_h^0}\|{\bf
\Pi}_h{\Bq}-{\Bq}\|_{0,e}^2+\delta \sum_{e\in\mathcal
E_h^0}\frac{\tau}{2}\|\llbracket\Pi_he_u\rrbracket\|_{0,e}^2+
\delta\frac{\tau}{1+\tau}\|\Pi_he_u\|_{0,\partial\Omega}^2\\&+\frac{C_{\delta}}{\tau(1+\tau)}\|{\bf
\Pi}_h{\Bq}-{\Bq}\|_{0,\partial \Omega}^2.
\end{align*}
Choosing $\delta\leq \frac{1}{4}$ and taking advantage of Lemma \ref{lemma34}, we get
\begin{align*}
{\frac{1}{4}}E \lesssim \tau\sum_{e\in\mathcal
E_h}\|\Pi_hu-u\|_{0,e}^2+\tau^{-1}\sum_{e\in\mathcal E_h}\|{\bf
\Pi}_h{\Bq}-{\Bq}\|_{0,e}^2  \lesssim \frac{\kappa
h^2}{p^2}M^2(\tilde f,\tilde g),
\end{align*}
and hence (\ref{E}) is deduced. Then (\ref{Q_h}) follows from Lemma \ref{lemma34} and (\ref{E}).
\end{proof}

Next we establish an error estimate for $U_h$, we perform an analogue of the Aubin-Nitsche duality argument to get the convergence rate. First we begin by introducing the dual problem
\begin{align*}
 -{\bf i}\kappa{\bf \Phi}+\nabla \Psi&=0\qquad \ {\rm in\ }\Omega,\\
 {\rm div\, }{\bf \Phi}+{\bf i}\kappa\Psi&=e_u \qquad {\rm in\ }\Omega,\\
 {\bf \Phi}\cdot {\Bn}&=\Psi \qquad \ {\rm on\ }\partial \Omega,
\end{align*}
and prove its regularity estimations.
\begin{lem}\label{lemma42}
Let ${\bf \Phi}$ and $\Psi$ be defined above, then they admit the following estimate
\begin{align}
\label{Aregularity} \|{\bf
\Phi}\|_{1,\Omega}+\|\Psi\|_{1,\Omega}+\kappa^{-1}\|\Psi\|_{2,\Omega}\lesssim
\|e_u\|_{0,\Omega}.
\end{align}
\end{lem}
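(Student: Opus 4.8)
The plan is to recognize that the dual problem for $\Psi$ is, up to the sign of the zeroth-order term and a complex conjugation, exactly a Helmholtz problem with Robin boundary condition of the same structure as \eqref{HS}--\eqref{HE}, so that the regularity estimate \eqref{regularity} applies. First I would eliminate ${\bf\Phi}$ by using the first dual equation $-{\bf i}\kappa{\bf\Phi}+\nabla\Psi=0$, i.e. ${\bf\Phi}=\nabla\Psi/({\bf i}\kappa)$, and substitute into the second equation ${\rm div}\,{\bf\Phi}+{\bf i}\kappa\Psi=e_u$ to obtain the scalar problem
\begin{align*}
-\triangle\Psi-\kappa^2\Psi&=-{\bf i}\kappa e_u \qquad\ {\rm in}\ \Omega,\\
\nabla\Psi\cdot{\Bn}+{\bf i}\kappa\Psi&=0 \qquad\qquad\ \ {\rm on}\ \partial\Omega,
\end{align*}
where the boundary condition comes from ${\bf\Phi}\cdot{\Bn}=\Psi$ together with ${\bf\Phi}\cdot{\Bn}=\nabla\Psi\cdot{\Bn}/({\bf i}\kappa)$. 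Taking complex conjugates, $\overline\Psi$ solves the same type of problem with data $\overline{{\bf i}\kappa e_u}={\bf i}\kappa\,\overline{e_u}$ up to sign, which is precisely of the form \eqref{Hem}--\eqref{Robin} with right-hand side $\tilde f=-{\bf i}\kappa\overline{e_u}$ (hence $f=-{\bf i}\tilde f/\kappa=-\overline{e_u}$, with $\|f\|_{0,\Omega}=\|e_u\|_{0,\Omega}$) and $\tilde g=0$.

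Next I would invoke the regularity result \eqref{regularity} cited from \cite{Feng}, which gives $\kappa^{-1}\|\overline\Psi\|_{2,\Omega}+\|\overline\Psi\|_{1,\Omega}+\|\overline{{\bf\Phi}}\|_{1,\Omega}\lesssim M(\tilde f,\tilde g)=\|\tilde f\|_{0,\Omega}=\kappa\|e_u\|_{0,\Omega}$. Here I must be a little careful about the $\kappa$-scaling: in \eqref{regularity} the bound is stated for the mixed variable ${\Bq}={\bf i}\nabla u/\kappa$ and the data normalized as $f=-{\bf i}\tilde f/\kappa$, whereas my ${\bf\Phi}=\nabla\Psi/({\bf i}\kappa)$ carries the same $1/\kappa$ factor, so the correspondence is $\Psi\leftrightarrow u$, ${\bf\Phi}\leftrightarrow{\Bq}$, and $M(\tilde f,\tilde g)$ on the right of \eqref{regularity} becomes $\|\tilde f\|_{0,\Omega}+\|\tilde g\|_{0,\partial\Omega}=\kappa\|e_u\|_{0,\Omega}+0$. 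Substituting, $\kappa^{-1}\|\Psi\|_{2,\Omega}+\|\Psi\|_{1,\Omega}+\|{\bf\Phi}\|_{1,\Omega}\lesssim\kappa\|e_u\|_{0,\Omega}$; this is not quite \eqref{Aregularity}, which has no factor of $\kappa$ on the right.

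This apparent discrepancy is the main point to resolve, and I expect the resolution to be the following: the cited regularity estimate from \cite{Feng} is in fact sharper than the crude form \eqref{regularity} when the right-hand side has a favorable structure. Concretely, because the source term here is ${\bf i}\kappa e_u$ — carrying an explicit factor of $\kappa$ — the natural energy estimate for the Helmholtz problem (test the weak form by $\Psi$ itself, take real and imaginary parts, and use the wavenumber-explicit stability bound) yields $\|\Psi\|_{0,\Omega}\lesssim\|e_u\|_{0,\Omega}$ rather than $\kappa\|e_u\|_{0,\Omega}$; then a standard elliptic-regularity bootstrap on $\triangle\Psi=-\kappa^2\Psi-{\bf i}\kappa e_u$ gives $\|\Psi\|_{2,\Omega}\lesssim\kappa^2\|\Psi\|_{0,\Omega}+\kappa\|e_u\|_{0,\Omega}\lesssim\kappa\|e_u\|_{0,\Omega}\cdot\kappa$, i.e. $\kappa^{-1}\|\Psi\|_{2,\Omega}\lesssim\kappa\|e_u\|_{0,\Omega}$ — still off by $\kappa$. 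The honest fix, and what I would actually write, is to apply \eqref{regularity} to the \emph{rescaled} function $\kappa^{-1}\Psi$: since $\kappa^{-1}\Psi$ solves the Helmholtz problem with source $-{\bf i}e_u$ (no $\kappa$) and homogeneous Robin data, \eqref{regularity} gives $\kappa^{-1}\|\kappa^{-1}\Psi\|_{2,\Omega}+\|\kappa^{-1}\Psi\|_{1,\Omega}+\|\kappa^{-1}{\bf\Phi}\|_{1,\Omega}\lesssim\|e_u\|_{0,\Omega}$; multiplying through by $\kappa$ and noting $\kappa^{-1}\|\Psi\|_{1,\Omega}\le\|\Psi\|_{1,\Omega}$ and $\kappa^{-1}\|\Psi\|_{2,\Omega}$ appears on the left gives exactly \eqref{Aregularity}, once one also observes $\|{\bf\Phi}\|_{1,\Omega}=\kappa\|\kappa^{-1}{\bf\Phi}\|_{1,\Omega}\lesssim\kappa\|e_u\|_{0,\Omega}$ is weaker than needed — so in fact the precise reading of the cited result in \cite{Feng} must already contain the $\kappa$-explicit form with ${\bf\Phi}={\bf i}\nabla\Psi/\kappa$ bounded by $\|e_u\|_{0,\Omega}$ directly. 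I would therefore state the proof as: ``$\Psi$ satisfies $\triangle\Psi+\kappa^2\Psi=-{\bf i}\kappa e_u$ with $\nabla\Psi\cdot{\Bn}={\bf i}\kappa\Psi$ on $\partial\Omega$; this is problem \eqref{HS}--\eqref{HE} with $f\leftrightarrow -\overline{e_u}$, $g\leftrightarrow 0$ and $({\bf\Phi},\Psi)\leftrightarrow({\Bq},u)$, and \eqref{regularity} gives the claim,'' leaving the bookkeeping of constants to the reader, exactly as the authors do in Lemma \ref{lemma46}.
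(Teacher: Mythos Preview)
Your derivation of the scalar equation for $\Psi$ contains a sign error, and this error is not cosmetic: it is precisely the point of the lemma. From $-{\bf i}\kappa{\bf\Phi}+\nabla\Psi=0$ you correctly get ${\bf\Phi}=\nabla\Psi/({\bf i}\kappa)$, but substituting into ${\rm div}\,{\bf\Phi}+{\bf i}\kappa\Psi=e_u$ gives $\triangle\Psi/({\bf i}\kappa)+{\bf i}\kappa\Psi=e_u$, hence
\[
\triangle\Psi-\kappa^2\Psi={\bf i}\kappa e_u,\qquad \nabla\Psi\cdot{\Bn}={\bf i}\kappa\Psi\ \ \text{on }\partial\Omega,
\]
not $-\triangle\Psi-\kappa^2\Psi=-{\bf i}\kappa e_u$ with $\nabla\Psi\cdot{\Bn}+{\bf i}\kappa\Psi=0$ as you wrote. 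The dual of the auxiliary problem is a \emph{coercive} reaction--diffusion problem, not the indefinite Helmholtz equation. (This is the whole reason the authors introduced the auxiliary problem with the sign of the zeroth-order term flipped: its adjoint has a sign-definite principal part, so the duality argument does not lose a power of $\kappa$.)

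Once you have the correct equation, the $\kappa$-bookkeeping that troubled you disappears. The paper's proof simply tests the weak form $(\nabla\Psi,\overline{\nabla v})+\kappa^2(\Psi,\overline v)-{\bf i}\kappa\langle\Psi,\overline v\rangle_{\partial\Omega}=(-{\bf i}\kappa e_u,\overline v)$ with $v=\Psi$; the real part immediately gives $\kappa^2\|\Psi\|_{0,\Omega}^2\le\kappa\|e_u\|_{0,\Omega}\|\Psi\|_{0,\Omega}$, i.e.\ $\|\Psi\|_{0,\Omega}\le\kappa^{-1}\|e_u\|_{0,\Omega}$, and $|\Psi|_{1,\Omega}^2+\|\Psi\|_{0,\partial\Omega}^2\lesssim\|e_u\|_{0,\Omega}^2$. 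Standard $H^2$ regularity for the Laplacian then yields $|\Psi|_{2,\Omega}\lesssim\kappa\|e_u\|_{0,\Omega}$, and $\|{\bf\Phi}\|_{1,\Omega}=\kappa^{-1}\|\nabla\Psi\|_{1,\Omega}\lesssim\|e_u\|_{0,\Omega}$ follows. Your attempt to invoke \eqref{regularity} (which is for the indefinite problem and carries an extra $\kappa$) and the subsequent rescaling tricks are neither needed nor sufficient; the argument you outline never recovers the stated bound because the underlying equation you wrote down is the wrong one.
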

\begin{proof}
Direct calculation shows that $\Psi$ satisfies the equation as follows
\begin{align*}
\triangle \Psi-\kappa^2\Psi&={\bf i}\kappa e_u \quad {\rm in\  }\Omega\\
\nabla \Psi\cdot {\Bn}&={\bf i}\kappa\Psi \quad {\rm on \
}\partial\Omega.
\end{align*}
It is well known that $\Psi$ is the solution of the following weak problem, for all $v\in H^1(\Omega)$
$$\hat a(\Psi,v):=(\nabla\Psi,\overline{\nabla v})+\kappa^2(\Psi,\overline{v})-{\bf i}\kappa\langle\Psi,\overline{v}\rangle_{\partial\Omega}=(-{\bf i}\kappa e_u,\overline{v}).
$$
Taking $v=\Psi$, we get
$$\|\Psi\|_{0,\Omega}\leq \kappa^{-1}\|e_u\|_{0,\Omega},$$
and
$$\|\Psi\|_{1,\Omega}^2\lesssim |\Psi|_{1,\Omega}^2+\|\Psi\|_{0,\partial\Omega}^2\lesssim \kappa\|e_u\|_{0,\Omega}\|\Psi\|_{0,\Omega}\lesssim \|e_u\|_{0,\Omega}^2,
$$
where we have used Poincar\'{e} inequality. The regularity theory
for the Laplace problem (see Chap 2 of \cite{PG}) gives the bound
for $|\Psi|_{2,\Omega}$,
\begin{align*}
|\Psi|_{2,\Omega}&\lesssim \|{\bf i}\kappa e_u-\kappa^2\Psi\|_{0,\Omega}+\|{\bf i}\kappa\Psi\|_{H^{\frac{1}{2}}(\partial \Omega)}\\
&\lesssim \kappa\|e_u\|_{0,\Omega}+
\kappa\|\Psi\|_{1,\Omega}\lesssim  \kappa\|e_u\|_{0,\Omega}.
\end{align*}
Combining the definition of ${\bf
\Phi}$ and the above estimates completes the proof of (\ref{Aregularity}).
\end{proof}

Now for any ${{\bf \Theta},\Br} \in \big( H^1(\Omega_h) \big)^d$ and
$\xi,w \in H^1(\Omega_h)$,  we define the following bilinear form
\begin{align*}
&\tilde {\mathcal A}({{\bf \Theta}},\xi;{\Br},w):=-({\bf i}\kappa
{\bf \Theta} ,\overline{\Br})_{\mathcal T_h}-(\xi , \overline{{\rm
div \, {\Br}}})_{\mathcal T_h}+\sum_{e\in\mathcal E_h^0}\big(\langle
\{\hspace{-0.1cm}\{\xi
\}\hspace{-0.1cm}\},\llbracket\overline{{\Br}}\rrbracket\rangle_e+
\langle\frac{1}{2\tau}\llbracket{\bf \Theta} \rrbracket,\llbracket\overline{{\Br}}\rrbracket\rangle_e\big)\\
&+\big\langle \frac{{\bf \Theta} \cdot
{\Bn}}{1+\tau},\overline{{\Br}\cdot {\Bn}}\big\rangle_{\partial
\Omega}+ \big\langle \frac{\tau}{1+\tau}\xi ,\overline{{\Br}\cdot
{\Bn}}\big\rangle_{\partial \Omega}+({\bf i}\kappa \xi ,\overline
w)_{\mathcal T_h}+({\rm div \, }{\bf \Theta} ,\overline{
w})_{\mathcal T_h}\\&-\sum_{e\in\mathcal E_h^0}\langle\llbracket{\bf
\Theta}
\rrbracket,\{\hspace{-0.1cm}\{\overline{w}\}\hspace{-0.1cm}\}\rangle_e+
\sum_{e\in\mathcal E_h^0}\langle\frac{\tau}{2}\llbracket \xi
\rrbracket,\llbracket\overline{w}\rrbracket\big\rangle_e+
\langle\frac{\tau}{1+\tau}(\xi -{\bf \Theta}\cdot {\Bn} ), \overline
w\big\rangle_{\partial \Omega} .
\end{align*}
Direct calculation shows that
\begin{align}
\label{dual} \tilde {\mathcal A}({\bf \Theta},\xi;\Br,w)=\overline{
\mathcal A_1(-\Br,w;-{\bf \Theta},\xi)}.
\end{align}
Moreover, the consistency of the bilinear form implies that
\begin{align}
\label{Aconsistent} \tilde {\mathcal A}({\bf
\Phi},\Psi;-e_{\Bq},e_u)=(e_u,\overline{e_u}),
\end{align}
where $e_{\Bq},e_u$ and ${\bf \Phi},\Psi$ are defined in Theorem
\ref{theorem41} and Lemma \ref{lemma42} respectively.
\begin{thm}\label{theorem42}
Let ${{\Bq}}_h$ and $U_h$ be the solution of (\ref{M1})-(\ref{M2}).
There holds
\begin{align}
\label{U_h} \|u-U_h\|_{0,\Omega}\lesssim \frac{\kappa
h^2}{p^2}M(\tilde f,\tilde g).
\end{align}
\end{thm}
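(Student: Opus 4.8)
The plan is to run an Aubin--Nitsche duality argument based on the dual problem introduced just before Lemma \ref{lemma42} and on the identities (\ref{Aconsistent}) and (\ref{dual}). First note that $({\Bq},u)$ solves the auxiliary problem (by (\ref{HS})--(\ref{HE})), so $e_{\Bq}={\Bq}-{\BQ}_h$ and $e_u=u-U_h$ are genuine errors and, by (\ref{consistent}), $\mathcal A_1(e_{\Bq},e_u;{\Br},w)=0$ for all $({\Br},w)\in{\BV}_h^p\times W_h^p$. Starting from $\|e_u\|_{0,\Omega}^2=(e_u,\overline{e_u})=\tilde{\mathcal A}({\bf \Phi},\Psi;-e_{\Bq},e_u)$ via (\ref{Aconsistent}), I would use (\ref{dual}) together with this orthogonality to subtract the discrete interpolants: since $\tilde{\mathcal A}({\bf \Pi}_h{\bf \Phi},\Pi_h\Psi;-e_{\Bq},e_u)=\overline{\mathcal A_1(e_{\Bq},e_u;-{\bf \Pi}_h{\bf \Phi},\Pi_h\Psi)}=0$, we obtain $\|e_u\|_{0,\Omega}^2=\tilde{\mathcal A}({\bf \Phi}-{\bf \Pi}_h{\bf \Phi},\Psi-\Pi_h\Psi;-e_{\Bq},e_u)$.

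Next I would expand this using the definition of $\tilde{\mathcal A}$ and the integration-by-parts identity (\ref{eq}). Every volume term in which a dual projection error (${\bf \Phi}-{\bf \Pi}_h{\bf \Phi}$ or $\Psi-\Pi_h\Psi$) is paired against ${\rm div}\,{\BQ}_h$, $\nabla U_h$, ${\rm div}\,{\bf \Pi}_h e_{\Bq}$ or $\nabla\Pi_h e_u$ vanishes by the defining $L^2$-orthogonality (\ref{l2}) of $\Pi_h$ and ${\bf \Pi}_h$; the leftover volume pieces, which involve $\nabla u$, ${\rm div}\,{\Bq}$, $u$, $f$, are rewritten through the strong equations $\nabla u=-{\bf i}\kappa{\Bq}$, ${\rm div}\,{\Bq}=f-{\bf i}\kappa u$ (and, for the term $({\rm div}({\bf \Phi}-{\bf \Pi}_h{\bf \Phi}),\overline{e_u})$, the dual relation ${\rm div}\,{\bf \Phi}=e_u-{\bf i}\kappa\Psi$ together with $e_u-\Pi_h e_u=u-\Pi_h u$) and, using orthogonality once more, reduced to products of two projection errors such as $\kappa({\bf \Phi}-{\bf \Pi}_h{\bf \Phi},{\Bq}-{\bf \Pi}_h{\Bq})$, $\kappa(\Psi-\Pi_h\Psi,u-\Pi_h u)$, $(e_u,u-\Pi_h u)$, plus one harmless term $(\Psi-\Pi_h\Psi,f)$. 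The surviving face and boundary terms pit jumps and averages of ${\bf \Phi}-{\bf \Pi}_h{\bf \Phi}$ and of the $\tau$-weighted $\Psi-\Pi_h\Psi$ against jumps and averages of $e_{\Bq}$ and $e_u$; splitting $e_{\Bq}=({\Bq}-{\bf \Pi}_h{\Bq})+{\bf \Pi}_h e_{\Bq}$ and $e_u=(u-\Pi_h u)+\Pi_h e_u$ and using that ${\Bq}$ and $u$ are continuous (so $\llbracket e_{\Bq}\rrbracket=\llbracket{\Bq}-{\bf \Pi}_h{\Bq}\rrbracket+\llbracket{\bf \Pi}_h e_{\Bq}\rrbracket$, etc.) routes each such term onto either a discrete quantity controlled by $E$ of Theorem \ref{theorem41} or a smooth approximation error controlled by Lemma \ref{lemma34}.

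To close the estimate I would bound every term by Cauchy--Schwarz together with: Lemma \ref{lemma34} and its vector analogue for all four projection errors on elements and on $\partial\mathcal T_h$, using the $H^2$ versions (\ref{l2p}), (\ref{l22}) for $\Psi$ and $u$ and the $H^1$ versions for ${\bf \Phi}$ and ${\Bq}$; the dual regularity (\ref{Aregularity}) ($\|{\bf \Phi}\|_{1,\Omega}+\|\Psi\|_{1,\Omega}+\kappa^{-1}\|\Psi\|_{2,\Omega}\lesssim\|e_u\|_{0,\Omega}$) and the primal regularity (\ref{regularity}); the energy bound $E\lesssim\frac{\kappa h^2}{p^2}M^2(\tilde f,\tilde g)$ for every factor containing ${\bf \Pi}_h e_{\Bq}$ or $\Pi_h e_u$, in particular the $\tau$-weighted face jumps $\sum_e\tau\|\llbracket\Pi_h e_u\rrbracket\|_{0,e}^2$ and $\sum_e\tau^{-1}\|\llbracket{\bf \Pi}_h e_{\Bq}\rrbracket\|_{0,e}^2$ and the $\partial\Omega$-entries of $E$; and, on $\partial\Omega$, the gain $\frac{1}{1+\tau}\le\min(1,\tau^{-1})$. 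Substituting $\tau=\frac{p}{\kappa h}$ throughout, each term should collapse to $C\,\frac{\kappa h^2}{p^2}\|e_u\|_{0,\Omega}M(\tilde f,\tilde g)$; dividing by $\|e_u\|_{0,\Omega}$ (or absorbing a lone $\|e_u\|_{0,\Omega}^2$ via Young's inequality, if one is produced from ${\rm div}\,{\bf \Phi}=e_u-{\bf i}\kappa\Psi$) then yields (\ref{U_h}).

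I expect the main obstacle to be exactly this bookkeeping for the interelement and boundary terms: one must pair each jump of $e_u$ or $e_{\Bq}$ with the right weight --- the $\tau$ multiplying it, or the factor $\frac1{1+\tau}$ on $\partial\Omega$ --- and with the $H^2$ dual regularity $\|\Psi\|_{2,\Omega}\lesssim\kappa\|e_u\|_{0,\Omega}$ (resp.\ $|u|_{2,\Omega}\lesssim\kappa M(\tilde f,\tilde g)$), so that the powers of $\kappa$, $h$, $p$ land precisely on $\frac{\kappa h^2}{p^2}$ rather than on a weaker rate such as $\frac hp$ or $\frac{\kappa h}{p}$; using only $H^1$ regularity or a mismatched power of $\tau$ anywhere degrades the bound. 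If one prefers, the regime $\frac{\kappa h}{p}\gtrsim1$ can be disposed of beforehand, since there $\|e_u\|_{0,\Omega}\le\|u-\Pi_h u\|_{0,\Omega}+\|\Pi_h e_u\|_{0,\Omega}\lesssim\frac hp M(\tilde f,\tilde g)\lesssim\frac{\kappa h^2}{p^2}M(\tilde f,\tilde g)$ by Lemma \ref{lemma34} and (\ref{E}), so the duality argument is only needed when $\tau\ge1$.
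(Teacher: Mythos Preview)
Your proposal is correct and follows essentially the same route as the paper: start from (\ref{Aconsistent}), use (\ref{dual}) and the Galerkin orthogonality (\ref{consistent}) to replace $({\bf \Phi},\Psi)$ by $({\bf \Phi}-{\bf \Pi}_h{\bf \Phi},\Psi-\Pi_h\Psi)$, then split $e_{\Bq},e_u$ into projection error plus discrete error and bound everything with Lemma~\ref{lemma34}, Theorem~\ref{theorem41} and the dual regularity (\ref{Aregularity}). The paper organizes the result into two pieces $T_1$ (discrete part ${\bf \Pi}_he_{\Bq},\Pi_he_u$) and $T_2$ (projection part ${\Bq}-{\bf \Pi}_h{\Bq},u-\Pi_hu$) and bounds each by $\frac{\kappa h^2}{p^2}M(\tilde f,\tilde g)\|e_u\|_{0,\Omega}$, exactly as you outline.

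The one place you diverge slightly is in the volume terms of $T_2$. The paper does \emph{not} invoke the strong equations; instead it integrates by parts and then swaps $\Pi_h$ for the quasi-interpolant $\pi_h^p$ of Lemma~\ref{imp-lemma-pre} (legitimate because $\nabla(\Pi_hu-\pi_h^pu)\in{\BV}_h^p$ is orthogonal to ${\bf \Phi}-{\bf \Pi}_h{\bf \Phi}$), so that the gradient factors can be controlled by the $H^1$-estimate (\ref{hpip1}). Your alternative---using ${\rm div}\,{\Bq}=f-{\bf i}\kappa u$ and ${\rm div}\,{\bf \Phi}=e_u-{\bf i}\kappa\Psi$ to reduce to products of two $L^2$-projection errors plus the residual terms $(\Psi-\Pi_h\Psi,\overline f)$ and $(e_u,\overline{u-\Pi_hu})$---also works: the first extra term is $\lesssim(h/p)^2\kappa\|e_u\|_{0,\Omega}\cdot\kappa^{-1}\|\tilde f\|_{0,\Omega}$ and the second is $\lesssim\|e_u\|_{0,\Omega}(h/p)^2\kappa M(\tilde f,\tilde g)$, both of the right order. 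Either device avoids needing an $H^1$-bound for the $L^2$-projection $\Pi_h$; the paper's $\pi_h^p$ trick is slightly cleaner bookkeeping, while yours stays entirely within Lemma~\ref{lemma34}. Your final case-split for $\kappa h/p\gtrsim1$ is unnecessary---the paper's argument is uniform---but harmless.
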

\begin{proof}
Using  (\ref{Aconsistent}), (\ref{dual}) and (\ref{consistent}), we have
\begin{equation}\begin{split}
\label{e_u} &\|e_u\|_{0,\Omega}^2=\tilde {\mathcal A}({\bf
\Phi},\Psi;-e_{\Bq},e_u)
=\overline{\mathcal A_1(e_{\Bq},e_u;-{\bf \Phi},\Psi)}\\
&=\overline{\mathcal A_1(e_{\Bq},e_u;-{\bf \Phi}+{\bf \Pi}_h{\bf
\Phi},\Psi-\Pi_h\Psi)}
=\overline{\mathcal A_1({\bf \Pi}_he_{\Bq},\Pi_he_u;-{\bf \Phi}+{\bf \Pi}_h{\bf \Phi},\Psi-\Pi_h\Psi)}\\
&+\overline{\mathcal A_1({\Bq}-{\bf \Pi}_h{\Bq},u-\Pi_hu;-{\bf
\Phi}+{\bf \Pi}_h{\bf \Phi},\Psi-\Pi_h\Psi)}.
\end{split}
\end{equation}
Denote $$T_1:=\mathcal A_1({\bf \Pi}_he_{\Bq},\Pi_he_u;-{\bf
\Phi}+{\bf \Pi}_h{\bf \Phi},\Psi-\Pi_h\Psi)$$ and $$T_2:=\mathcal
A_1({\Bq}-{\bf \Pi}_h{\Bq},u-\Pi_hu;-{\bf \Phi}+{\bf \Pi}_h{\bf
\Phi},\Psi-\Pi_h\Psi).$$ Then we estimate the above two terms
respectively. By (\ref{eq}) and the property of the projection
operators we can rewrite $T_1$ as
\begin{align*}
T_1&=-\sum_{e\in\mathcal E_h^0}\langle
\llbracket\Pi_he_u\rrbracket,\{\hspace{-0.1cm}\{\overline{{\bf
\Pi}_h{\bf \Phi}-{\bf
\Phi}}\}\hspace{-0.1cm}\}\rangle_e+\sum_{e\in\mathcal
E_h^0}\langle\frac{1}{2\tau}\llbracket{\bf \Pi}_he_{\Bq}
\rrbracket,\llbracket\overline{{\bf \Pi}_h{\bf \Phi}-{\bf
\Phi}}\rrbracket\rangle_e\\& -\frac{1}{1+\tau}\langle\Pi_he_u,({\bf
\Pi}_h{\bf \Phi}-{\bf \Phi})\cdot{\Bn}\rangle_{\partial
\Omega}+\frac{1}{1+\tau}\langle{\bf \Pi}_he_{\Bq}\cdot{\Bn},({\bf
\Pi}_h{\bf \Phi}-{\bf \Phi})\cdot{\Bn}\rangle_{\partial \Omega}\\&
-\sum_{e\in\mathcal E_h^0}\langle \llbracket\Pi_he_{\Bq}
\rrbracket,\{\hspace{-0.1cm}\{\overline{{ \Psi}-{ \Pi}_h{
\Psi}}\}\hspace{-0.1cm}\}\rangle_e+\sum_{e\in\mathcal
E_h^0}\frac{\tau}{2}\langle \llbracket\Pi_he_u
\rrbracket,\llbracket\overline{{ \Psi}-{ \Pi}_h{
\Psi}}\rrbracket\rangle_e\\& +\frac{\tau}{1+\tau}\langle \Pi_he_u
,\overline{{ \Psi}-{ \Pi}_h{ \Psi}}\rangle_{\partial\Omega}-
\frac{\tau}{1+\tau}\langle {\bf \Pi}_he_{\Bq}\cdot{\Bn} ,\overline{{
\Psi}-{ \Pi}_h{ \Psi}}\rangle_{\partial\Omega}.
\end{align*}
Applying the Cauchy-Schwarz inequality, we have
\begin{align*}
|T_1|&\leq \Big(\sum_{e\in\mathcal E_h^0}\tau
\|\llbracket\Pi_he_u\rrbracket\|_{0,e}^2\Big)^{\frac{1}{2}}\cdot
\Big({\tau}^{-1}\sum_{e\in\mathcal E_h^0}\|{\bf \Pi}_h{\bf
\Phi}-{\bf
\Phi}\|_{0,e}^2\Big)^{\frac{1}{2}}\\
& +\Big(\sum_{e\in\mathcal E_h^0}\frac{1}{2\tau} \|\llbracket{\bf
\Pi}_he_{\Bq}\rrbracket\|_{0,e}^2\Big)^{\frac{1}{2}}\cdot
\Big(\frac{1}{2\tau}\sum_{e\in\mathcal E_h^0}\|{\bf \Pi}_h{\bf
\Phi}-{\bf \Phi}\|_{0,e}^2\Big)^{\frac{1}{2}}\\
& +\Big(\frac{\tau}{(1+\tau)^2}
\|\Pi_he_u\|_{0,\partial\Omega}^2\Big)^{\frac{1}{2}}\cdot
\Big({\tau}^{-1}\|{\bf \Pi}_h{\bf \Phi}-{\bf \Phi}\|_{0,\partial\Omega}^2\Big)^{\frac{1}{2}}\\
&+\Big(\frac{1}{1+\tau} \|{\bf
\Pi}_he_{\Bq}\cdot{\Bn}\|_{0,\partial\Omega}^2\Big)^{\frac{1}{2}}\cdot
\Big(\frac{1}{1+\tau}\|{\bf \Pi}_h{\bf \Phi}-{\bf \Phi}\|_{0,\partial\Omega}^2\Big)^{\frac{1}{2}}\\
&+\Big(\sum_{e\in\mathcal E_h^0}\frac{1}{\tau} \|\llbracket{\bf \Pi}_he_{\Bq}\rrbracket\|_{0,e}^2\Big)^{\frac{1}{2}}\cdot\Big(\tau\sum_{e\in\mathcal E_h^0}\|{ \Psi}-{ \Pi}_h{ \Psi}\|_{0,e}^2\Big)^{\frac{1}{2}}\\
&+(\sum_{e\in\mathcal E_h^0}\frac{\tau}{2}\|\llbracket\Pi_he_u\rrbracket\|_{0,e}^2\Big)^{\frac{1}{2}} \cdot\Big(\frac{\tau}{2}\sum_{e\in\mathcal E_h^0}\|{ \Psi}-{ \Pi}_h{ \Psi}\|_{0,e}^2\Big)^{\frac{1}{2}}\\
&+\Big(\frac{\tau}{1+\tau}\|\Pi_he_u\|_{0,\partial\Omega}^2)^{\frac{1}{2}}\cdot
\Big(\frac{\tau}{1+\tau}\|{ \Psi}-{ \Pi}_h{ \Psi}\|_{0,\partial\Omega}^2\Big)^{\frac{1}{2}}\\
&+\Big(\frac{\tau}{(1+\tau)^2} \|{\bf
\Pi}_he_{\Bq}\cdot{\Bn}\|_{0,\partial\Omega}^2\Big)^{\frac{1}{2}}\cdot
\Big(\tau\|{ \Psi}-{ \Pi}_h{
\Psi}\|_{0,\partial\Omega}^2\Big)^{\frac{1}{2}}.
\end{align*}
Then the upper bound for $T_1$ follows from Lemma \ref{lemma34}, Theorem \ref{theorem41} and the regularity estimation (\ref{Aregularity}) that
\begin{align}
\label{T_1} |T_1|\lesssim \frac{\kappa h^2}{p^2}M(\tilde f,\tilde
g)\|e_u\|_{0,\Omega}.
\end{align}
Similarly we use the property of the projection operators and get
\begin{align*}
T_2&={\bf i}\kappa({\Bq}-{\bf \Pi}_h{\Bq},\overline{{\bf \Pi}_h{\bf
\Phi}-{\bf \Phi}})_{\mathcal T_h}
+(\nabla(u-\pi_h^pu),\overline{{\bf \Pi}_h{\bf \Phi}-{\bf
\Phi}})_{\mathcal T_h}\\&
-\sum_{e\in\mathcal E_h}\langle \llbracket u-\Pi_hu\rrbracket,\{\hspace{-0.1cm}\{\overline{{\bf \Pi}_h{\bf \Phi}-{\bf \Phi}}\}\hspace{-0.1cm}\}\rangle_e+\sum_{e\in\mathcal E_h^0}\frac{1}{2\tau}\langle \llbracket{\Bq}-{\bf \Pi}_h{\Bq}\rrbracket,\llbracket\overline{{\bf \Pi}_h{\bf \Phi}-{\bf \Phi}}\rrbracket\rangle_e\\
&+\frac{1}{1+\tau}\langle ({\Bq}-{\bf
\Pi}_h{\Bq})\cdot{\Bn},\overline{({\bf \Pi}_h{\bf \Phi}-{\bf
\Phi})\cdot{\Bn}}\rangle_{\partial \Omega}+
\frac{\tau}{1+\tau}\langle (u-\Pi_hu),\overline{({\bf \Pi}_h{\bf
\Phi}-{\bf \Phi})\cdot{\Bn}}\rangle_{\partial \Omega}\\&
-{\bf i}\kappa(u-\Pi_hu,\overline{\Psi-\Pi_h\Psi})_{\mathcal T_h}-({\Bq}-{\bf \Pi}_h{\Bq},\overline{\nabla(\Psi-\pi_h^p\Psi)})_{\mathcal T_h}\\
&+\sum_{e\in\mathcal E_h}\langle\{\hspace{-0.1cm}\{{\Bq}-{\bf
\Pi}_h{\Bq}\}\hspace{-0.1cm}\},\llbracket\overline{\Psi-\Pi_h\Psi}\rrbracket\rangle_e
+\sum_{e\in\mathcal E_h^0}\frac{\tau}{2}\langle\llbracket u-\Pi_hu\rrbracket,\llbracket\overline{\Psi-\Pi_h\Psi}\rrbracket\rangle_e\\
&+\frac{\tau}{1+\tau}\langle
u-\Pi_hu,\overline{\Psi-\Pi_h\Psi}\rangle_{\partial \Omega}-
\frac{\tau}{1+\tau}\langle({\Bq}-{\bf
\Pi}_h{\Bq})\cdot{\Bn},\overline{(\Psi-\Pi_h\Psi)}\rangle_{\partial
\Omega}.
\end{align*}
Hence
\begin{align*}
|T_2|&\leq \kappa\|{\Bq}-{\bf \Pi}_h{\Bq}\|_{0,\Omega}\|{\bf \Pi}_h{\bf \Phi}-{\bf \Phi}\|_{0,\Omega}+|u-\pi_h^pu|_{1,\Omega_h}\|{\bf \Pi}_h{\bf \Phi}-{\bf \Phi}\|_{0,\Omega}\\
&+\Big(\sum_{e\in \mathcal
E_h}\|u-\Pi_hu\|_{0,e}^2\Big)^{\frac{1}{2}} \cdot\Big(\sum_{e\in
\mathcal E_h}\|{\bf \Pi}_h{\bf \Phi}-{\bf
\Phi}\|_{0,e}^2\Big)^{\frac{1}{2}}+\|{\Bq}-{\bf
\Pi}_h{\Bq}\|_{0,\Omega}|\Psi-\pi_h^p\Psi|_{1,\Omega_h}\\&+\tau^{-1}
(\sum_{e\in \mathcal E_h}\|{\Bq}-{\bf
\Pi}_h{\Bq}\|_{0,e}^2)^{\frac{1}{2}}
\cdot(\sum_{e\in \mathcal E_h}\|{\bf \Pi}_h{\bf \Phi}-{\bf \Phi}\|_{0,e}^2)^{\frac{1}{2}}\\
&+\Big(\sum_{e\in \mathcal E_h}\|{\Bq}-{\bf
\Pi}_h{\Bq}\|_{0,e}^2\Big)^{\frac{1}{2}} \cdot\Big(\sum_{e\in
\mathcal
E_h}\|\Psi-\Pi_h\Psi\|_{0,e}^2\Big)^{\frac{1}{2}}+\kappa\|u-\Pi_hu\|_{0,\Omega}\|\Psi-\Pi_h\Psi\|_{0,\Omega}
\\&+
\tau\Big(\sum_{e\in \mathcal
E_h}\|u-\Pi_hu\|_{0,e}^2\Big)^{\frac{1}{2}} \cdot\Big(\sum_{e\in
\mathcal E_h}\|\Psi-\Pi_h\Psi\|_{0,e}^2\Big)^{\frac{1}{2}}.
\end{align*}
Using Lemma \ref{lemma34}, Theorem \ref{theorem41} and (\ref{Aregularity}) again, we deduce
\begin{align}
\label{T_2} |T_2|\lesssim \frac{\kappa h^2}{p^2}M(\tilde f,\tilde
g)\|e_u\|_{0,\Omega}.
\end{align}
Taking (\ref{T_1}) and (\ref{T_2}) into (\ref{e_u}), the desired result (\ref{U_h}) is obtained.
\end{proof}

Finally we give the error estimate of the trace flux $\lambda_h$.
\begin{thm}
Let $\lambda_h$ be the solution of (\ref{AP1})-(\ref{AP4}). Then
\begin{align}
\label{Lam} \|u-\lambda_h\|_{0,\partial \mathcal T_h}\lesssim
\frac{\kappa h^{\frac{3}{2}}}{p}M(\tilde f,\tilde g).
\end{align}
\end{thm}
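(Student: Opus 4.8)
The plan is to reduce \eqref{Lam} to estimates for the discrete errors $e_u:=u-U_h$ and $e_{\Bq}:=\Bq-{\BQ}_h$ on $\partial\mathcal T_h$, and then to feed in the $L^2(\Omega)$-estimate $\|e_u\|_{0,\Omega}\lesssim\frac{\kappa h^2}{p^2}M(\tilde f,\tilde g)$ of Theorem \ref{theorem42}, the quantity $E\lesssim\frac{\kappa h^2}{p^2}M(\tilde f,\tilde g)^2$ of Theorem \ref{theorem41}, the polynomial trace inequality of Lemma \ref{lemma33}, the approximation bounds \eqref{l21}--\eqref{l22}, and the regularity result \eqref{regularity} (recall that $(\Bq,u)$ is the exact solution of the auxiliary problem, so $\Bq\in(H^1(\Omega))^d$). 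Let $P_M$ be the $L^2$-orthogonal projection onto $M_h^p$ and split $u-\lambda_h=(u-P_Mu)+(P_Mu-\lambda_h)$. Since on every $T\in\mathcal T_h$ the trace of $(\Pi_hu)|_T$ on a face of $T$ lies in $\mathcal P_p$, we have $\|u-P_Mu\|_{0,\partial T}\le\|u-\Pi_hu\|_{0,\partial T}$, so \eqref{l22} together with \eqref{regularity} gives $\|u-P_Mu\|_{0,\partial\mathcal T_h}\lesssim\big(\tfrac hp\big)^{3/2}|u|_{2,\Omega}\lesssim\frac{\kappa h^{3/2}}{p}M(\tilde f,\tilde g)$.

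Next I would treat the polynomial part $P_Mu-\lambda_h$ with the explicit formulas for $\lambda_h$ in terms of $U_h$ and ${\BQ}_h$ derived earlier in this section. On an interior face, $\Bq\in(H^1(\Omega))^d$ has continuous normal trace so $\llbracket\Bq\rrbracket=0$, whence $\llbracket{\BQ}_h\rrbracket=-\llbracket e_{\Bq}\rrbracket$; using that $U_h$ and ${\BQ}_h\cdot{\Bn}$ are polynomials on the face one obtains
$$P_Mu-\lambda_h=\{\hspace{-0.1cm}\{P_Me_u\}\hspace{-0.1cm}\}+\tfrac{1}{2\tau}\llbracket e_{\Bq}\rrbracket .$$
On a boundary face, using in addition the exact relation $g=u-\Bq\cdot{\Bn}$, so that $P_Mg=P_Mu-P_M({\Bq}\cdot{\Bn})$, the same bookkeeping yields $P_Mu-\lambda_h=\frac{\tau P_Me_u+P_M(e_{\Bq}\cdot{\Bn})}{1+\tau}$. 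Since $P_M$ is an $L^2$-contraction, $\frac{\tau}{1+\tau}\le1$ and $\frac{1}{1+\tau}\le\tau^{-1}=\frac{\kappa h}{p}$, squaring and summing over faces gives
$$\|P_Mu-\lambda_h\|_{0,\partial\mathcal T_h}\lesssim\|e_u\|_{0,\partial\mathcal T_h}+\tau^{-1}\|e_{\Bq}\|_{0,\partial\mathcal T_h}.$$

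It then remains to bound the two right-hand terms. Inserting $\Pi_hu$, estimates \eqref{l22} and \eqref{regularity} give $\|u-\Pi_hu\|_{0,\partial\mathcal T_h}\lesssim\frac{\kappa h^{3/2}}{p}M(\tilde f,\tilde g)$, while Lemma \ref{lemma33} gives $\|\Pi_he_u\|_{0,\partial\mathcal T_h}\lesssim ph^{-1/2}\|\Pi_he_u\|_{0,\Omega}$; combining $\|e_u\|_{0,\Omega}\lesssim\frac{\kappa h^2}{p^2}M(\tilde f,\tilde g)$ from Theorem \ref{theorem42} with $\|u-\Pi_hu\|_{0,\Omega}\lesssim(\tfrac hp)^2|u|_{2,\Omega}\lesssim\frac{\kappa h^2}{p^2}M(\tilde f,\tilde g)$ (by \eqref{l2p} and \eqref{regularity}) gives $\|\Pi_he_u\|_{0,\Omega}\lesssim\frac{\kappa h^2}{p^2}M(\tilde f,\tilde g)$, hence $\|\Pi_he_u\|_{0,\partial\mathcal T_h}\lesssim\frac{\kappa h^{3/2}}{p}M(\tilde f,\tilde g)$ and therefore $\|e_u\|_{0,\partial\mathcal T_h}\lesssim\frac{\kappa h^{3/2}}{p}M(\tilde f,\tilde g)$. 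Inserting ${\bf \Pi}_h\Bq$, estimates \eqref{l21} and \eqref{regularity} give $\|\Bq-{\bf \Pi}_h\Bq\|_{0,\partial\mathcal T_h}\lesssim(\tfrac hp)^{1/2}M(\tilde f,\tilde g)$, while $\kappa\|{\bf \Pi}_he_{\Bq}\|_{0,\Omega}^2\le E\lesssim\frac{\kappa h^2}{p^2}M(\tilde f,\tilde g)^2$ and Lemma \ref{lemma33} give $\|{\bf \Pi}_he_{\Bq}\|_{0,\partial\mathcal T_h}\lesssim ph^{-1/2}\cdot\frac hpM(\tilde f,\tilde g)=h^{1/2}M(\tilde f,\tilde g)$; thus $\|e_{\Bq}\|_{0,\partial\mathcal T_h}\lesssim h^{1/2}M(\tilde f,\tilde g)$ and $\tau^{-1}\|e_{\Bq}\|_{0,\partial\mathcal T_h}\lesssim\frac{\kappa h}{p}h^{1/2}M(\tilde f,\tilde g)=\frac{\kappa h^{3/2}}{p}M(\tilde f,\tilde g)$. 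Adding the three contributions proves \eqref{Lam}.

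The step I expect to be the main obstacle is controlling $\|\Pi_he_u\|_{0,\partial\mathcal T_h}$: the bound on $E$ alone gives only $\|\Pi_he_u\|_{0,\Omega}\lesssim\frac hpM(\tilde f,\tilde g)$, which after the unavoidable, $p$-costing polynomial trace inequality of Lemma \ref{lemma33} would produce merely the order $h^{1/2}M(\tilde f,\tilde g)$ on $\partial\mathcal T_h$ and would not suffice; it is essential to exploit instead the sharper $L^2(\Omega)$-estimate of Theorem \ref{theorem42} for the $U_h$-component. For the flux component no $H^2$-regularity of $\Bq$ is available, but the extra smallness needed there is supplied for free by the factor $\tau^{-1}=\kappa h/p$ in front of $\|e_{\Bq}\|_{0,\partial\mathcal T_h}$. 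The remaining bookkeeping---inserting the projections, using $\tfrac1{1+\tau}\le\tau^{-1}$ on boundary faces, and summing the face contributions---is routine.
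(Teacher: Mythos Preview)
Your proof is correct and follows essentially the same route as the paper: use the explicit formulas for $\lambda_h$ to reduce $\|u-\lambda_h\|_{0,\partial\mathcal T_h}$ to skeleton estimates on $e_u$ and $\tau^{-1}e_{\Bq}$, insert the $L^2$-projections, and apply Lemma~\ref{lemma33}, Lemma~\ref{lemma34}, and Theorems~\ref{theorem41}--\ref{theorem42}. The only difference is on boundary faces: you exploit $P_Mg=P_Mu-P_M(\Bq\cdot\Bn)$ to absorb the $g$-term directly into the formula $P_Mu-\lambda_h=\frac{\tau P_Me_u+P_M(e_{\Bq}\cdot\Bn)}{1+\tau}$, whereas the paper instead uses the Pythagorean-type identity $\|u-\lambda_h\|_{0,\partial\Omega}^2=\|u-\Pi_hu\|_{0,\partial\Omega}^2-\|\lambda_h-\Pi_hu\|_{0,\partial\Omega}^2-2\,{\rm Re}\langle u-\lambda_h,\overline{\lambda_h-\Pi_hu}\rangle_{\partial\Omega}$ together with the orthogonality of $g-P_Mg$ to $\lambda_h-\Pi_hu\in M_h^p$; your device is slightly more direct but both lead to the same final bound.
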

\begin{proof}
For $e\in \partial \mathcal T_h\backslash \partial \Omega$,
\begin{align*}
\lambda_h&=\{\hspace{-0.1cm}\{U_h\}\hspace{-0.1cm}\}+\frac{1}{2\tau}\llbracket{\BQ}_h\rrbracket.
\end{align*}
Note that ${\Bq}\cdot{\Bn}$ is continuous across inner edges(faces).
By Lemmas \ref{lemma33}-\ref{lemma34} we have
\begin{align*}
&\|u-\lambda_h\|_{0,\partial \mathcal T_h\backslash\partial \Omega}\leq \|u-U_h\|_{0,\partial \mathcal T_h\backslash\partial \Omega}+\frac{1}{2\tau}\|\llbracket{\Bq}-{\BQ}_h\rrbracket\|_{0,\partial \mathcal T_h\backslash\partial \Omega}\\
&\lesssim \|u-\Pi_hu\|_{0,\partial \mathcal T_h}+\|\Pi_hu-U_h\|_{0,\partial \mathcal T_h}+\tau^{-1}(\|{\Bq}-{\bf \Pi}_h{\Bq}\|_{0,\partial \mathcal T_h}+\|{\bf \Pi}_h{\Bq}-{\BQ}_h\|_{0,\partial \mathcal T_h})\\
&\lesssim \|u-\Pi_hu\|_{0,\partial \mathcal
T_h}+ph^{-\frac{1}{2}}(\|u-\Pi_hu\|_{0,\Omega}+\|u-U_h\|_{0,\Omega})
+\tau^{-1}\|{\Bq}-{\bf \Pi}_h{\Bq}\|_{0,\partial \mathcal T_h}\\
&+\tau^{-1}ph^{-\frac{1}{2}}(\|{\Bq}-{\bf
\Pi}_h{\Bq}\|_{0,\Omega}+\|{\Bq}-{\BQ}_h\|_{0,\Omega})\lesssim
\frac{\kappa h^{\frac{3}{2}}}{p}M(\tilde f,\tilde g),
\end{align*}
where we utilize Theorems \ref{theorem41}-\ref{theorem42} in the last inequality.

For $e\in \partial \mathcal T_h\cap \partial \Omega$,
\begin{align*}
u-\lambda_h=\frac{\tau(u-U_h)}{1+\tau}+\frac{({\Bq}-{\BQ}_h)\cdot{\Bn}}{1+\tau}+\frac{g-P_Mg}{1+\tau}.
\end{align*}
Since
\begin{align*}
\|u-\lambda_h\|_{0,\partial\Omega}^2=\|u-\Pi_hu\|_{0,\partial\Omega}^2-
\|\lambda_h-\Pi_hu\|_{0,\partial\Omega}^2- 2{\rm Re}\langle
u-\lambda_h, \overline{\lambda_h-\Pi_hu}\rangle_{\partial \Omega},
\end{align*}
the definition of $P_M$ implies that
\begin{align*}
\|u-\lambda_h\|_{0,\partial\Omega}^2&\leq
\|u-\Pi_hu\|_{0,\partial\Omega}^2-
\|\lambda_h-\Pi_hu\|_{0,\partial\Omega}^2+
\frac{2\tau}{1+\tau}\|u-U_h\|_{0,\partial\Omega}\|\lambda_h-\Pi_hu\|_{0,\partial\Omega}\\&+
\frac{2}{1+\tau}\|{\Bq}-{\BQ}_h\|_{0,\partial\Omega}\|\lambda_h-\Pi_hu\|_{0,\partial\Omega}\\
&\lesssim
\|u-\Pi_hu\|_{0,\partial\Omega}^2+\|u-U_h\|_{0,\partial\Omega}^2+
{\tau}^{-2}\|{\Bq}-{\BQ}_h\|_{0,\partial\Omega}^2.
\end{align*}
Similar to the proof for the case of inner edges, we can also show that
\begin{align*}
\|u-\lambda_h\|_{0,\partial\Omega}\lesssim \frac{\kappa
h^{\frac{3}{2}}}{p}M(\tilde f,\tilde g).
\end{align*}
Hence (\ref{Lam}) is derived.
\end{proof}

\section{The error estimates}
In this section, we shall derive error estimates for the scheme
(\ref{P1})-(\ref{P4}). This will be done by making use of the
stability estimates derived in Theorem \ref{theorem31} and the error
estimates of the auxiliary problem established in the previous
section.
\begin{thm}
Let ${{\Bq}}_h$, $u_h$ and $\hat u_h$ be the solution of
(\ref{P1})-(\ref{P4}). We have
\begin{align}
\|u-u_h\|_{0,\Omega}&\lesssim \Big(\frac{\kappa h^2}{p^2}+\frac{\kappa^2h^2}{p^2}+\frac{\kappa^5h^4}{p^4}\Big)M(\tilde f,\tilde g),\\
\kappa\|{\Bq}-{\Bq}_h\|_{0,\Omega}&\lesssim \Big(\frac{\kappa h}{p}+\frac{\kappa^3h^2}{p^2}+\frac{\kappa^6h^4}{p^4}\Big)M(\tilde f,\tilde g),\\
\|u-\hat u_h\|_{0,\partial \mathcal T_h}&\lesssim \Big(\frac{\kappa
h^{\frac{3}{2}}}{p} +\big(\frac{\kappa
h}{p}\big)^{\frac{5}{2}}+\big(\frac{\kappa
h}{p}\big)^{\frac{5}{2}}\frac{\kappa^3h^2}{p^2}
+\frac{\kappa^2h^{\frac{3}{2}}}{p}+\frac{\kappa^5h^{\frac{7}{2}}}{p^3}\Big)M(\tilde
f,\tilde g).
\end{align}
Moreover, if $\frac{\kappa^3h^2}{p^2}\lesssim 1$, then
\begin{align}
\|u-u_h\|_{0,\Omega}&\lesssim \Big(\frac{\kappa h^2}{p^2}+\frac{\kappa^2h^2}{p^2}\Big)M(\tilde f,\tilde g),\\
\kappa\|{\Bq}-{\Bq}_h\|_{0,\Omega}&\lesssim \Big(\frac{\kappa h }{p}+\frac{\kappa^3h^2}{p^2}\Big)M(\tilde f,\tilde g),\\
\|u-\hat u_h\|_{0,\partial \mathcal T_h}&\lesssim \Big(\frac{\kappa
h^{\frac{3}{2}}}{p}+ \frac{\kappa^2h^{\frac{3}{2}}}{p} +\frac{\kappa
h^{\frac{3}{2}}}{p^{\frac{3}{2}}} \Big)M(\tilde f,\tilde g).
\end{align}
\end{thm}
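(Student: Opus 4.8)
The plan is to derive the error estimates for $(\ref{P1})$--$(\ref{P4})$ by comparing the HDG solution $(\Bq_h,u_h,\hat u_h)$ with the auxiliary HDG solution $(\BQ_h,U_h,\lambda_h)$ of $(\ref{AP1})$--$(\ref{AP4})$ and then invoking the stability bounds of Theorem~\ref{theorem31}. The starting observation is that the exact pair $(\Bq,u)$ of $(\ref{HS})$--$(\ref{HE})$ is precisely the exact solution of the auxiliary continuous problem: from ${\bf i}\kappa u+{\rm div}\,\Bq=f$ one gets ${\rm div}\,\Bq-{\bf i}\kappa u=f-2{\bf i}\kappa u$, and the Robin datum is unchanged. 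Consequently Theorems~\ref{theorem41}--\ref{theorem42} and the estimate $(\ref{Lam})$ are genuine error estimates, so that, writing $e_u:=u-U_h$, one has $\|\Bq-\BQ_h\|_{0,\Omega}\lesssim\tfrac{h}{p}M(\tilde f,\tilde g)$, $\|e_u\|_{0,\Omega}\lesssim\tfrac{\kappa h^2}{p^2}M(\tilde f,\tilde g)$ and $\|u-\lambda_h\|_{0,\partial\mathcal T_h}\lesssim\tfrac{\kappa h^{3/2}}{p}M(\tilde f,\tilde g)$.

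Next I set $\delta_{\Bq}:=\Bq_h-\BQ_h$, $\delta_u:=u_h-U_h$, $\delta_{\hat u}:=\hat u_h-\lambda_h$ and subtract $(\ref{AP1})$--$(\ref{AP4})$ from $(\ref{P1})$--$(\ref{P4})$. The differences of $(\ref{P1})$, $(\ref{P3})$, $(\ref{P4})$ are just the corresponding equations for $(\delta_{\Bq},\delta_u,\delta_{\hat u})$ with zero right-hand side. In the difference of $(\ref{P2})$ and $(\ref{AP2})$ the volume term becomes $({\bf i}\kappa(u_h+U_h),\overline w)_{\mathcal T_h}$; writing $u_h+U_h=\delta_u+2U_h$ and moving $2{\bf i}\kappa U_h$ to the right-hand side, it cancels against the source difference $f-(f-2{\bf i}\kappa u)=2{\bf i}\kappa u$ and leaves $(2{\bf i}\kappa e_u,\overline w)_{\mathcal T_h}$. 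Since $\hat{\Bq}_h-\hat{\BQ}_h=\delta_{\Bq}+\tau(\delta_u-\delta_{\hat u})\Bn$ is exactly the HDG numerical flux attached to $\delta_{\Bq}$, the triple $(\delta_{\Bq},\delta_u,\delta_{\hat u})$ is the solution of $(\ref{P1})$--$(\ref{P4})$ with $(f,g)$ replaced by $(2{\bf i}\kappa e_u,0)$. Applying Theorem~\ref{theorem31} to this triple and using $\|2{\bf i}\kappa e_u\|_{0,\Omega}\lesssim\kappa\|e_u\|_{0,\Omega}$ gives
\begin{align*}
\|\delta_u\|_{0,\Omega}+\|\delta_{\Bq}\|_{0,\Omega}&\lesssim\Big(1+\tfrac{\kappa^3h^2}{p^2}\Big)\kappa\|e_u\|_{0,\Omega},\\
\|\delta_{\hat u}\|_{0,\partial\mathcal T_h}&\lesssim\Big(\big(\tfrac{\kappa h}{p}\big)^{1/2}+ph^{-1/2}\Big)\Big(1+\tfrac{\kappa^3h^2}{p^2}\Big)\kappa\|e_u\|_{0,\Omega}.
\end{align*}

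It then remains to combine by the triangle inequality, $\|u-u_h\|_{0,\Omega}\le\|e_u\|_{0,\Omega}+\|\delta_u\|_{0,\Omega}$, $\kappa\|\Bq-\Bq_h\|_{0,\Omega}\le\kappa\|\Bq-\BQ_h\|_{0,\Omega}+\kappa\|\delta_{\Bq}\|_{0,\Omega}$ and $\|u-\hat u_h\|_{0,\partial\mathcal T_h}\le\|u-\lambda_h\|_{0,\partial\mathcal T_h}+\|\delta_{\hat u}\|_{0,\partial\mathcal T_h}$, to substitute $\|e_u\|_{0,\Omega}\lesssim\tfrac{\kappa h^2}{p^2}M(\tilde f,\tilde g)$, and to expand the products into monomials: $\kappa\cdot\tfrac{\kappa h^2}{p^2}\cdot(1+\tfrac{\kappa^3h^2}{p^2})$ yields $\tfrac{\kappa^2h^2}{p^2}+\tfrac{\kappa^5h^4}{p^4}$ (and $\kappa$ times that for $\Bq$), while $\big((\tfrac{\kappa h}{p})^{1/2}+ph^{-1/2}\big)\tfrac{\kappa^2h^2}{p^2}(1+\tfrac{\kappa^3h^2}{p^2})$ produces exactly $(\tfrac{\kappa h}{p})^{5/2}+(\tfrac{\kappa h}{p})^{5/2}\tfrac{\kappa^3h^2}{p^2}+\tfrac{\kappa^2h^{3/2}}{p}+\tfrac{\kappa^5h^{7/2}}{p^3}$; adding the leading contributions $\tfrac{\kappa h^2}{p^2}$, $\tfrac{\kappa h}{p}$, $\tfrac{\kappa h^{3/2}}{p}$ from the auxiliary estimates gives the three unconditional bounds. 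For the improved estimates, $\tfrac{\kappa^3h^2}{p^2}\lesssim1$ makes $1+\tfrac{\kappa^3h^2}{p^2}\lesssim1$, which drops the highest-order terms for $u$ and $\Bq$; for the trace one additionally uses $\tfrac{\kappa^3h^2}{p^2}\lesssim1\Rightarrow\tfrac{\kappa^{3/2}h}{p}\lesssim1$ to reduce $(\tfrac{\kappa h}{p})^{5/2}=\tfrac{\kappa^{3/2}h}{p}\cdot\tfrac{\kappa h^{3/2}}{p^{3/2}}\lesssim\tfrac{\kappa h^{3/2}}{p^{3/2}}$ and $\tfrac{\kappa^5h^{7/2}}{p^3}=\tfrac{\kappa^3h^2}{p^2}\cdot\tfrac{\kappa^2h^{3/2}}{p}\lesssim\tfrac{\kappa^2h^{3/2}}{p}$.

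The main obstacle is the bookkeeping in the second step: one must verify carefully that the sign discrepancy between the $+{\bf i}\kappa u_h$ appearing in $(\ref{P2})$ and the $-{\bf i}\kappa U_h$ in $(\ref{AP2})$ is \emph{exactly} offset by the $-2{\bf i}\kappa u$ built into the auxiliary source, so that the difference triple solves the \emph{unmodified} scheme $(\ref{P1})$--$(\ref{P4})$ (and not a perturbed variant) and Theorem~\ref{theorem31} applies verbatim; once that identification is in place, the remainder is routine triangle-inequality estimates and monomial algebra.
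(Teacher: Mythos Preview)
Your proposal is correct and follows essentially the same route as the paper: form the difference triple $(\Bq_h-\BQ_h,\,u_h-U_h,\,\hat u_h-\lambda_h)$, verify that it satisfies the original HDG scheme $(\ref{P1})$--$(\ref{P4})$ with data $(2{\bf i}\kappa(u-U_h),0)$, apply the stability estimates of Theorem~\ref{theorem31}, and finish with the triangle inequality and the auxiliary error bounds of Theorems~\ref{theorem41}--\ref{theorem42} and $(\ref{Lam})$. Your sign bookkeeping in the subtraction of $(\ref{AP2})$ from $(\ref{P2})$ and your monomial expansion (including the reductions under $\tfrac{\kappa^3h^2}{p^2}\lesssim1$) are accurate and in fact more explicit than the paper's own sketch.
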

\begin{proof}
Denote ${\bf \epsilon}_h^{\Bq}:={\Bq}_h-{\BQ}_h$,
$\epsilon_h^u:=u_h-U_h$, and $\epsilon_h^{\hat u}:=\hat
u_h-\lambda_h$, according to the formulation (\ref{P1})-(\ref{P4})
and (\ref{AP1})-(\ref{AP4}), we have $({\bf
\epsilon}_h^{\Bq},\epsilon_h^u,\epsilon_h^{\hat u})\in
{\BV}_h^p\times W_h^p\times M_h^p$ and they satisfy
\begin{align*}
({\bf i}\kappa {\bf \epsilon}_h^{\Bq},\overline{\Br})_{\mathcal T_h}-(\epsilon_h^u, \overline{{\rm div \, {\Br}}})_{\mathcal T_h}+\langle \epsilon_h^{\hat u},\overline {{\Br}\cdot {\Bn}}\rangle_{\partial \mathcal T_h}&=0,\\
({\bf i}\kappa \epsilon_h^u,\overline w)_{\mathcal T_h}-({\bf \epsilon}_h^{\Bq},\overline{\nabla w})_{\mathcal T_h}+\langle \hat {\Bq}_h\cdot {\Bn} ,\overline w\rangle_{\partial \mathcal T_h}&=2{\bf i}\kappa(U_h-u,\overline w)_{\mathcal T_h} ,\\
\langle -\hat{\bf \epsilon}_h^{\Bq}\cdot {\Bn}+ \epsilon_h^{\hat u},\overline \mu\rangle_{\partial \Omega}&=0,\\
\langle \hat{\bf \epsilon}_h^{\Bq}\cdot {\Bn},\overline
\mu\rangle_{\partial \mathcal T_h \backslash
 \partial \Omega}&=0,
\end{align*}
for all ${\Br}\in {\BV}_h^p$, $w\in W_h^p$, and $\mu\in M_h^p$. The
numerical flux $\hat {\bf \epsilon}_h^{\Bq}$ is given by
\begin{align}
\hat {\bf \epsilon}_h^{\Bq}={\bf \epsilon}_h^{\Bq}+\tau
(\epsilon_h^u-\epsilon_h^{\hat u}){\Bn} \qquad {\rm on } \
\partial \mathcal T_h.
\end{align}
The stability estimates in Theorem \ref{theorem31} imply that
\begin{align*}
\|\epsilon_h^u\|_{0,\Omega}&\lesssim \Big(1+\frac{\kappa^3h^2}{p^2}\Big)\kappa\|u-U_h\|_{0,\Omega},\\
\|{\bf \epsilon}_h^{\Bq}\|_{0,\Omega}&\lesssim \Big(1+\frac{\kappa^3h^2}{p^2}\Big)\kappa\|u-U_h\|_{0,\Omega},\\
\|\epsilon_h^{\hat u}\|_{0,\partial \mathcal T_h}&\lesssim
\Big(\big(\frac{\kappa
h}{p}\big)^{\frac{1}{2}}+ph^{-\frac{1}{2}}\Big)\Big(1+\frac{\kappa^3h^2}{p^2}\Big)\kappa\|u-U_h\|_{0,\Omega}.
\end{align*}
Using the triangle inequality and Theorem \ref{theorem42}, the proof is finished.
\end{proof}
Next we demonstrate the improved convergence results for the coarse meshes under the condition $\frac{\kappa^3h^2}{p^2}\gtrsim 1$. First we give the stability estimate for the following elliptic HDG scheme.
\begin{lem}\label{lemma61}
Let $(\mathcal Q_h$, $\mathcal U_h$, $\hat {\mathcal U}_h)\in {\BV}_h^p\times
W_h^p\times M_h^p$ be the solution of the following elliptic HDG scheme
\begin{align*}
({\bf i}\kappa \mathcal Q_h,\overline{\Br})_{\mathcal T_h}-(\mathcal U_h, \overline{{\rm div \, {\Br}}})_{\mathcal T_h}+\langle \hat {\mathcal U}_h,\overline {{\Br}\cdot {\Bn}}\rangle_{\partial \mathcal T_h}&=0,\\
-({\bf i}\kappa \mathcal U_h,\overline w)_{\mathcal T_h}-(\mathcal Q_h,\overline{\nabla w})_{\mathcal T_h}+\langle \hat {\mathcal Q}_h\cdot {\Bn} ,\overline w\rangle_{\partial \mathcal T_h}&=(f,\overline w)_{\mathcal T_h} ,\\
\langle -\hat{\mathcal Q}_h\cdot {\Bn}+ \hat {\mathcal U}_h,\overline \mu\rangle_{\partial \Omega}&=0,\\
\langle \hat{\mathcal Q}_h\cdot {\Bn},\overline
\mu\rangle_{\partial \mathcal T_h \backslash
 \partial \Omega}&=0,
\end{align*}
for all ${\Br}\in {\BV}_h^p$, $w\in W_h^p$, and $\mu\in M_h^p$,
where the numerical flux
$\hat {\mathcal Q}_h$ is given by
\begin{align*}\label{numerical-flux}
\hat {\mathcal Q}_h=\mathcal Q_h+\tau (\mathcal U_h-\hat {\mathcal U}_h){\Bn} \qquad {\rm on } \
\partial \mathcal T_h.
\end{align*}
Then there holds
\begin{align}
\|\mathcal Q_h\|_{0,\Omega}+\|{\mathcal{U}}_h\|_{0,\Omega}\lesssim \kappa^{-1}\|f\|_{0,\Omega}.
\end{align}
\end{lem}
\begin{proof}
Similar to the proof of Lemma \ref{lemma35}, we can deduce
\begin{align*}
({\bf i}\kappa \mathcal U_h,\overline {\mathcal U_h})_{\mathcal T_h}+({\bf i}\kappa
\mathcal Q_h,\overline{\mathcal Q_h})_{\mathcal T_h}+\langle \tau(\mathcal U_h-\hat {\mathcal U}_h),
\overline{(\mathcal U_h-\hat {\mathcal U}_h)}\rangle_{\partial \mathcal T_h}
+\langle\hat {\mathcal U}_h,\overline{\hat {\mathcal U}_h}\rangle_{\partial \Omega}=
(f,\overline {\mathcal U_h})_{\mathcal T_h}.
\end{align*}
Hence
\begin{align*}
\kappa \|{\mathcal{U}}_h\|_{0,\Omega}^2&\leq \|f\|_{0,\Omega}\|{\mathcal{U}}_h\|_{0,\Omega},\\
\kappa \|{\mathcal{Q}}_h\|_{0,\Omega}^2&\leq \|f\|_{0,\Omega}\|{\mathcal{U}}_h\|_{0,\Omega},
\end{align*}
which means
\begin{align*}
\kappa (\|{\mathcal{U}}_h\|_{0,\Omega}+\|{\mathcal{Q}}_h\|_{0,\Omega})\leq \|f\|_{0,\Omega}.
\end{align*}
\end{proof}

\begin{thm}\label{theorem61}
Let ${{\Bq}}_h$, $u_h$ and $\hat u_h$ be the solution of
(\ref{P1})-(\ref{P4}). Under the mesh condition
$\frac{\kappa^3h^2}{p^2}\gtrsim 1$, we have
\begin{align}
\|u-u_h\|_{0,\Omega}&\lesssim \frac{\kappa^2h^2}{p^2}M(\tilde f,\tilde g),\\
\kappa\|{\Bq}-{\Bq}_h\|_{0,\Omega}&\lesssim \Big(\frac{\kappa h }{p}+\frac{\kappa^3h^2}{p^2}\Big)M(\tilde f,\tilde g),\\
\|u-\hat u_h\|_{0,\partial \mathcal T_h}&\lesssim \Big(\frac{\kappa^2h^{\frac{3}{2}}}{p} +\frac{\kappa
h^{\frac{3}{2}}}{p^{\frac{3}{2}}} \Big)M(\tilde f,\tilde g).
\end{align}
\end{thm}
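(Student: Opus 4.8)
Here is how I would attack it. As in the first theorem of this section, write $u-u_h=(u-U_h)-\epsilon_h^u$ with $\epsilon_h^{\Bq}:={\Bq}_h-{\BQ}_h$, $\epsilon_h^u:=u_h-U_h$, $\epsilon_h^{\hat u}:=\hat u_h-\lambda_h$, so that $(\epsilon_h^{\Bq},\epsilon_h^u,\epsilon_h^{\hat u})\in{\BV}_h^p\times W_h^p\times M_h^p$ solves the HDG Helmholtz system with source $S:=2{\bf i}\kappa(u-U_h)$ and homogeneous boundary data. Theorem~\ref{theorem42} already disposes of the first piece, $\|u-U_h\|_{0,\Omega}\lesssim\frac{\kappa h^2}{p^2}M(\tilde f,\tilde g)\lesssim\frac{\kappa^2h^2}{p^2}M(\tilde f,\tilde g)$, so everything reduces to estimating $\epsilon_h^u$, $\epsilon_h^{\Bq}$, $\epsilon_h^{\hat u}$ \emph{without} the amplification factor $1+\frac{\kappa^3h^2}{p^2}$ that a direct application of Theorem~\ref{theorem31} produces. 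In the coarse regime $\frac{\kappa^3h^2}{p^2}\gtrsim 1$ that factor is genuinely large, so the improvement must come from the special structure of $S$ (it is $\kappa$ times the auxiliary HDG error $u-U_h$) together with the new coercive estimate of Lemma~\ref{lemma61}.

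The two facts I would exploit are the following. First, whenever $S$ is tested against a function of $W_h^p$ one may replace $u-U_h$ by $\Pi_h(u-U_h)=\Pi_hu-U_h$, and the bound $E\lesssim\frac{\kappa h^2}{p^2}M^2(\tilde f,\tilde g)$ of Theorem~\ref{theorem41} yields $\|\Pi_h(u-U_h)\|_{0,\Omega}\lesssim\frac{h}{p}M(\tilde f,\tilde g)$, together with the corresponding control of its jumps across $\mathcal E_h^0$ and on $\partial\Omega$, and also $\|{\Bq}-{\BQ}_h\|_{0,\Omega}\lesssim\frac{h}{p}M(\tilde f,\tilde g)$ and the analogous bounds for ${\bf \Pi}_h{\Bq}-{\BQ}_h$. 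Second, $(\epsilon_h^{\Bq},\epsilon_h^u,\epsilon_h^{\hat u})$ is simultaneously the solution of the coercive HDG scheme of Lemma~\ref{lemma61} with source $-2{\bf i}\kappa(u_h-u)$ and homogeneous data, which supplies a $\kappa^{-1}$-gaining control of $\epsilon_h^{\Bq}$ and $\epsilon_h^u$; combined with the energy identity $\kappa\|\epsilon_h^u\|_{0,\Omega}^2-\kappa\|\epsilon_h^{\Bq}\|_{0,\Omega}^2={\rm Im}(S,\overline{\epsilon_h^u})_{\mathcal T_h}$ and with \eqref{q_h-H}, \eqref{trace-H} for the difference system, this pins down $\epsilon_h^{\Bq}$. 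Concretely, I would re-run the duality argument of Theorem~\ref{theorem31} for $\epsilon_h^u$, but replace, term by term, the crude source norm $\|f\|_{0,\Omega}$ by the sharpened quantities just listed: the factor $\kappa^{3/2}h/p$ that formerly multiplied $\|S\|^{1/2}$ now multiplies $\|\Pi_h(u-U_h)\|^{1/2}\lesssim\big(\frac{h}{p}M(\tilde f,\tilde g)\big)^{1/2}$, and a Young's inequality leaves $\|\epsilon_h^u\|_{0,\Omega}\lesssim\kappa\|u-U_h\|_{0,\Omega}\lesssim\frac{\kappa^2h^2}{p^2}M(\tilde f,\tilde g)$; with the first piece this gives the stated bound on $\|u-u_h\|_{0,\Omega}$.

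The remaining two estimates then follow as in the companion results of this section. For the flux I would combine $\kappa\|{\Bq}-{\BQ}_h\|_{0,\Omega}\lesssim\frac{\kappa h}{p}M(\tilde f,\tilde g)$ from Theorem~\ref{theorem41} with the analogue of \eqref{q_h-H} for the difference system, $\kappa\|\epsilon_h^{\Bq}\|_{0,\Omega}^2\lesssim\kappa\|\epsilon_h^u\|_{0,\Omega}^2+\|S\|_{0,\Omega}^2$, and the bound on $\epsilon_h^u$ just obtained, producing $\kappa\|{\Bq}-{\Bq}_h\|_{0,\Omega}\lesssim\big(\frac{\kappa h}{p}+\frac{\kappa^3h^2}{p^2}\big)M(\tilde f,\tilde g)$; for the numerical trace I would use the Section~5 estimate on $\|u-\lambda_h\|_{0,\partial\mathcal T_h}$ together with a bound on $\|\epsilon_h^{\hat u}\|_{0,\partial\mathcal T_h}$ obtained exactly as in the proof of Theorem~\ref{theorem31}, from the inverse trace inequality of Lemma~\ref{lemma33} and \eqref{trace-H} for the difference system, now inserting the pollution-free bounds on $\epsilon_h^u$ and $\epsilon_h^{\Bq}$. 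The part I expect to be the main obstacle is exactly the bookkeeping in the middle step: one must follow each appearance of the source through the duality chain and verify that the dual-function approximation errors pair precisely against the sharpened $E$-quantities and the coercive estimate of Lemma~\ref{lemma61}, so that the factor $1+\frac{\kappa^3h^2}{p^2}$ is genuinely removed rather than merely relocated to another term.
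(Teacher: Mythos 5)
Your plan for the flux and trace bounds is essentially the paper's (Lemma~\ref{lemma61} applied to the difference system in its ``elliptic'' form with source $2{\bf i}\kappa(u-u_h)$, plus Theorem~\ref{theorem41} and the trace/inverse inequalities), and those parts would go through \emph{once} the $L^2$ bound $\|u-u_h\|_{0,\Omega}\lesssim\frac{\kappa^2h^2}{p^2}M(\tilde f,\tilde g)$ is in hand. The gap is precisely in your middle step, and it is structural rather than bookkeeping. If you keep the splitting $u-u_h=(u-U_h)-\epsilon_h^u$ and run the duality argument of Theorem~\ref{theorem31} on the difference system with source $S=2{\bf i}\kappa(u-U_h)$, the Young's-inequality step applied to the term $\kappa^{3/2}\frac{h}{p}\|S\|_*^{1/2}\|\epsilon_h^u\|^{1/2}$ always returns $\frac{\kappa^3h^2}{p^2}\|S\|_*$, whatever sharpened source norm $\|S\|_*$ you feed it. With your best choice $\|S\|_*\simeq\kappa\|\Pi_h(u-U_h)\|_{0,\Omega}\lesssim\frac{\kappa^2h^2}{p^2}M(\tilde f,\tilde g)$ this yields $\frac{\kappa^5h^4}{p^4}M(\tilde f,\tilde g)$, which under the standing hypothesis $\frac{\kappa^3h^2}{p^2}\gtrsim 1$ is \emph{larger} than the target $\frac{\kappa^2h^2}{p^2}M(\tilde f,\tilde g)$ --- the amplification factor is relocated, not removed. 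The alternative you mention, using the $\kappa^{-1}$-gaining bound of Lemma~\ref{lemma61} to control $\epsilon_h^u$ itself, is circular for the $L^2$ estimate: it gives $\|\epsilon_h^u\|_{0,\Omega}\lesssim 2\|u-u_h\|_{0,\Omega}$, and the constant cannot be absorbed.

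The paper closes this by abandoning the auxiliary-problem splitting for the $L^2$ estimate altogether: it poses the dual problem with data $u-u_h$ and runs the duality identity directly on the consistency error equations of the original scheme. The mesh condition enters only once, to convert the energy identity of Lemma~\ref{lemma35} together with the stability bound (\ref{u_h}) into the a priori estimate $\|u_h-\hat u_h\|_{0,\partial\mathcal T_h}\lesssim\frac{\kappa h^{3/2}}{p^{3/2}}M(\tilde f,\tilde g)$ (inequality (\ref{diff})). After the orthogonality manipulations, every term in the resulting identity pairs a projection error of the dual solution $(\Psi,{\bf\Phi})$ against either a projection error of the exact solution $(u,\Bq)$ or against $u_h-\hat u_h$, each contributing $\frac{\kappa^2h^2}{p^2}\|u-u_h\|_{0,\Omega}M(\tilde f,\tilde g)$; no term of the form ``source times dual-solution norm'' survives, so no amplification factor is ever generated. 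If you want to salvage your outline, the fix is to replace your refined duality argument for $\epsilon_h^u$ by this direct duality argument for $u-u_h$, keeping your treatment of the flux and of the numerical trace (the paper's trace bound actually goes through $\|u-\Pi_hu\|+\|u_h-\Pi_hu\|+\|u_h-\hat u_h\|$ rather than through $\|u-\lambda_h\|$, but either works once (\ref{diff}) and the $L^2$ bound are available).
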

\begin{proof}
In this mesh condition, the stability estimates in Theorem
\ref{theorem31} and Lemma \ref{lemma35} indicate the following
inequality
\begin{align}
\label{diff}
\|u_h-\hat u_h\|_{0,\partial \mathcal T_h}\lesssim \frac{kh^{\frac{3}{2}}}{p^{\frac{3}{2}}}M(\tilde f,\tilde g).
\end{align}
The consistence of the HDG scheme implies that
\begin{align}
\label{EP1}
({\bf i}\kappa (\Bq-{\Bq}_h),\overline{\Br})_{\mathcal T_h}-(u-u_h, \overline{{\rm div \, {\Br}}})_{\mathcal T_h}+\langle u-\hat u_h,\overline {{\Br}\cdot {\Bn}}\rangle_{\partial \mathcal T_h}&=0,\\
\label{EP2}
({\bf i}\kappa (u-u_h),\overline w)_{\mathcal T_h}+({\rm div\ }(\Bq-{\Bq}_h),\overline{ w})_{\mathcal T_h}-\tau \langle u_h-\hat u_h ,\overline w\rangle_{\partial \mathcal T_h}&=0,\\
\label{EP3}
\langle -(\Bq-\hat{\Bq}_h)\cdot {\Bn}+ (u-\hat u_h),\overline \mu\rangle_{\partial \Omega}&=0,\\
\label{EP4} \langle (\Bq-\hat{\Bq}_h)\cdot {\Bn},\overline
\mu\rangle_{\partial \mathcal T_h \backslash
 \partial \Omega}&=0,
\end{align}
for all ${\Br}\in {\BV}_h^p$, $w\in W_h^p$, and $\mu\in M_h^p$. We
introduce the dual problem which replaces the right hand side of
(\ref{D1})-(\ref{D3}) by $u-u_h$ as follows:
\begin{align}
 \label{ED1}
 -{\bf i}\kappa{\bf \Phi}+\nabla \Psi&=0\qquad \ {\rm in\ }\Omega,\\
  \label{ED2}
 {\rm div\, }{\bf \Phi}-{\bf i}\kappa\Psi&=u-u_h \qquad {\rm in\ }\Omega,\\
  \label{ED3}
 {\bf \Phi}\cdot {\Bn}&=\Psi \qquad \ {\rm on\ }\partial \Omega.
\end{align}
Similar to Lemma \ref{lemma46}, we have the following regularity
estimate:
\begin{align}
\label{ERegularity}
\kappa^{-2}\|\Psi\|_{2,\Omega}+\kappa^{-1}\|\Psi\|_{1,\Omega}
+\kappa^{-1}\|{\bf \Phi}\|_{1,\Omega}\lesssim \|u-u_h\|_{0,\Omega}.
\end{align}
We denote by $P_M$ the $L^2$ projection onto $M_h^p$,
$$\langle P_M \Psi, \mu\rangle_{\partial \mathcal T_h}=\langle  \Psi, \mu\rangle_{\partial \mathcal T_h}\quad {\rm for \ all} \  \mu \in M_h^p.$$
From (\ref{EP1}) and (\ref{ED2}), we can easily get
\begin{align*}
 &(u-u_h,\overline {u-u_h})_{\mathcal T_h}=(u-u_h, \overline{{\rm div \, }{\bf \Phi}-{\bf i}\kappa\Psi})_{\mathcal T_h}=(u-u_h, \overline{{\rm div \, }{\bf \Phi}})_{\mathcal T_h}+{\bf i}\kappa(u-u_h, \overline \Psi)_{\mathcal T_h}\\
 &=(u-u_h, \overline{{\rm div \, }({\bf \Phi}-{\bf \Pi}_h{\bf \Phi})})_{\mathcal T_h}+({\bf i}\kappa(\Bq-{\Bq}_h),\overline{{\bf \Pi}_h{\bf \Phi}})_{\mathcal T_h}+\langle u-\hat u_h,\overline {{\bf \Pi}_h{\bf \Phi}\cdot {\Bn}}\rangle_{\partial \mathcal T_h}+{\bf i}\kappa(u-u_h, \overline \Psi)_{\mathcal T_h}.
  \end{align*}
According to (\ref{ED1}) and Green formulation, there holds
\begin{align*}
({\bf i}\kappa(\Bq-{\Bq}_h),\overline{{\bf \Phi}})_{\mathcal
T_h}=-(\Bq-{\Bq}_h,\overline{\nabla \Psi})_{\mathcal T_h} =({\rm div
\ }(\Bq-{\Bq}_h),\overline{\Psi})_{\mathcal T_h}-\langle
(\Bq-{\Bq}_h)\cdot {\Bn},\overline{\Psi}\rangle_{\partial \mathcal
T_h}.
\end{align*}
Hence, by (\ref{EP2}) we obtain
\begin{align}
\label{ex}
 (u-u_h,\overline {u-u_h})_{\mathcal T_h}=A_1+A_2,
 \end{align}
where
\begin{align*}
A_1&=(u-u_h, \overline{{\rm div \, }({\bf \Phi}-{\bf \Pi}_h{\bf \Phi})})_{\mathcal T_h}-({\bf i}\kappa(\Bq-{\Bq}_h),\overline{{\bf \Phi}-{\bf \Pi}_h{\bf \Phi}})_{\mathcal T_h}-\langle (\Bq-{\Bq}_h)\cdot {\Bn},\overline{\Psi-P_M\Psi}\rangle_{\partial \mathcal T_h}\\
&+({\bf i}\kappa(u-u_h)+{\rm div\ }(\Bq-{\Bq}_h),,\overline{\Psi-\Pi_h\Psi})_{\mathcal T_h}
-\tau\langle u-u_h,\overline{\Pi_h\Psi-P_M\Psi}\rangle_{\partial \mathcal T_h},
 \end{align*}
and
\begin{align*}
A_2&=\tau\langle u_h-\hat u_h,\overline{\Pi_h\Psi}\rangle_{\partial \mathcal T_h}-\langle (\Bq-{\Bq}_h)\cdot {\Bn},\overline{\Psi}\rangle_{\partial \mathcal T_h}+\langle u-\hat u_h,\overline{{\bf \Pi}_h{\bf \Phi}\cdot{\Bn}}\rangle_{\partial \mathcal T_h}\\
&+\langle (\Bq-{\Bq}_h)\cdot {\Bn},\overline{\Psi-P_M\Psi}\rangle_{\partial \mathcal T_h}
+\tau\langle u-u_h,\overline{\Pi_h\Psi-P_M\Psi}\rangle_{\partial \mathcal T_h}.
\end{align*}
Utilizing  (\ref{EP3}), (\ref{EP4}) and (\ref{ED3}), $A_2$ becomes
\begin{align*}
A_2&=-\langle (\Bq-{\Bq}_h)\cdot {\Bn},\overline{P_M\Psi}\rangle_{\partial \mathcal T_h}+\tau\langle u_h-\hat u_h,\overline{P_M\Psi}\rangle_{\partial \mathcal T_h}+\tau\langle u_h-\hat u_h,\overline{\Pi_h\Psi-P_M\Psi}\rangle_{\partial \mathcal T_h}\\
&+\langle u-\hat u_h,\overline{({\bf \Pi}_h{\bf \Phi}-{\bf \Phi})\cdot{\Bn}}\rangle_{\partial \mathcal T_h}
+\langle u-\hat u_h,\overline{{\bf \Phi}\cdot{\Bn}}\rangle_{\partial \Omega}
+\tau\langle u- u_h,\overline{\Pi_h\Psi-P_M\Psi}\rangle_{\partial \mathcal T_h}\\
&=-\langle u-\hat u_h,\overline{P_M\Psi}\rangle_{\partial \Omega}
+\tau\langle u-\hat u_h,\overline{\Pi_h\Psi-P_M\Psi}\rangle_{\partial \mathcal T_h}\\
&+\langle u-\hat u_h,\overline{({\bf \Pi}_h{\bf \Phi}-{\bf \Phi})\cdot{\Bn}}\rangle_{\partial \mathcal T_h}
+\langle u-\hat u_h,\overline{{\bf \Phi}\cdot{\Bn}}\rangle_{\partial \Omega}\\
&=\langle u-\hat u_h,\overline{\Psi-P_M\Psi}\rangle_{\partial \Omega}
+\tau\langle u-\hat u_h,\overline{\Pi_h\Psi-P_M\Psi}\rangle_{\partial \mathcal T_h}
+\langle u-\hat u_h,\overline{({\bf \Pi}_h{\bf \Phi}-{\bf \Phi})\cdot{\Bn}}\rangle_{\partial \mathcal T_h}.
\end{align*}
Taking the complex conjugation of (\ref{ex}) and making use of Green formulation and the definition of projection operator, we obtain
\begin{align*}
\|u-u_h\|_{0,\Omega}^2&={\bf i}\kappa({\bf \Phi}-{\bf \Pi}_h{\bf \Phi},\overline{{\Bq}-{\Bq}_h})_{\mathcal T_h}
+(\Psi-\Pi_h\Psi,\overline{{\rm div\ }(\Bq-{\Bq}_h)})_{\mathcal T_h}
-\langle \Psi-P_M\Psi,\overline{(\Bq-{\Bq}_h)\cdot\Bn}\rangle_{\partial \mathcal T_h}\\
&-{\bf i}\kappa(\Psi-\Pi_h\Psi,\overline{u-u_h})_{\mathcal T_h}
-({\bf \Phi}-{\bf \Pi}_h{\bf \Phi},\overline{\nabla(u-u_h)})_{\mathcal T_h}
+\langle ({\bf \Phi}-{\bf \Pi}_h{\bf \Phi})\cdot\Bn, u-u_h\rangle_{\partial \mathcal T_h}\\
&-\tau \langle \Pi_h\Psi-P_M\Psi,u-u_h\rangle_{\partial \mathcal T_h}
+\langle \Psi-P_M\Psi,\overline{u-\hat u_h}\rangle_{\partial \Omega}\\
&-\langle ({\bf \Phi}-{\bf \Pi}_h{\bf \Phi})\cdot\Bn, u-\hat u_h\rangle_{\partial \mathcal T_h}
+\tau \langle \Pi_h\Psi-P_M\Psi,u-\hat u_h\rangle_{\partial \mathcal T_h}\\
&={\bf i}\kappa({\bf \Phi}-{\bf \Pi}_h{\bf \Phi},\overline{{\Bq}-{\bf \Pi}_h{\Bq}})_{\mathcal T_h}
+(\Psi-\Pi_h\Psi,\overline{{\rm div\ }\Bq})_{\mathcal T_h}
-\langle \Psi-P_M\Psi,\overline{(\Bq-{\bf \Pi}_h{\Bq})\cdot\Bn}\rangle_{\partial \mathcal T_h}\\
&-{\bf i}\kappa(\Psi-\Pi_h\Psi,\overline{u-\Pi_hu})_{\mathcal T_h}
-({\bf \Phi}-{\bf \Pi}_h{\bf \Phi},\overline{\nabla(u-\pi_h^pu)})_{\mathcal T_h}
-\langle ({\bf \Phi}-{\bf \Pi}_h{\bf \Phi})\cdot\Bn, u_h-\hat u_h\rangle_{\partial \mathcal T_h}\\
&+\tau \langle \Pi_h\Psi-P_M\Psi,u_h-\hat u_h\rangle_{\partial \mathcal T_h}
+\langle \Psi-P_M\Psi,\overline{u-\Pi_h u}\rangle_{\partial \Omega}.
\end{align*}
Note that
\begin{align*}
\|\Pi_h\Psi-P_M\Psi\|_{0,\partial \mathcal T_h}\leq
\|\Psi-\Pi_h\Psi\|_{0,\partial \mathcal
T_h}+\|\Psi-P_M\Psi\|_{0,\partial \mathcal T_h} \lesssim
\|\Psi-\Pi_h\Psi\|_{0,\partial \mathcal T_h}.
\end{align*}
Using Lemma \ref{lemma34}, the regularity estimates
(\ref{regularity}) and (\ref{ERegularity}), the following inequality
is derived
\begin{align*}
\|u-u_h\|_{0,\Omega}^2\lesssim \frac{\kappa^2h^2}{p^2}\|u-u_h\|_{0,\Omega}M(\tilde f,\tilde g),
\end{align*}
which means
\begin{align*}
\|u-u_h\|_{0,\Omega}\lesssim \frac{\kappa^2h^2}{p^2}M(\tilde f,\tilde g).
\end{align*}
Since
$({\bf
\epsilon}_h^{\Bq},\epsilon_h^u,\epsilon_h^{\hat u})\in
{\BV}_h^p\times W_h^p\times M_h^p$  satisfy
\begin{align*}
({\bf i}\kappa {\bf \epsilon}_h^{\Bq},\overline{\Br})_{\mathcal T_h}-(\epsilon_h^u, \overline{{\rm div \, {\Br}}})_{\mathcal T_h}+\langle \epsilon_h^{\hat u},\overline {{\Br}\cdot {\Bn}}\rangle_{\partial \mathcal T_h}&=0,\\
-({\bf i}\kappa \epsilon_h^u,\overline w)_{\mathcal T_h}-({\bf \epsilon}_h^{\Bq},\overline{\nabla w})_{\mathcal T_h}+\langle \hat {\Bq}_h\cdot {\Bn} ,\overline w\rangle_{\partial \mathcal T_h}&=2{\bf i}\kappa(u-u_h,\overline w)_{\mathcal T_h} ,\\
\langle -\hat{\bf \epsilon}_h^{\Bq}\cdot {\Bn}+ \epsilon_h^{\hat u},\overline \mu\rangle_{\partial \Omega}&=0,\\
\langle \hat{\bf \epsilon}_h^{\Bq}\cdot {\Bn},\overline
\mu\rangle_{\partial \mathcal T_h \backslash
 \partial \Omega}&=0,
\end{align*}
for all ${\Br}\in {\BV}_h^p$, $w\in W_h^p$, and $\mu\in M_h^p$, it
follows from the stability estimate in Lemma \ref{lemma61} that
\begin{align*}
\kappa \|\Bq-{\Bq}_h\|_{0,\Omega}\leq \kappa \|\Bq-{\BQ}_h\|_{0,\Omega}+\kappa \|{\bf
\epsilon}_h^{\Bq}\|_{0,\Omega}\lesssim \big(\frac{\kappa h}{p}+\frac{\kappa^3 h^2}{p^2}\big)M(\tilde f,\tilde g).
\end{align*}
The triangle inequality and (\ref{diff}) imply that
\begin{align*}
\|u-\hat u_h\|_{0,\partial \mathcal T_h}&\leq \|u- \Pi_hu\|_{0,\partial \mathcal T_h}+\|u_h- \Pi_hu\|_{0,\partial \mathcal T_h}+\|u_h-\hat u_h\|_{0,\partial \mathcal T_h}\\
&\lesssim ph^{\frac{-1}{2}}(\|u-\Pi_hu\|_{0,\Omega}+\|u-u_h\|_{0,\Omega})+\kappa \big(\frac{h}{p}\big)^{\frac{3}{2}}M(\tilde f,\tilde g)\\
&\lesssim \big(\frac{\kappa^2h^{\frac{3}{2}}}{p}+\frac{\kappa h^{\frac{3}{2}}}{p^{\frac{3}{2}}}\big)M(\tilde f,\tilde g).
\end{align*}
The proof is completed.
\end{proof}

\section{Numerical results}
In this section, we present a detailed documentation of numerical
results of the HDG method for the following 2-d Helmholtz problem:
\begin{eqnarray}
&-\Delta u - \kappa^2 u = f := \frac{\sin{\kappa r}}{r} \qquad &{\rm
in} \ \Omega, \\
&\frac{\partial u}{\partial n} + \textbf{i} \kappa u = g \qquad
&{\rm on} \ \Gamma_R := \partial \Omega.
\end{eqnarray}
Here $\Omega$ is unit square $[-0.5,0.5]\times[-0.5,0.5]$, and $g$
is chosen such that the exact solution is given by
\begin{eqnarray}
u = \frac{\cos{\kappa r}}{\kappa} - \frac{\cos{\kappa} +
\textbf{i}\sin{\kappa}}{\kappa ( J_0(\kappa) + \textbf{i}J_1(\kappa)
)} J_0(\kappa r)
\end{eqnarray}
in polar coordinates, where $J_\nu (z)$ are Bessel functions of the
first kind.

\smallskip

In the numerical results of \cite{GM2011}, the optimal convergence
of the HDG method is observed when the parameter $\tau$ is chosen as
$O(1)$. In this work, when $u \in H^2(\Omega)$ we let $\tau =
\frac{p}{\kappa h}$, which is also used in the following experiment.
The HDG method is implemented for piecewise linear (HDG-P1),
piecewise quadratic (HDG-P2) and piecewise cubic (HDG-P3) finite
element spaces.

\smallskip

For the fixed wave number $\kappa$, we first show the dependence of
the convergence of $\|u-u_h\|_{0,\Omega}$, $\|\Bq -
\Bq_h\|_{0,\Omega}$ and $\|u-\hat{u}_h\|_{0,\partial \Ct_h}$ on
polynomial order $p$ and mesh size $h$. On one hand, the left graphs
of Figure \ref{fig1} display the above three kinds of errors for
$\kappa = 100$ by HDG-P1, HDG-P2 and HDG-P3 approximations. We find
that the pollution errors always appear on the coarse meshes, but
the errors of $\|u-u_h\|_{0,\Omega}$ almost converges in $O(\kappa
h^2/p^2)$ on the fine meshes, and $\|u-\hat{u}_h\|_{0,\partial
\Ct_h}$ nearly converges in $O(\kappa h^{\frac{3}{2}}/p)$ on the
fine meshes. The results support the theoretical analysis. We note
that the error of $\|\Bq - \Bq_h\|_{0,\Omega}$ also almost converges
in $O(\kappa h^2/p^2)$ on the fine meshes, which is a little better
than our theoretical prediction. On the other hand, for the case of
$\kappa = 300$, the right graphs of Figure \ref{fig1} show that the
errors of $\|u-\hat{u}_h\|_{0,\partial \Ct_h}$,
$\|u-u_h\|_{0,\Omega}$ and $\|\Bq - \Bq_h\|_{0,\Omega}$ always
decrease for high order polynomial approximations.

Figure \ref{fig2} displays the surface plots of the imaginary parts
of the HDG-P1, HDG-P2, HDG-P3 solutions of $u_h$ and the exact
solution for $\kappa = 100$ with mesh size $h \approx 0.022$. It is
shown that the HDG-P2 and HDG-P3 solutions have correct shapes and
amplitudes as the exact solution, while the HDG-P1 solution has a
correct shape but its amplitude is not very accurate near the center
of the domain.

\begin{figure}[htbp]
\centering
    \includegraphics[width=2.7in]{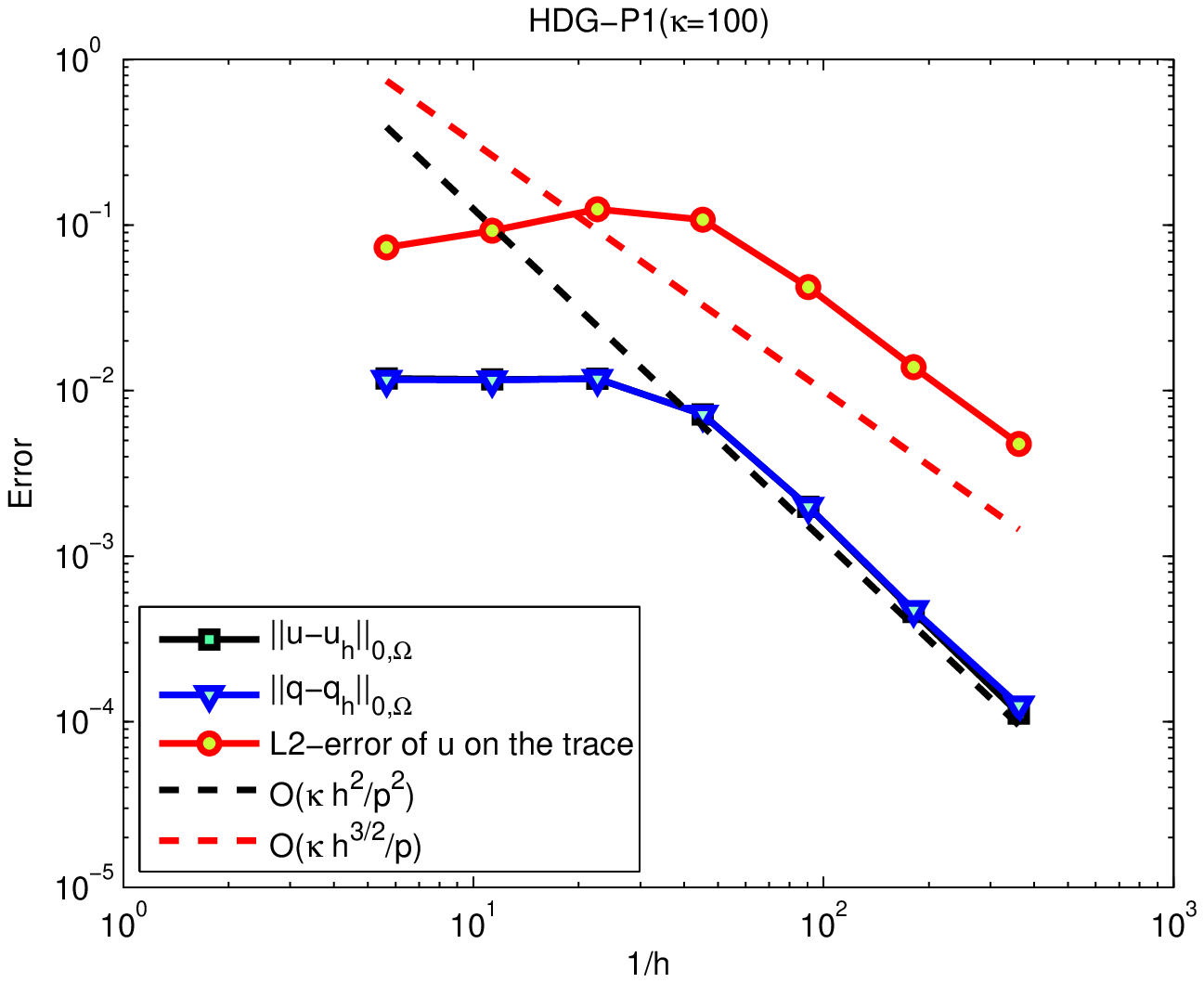}
    \includegraphics[width=2.7in]{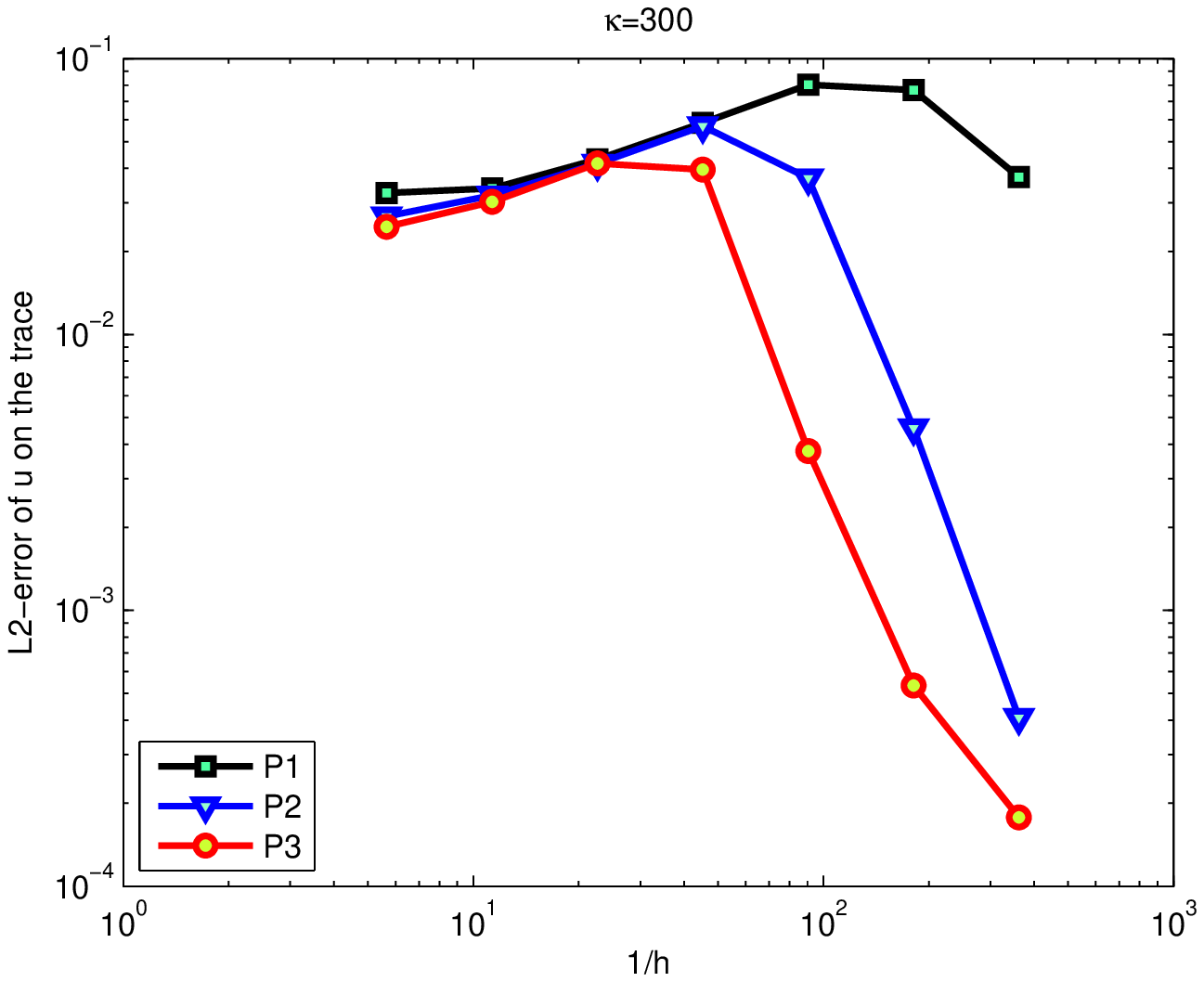}
    \includegraphics[width=2.7in]{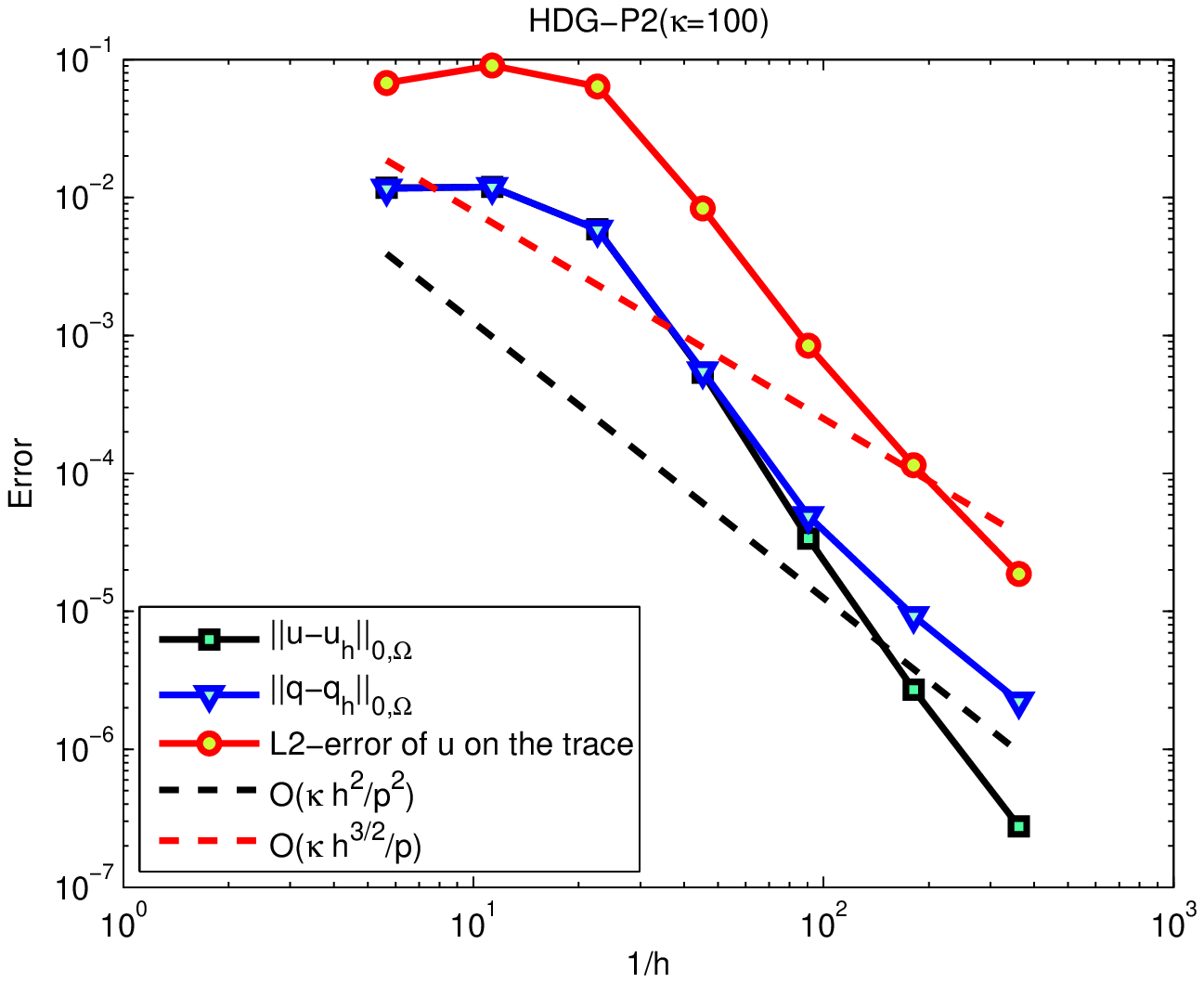}
    \includegraphics[width=2.7in]{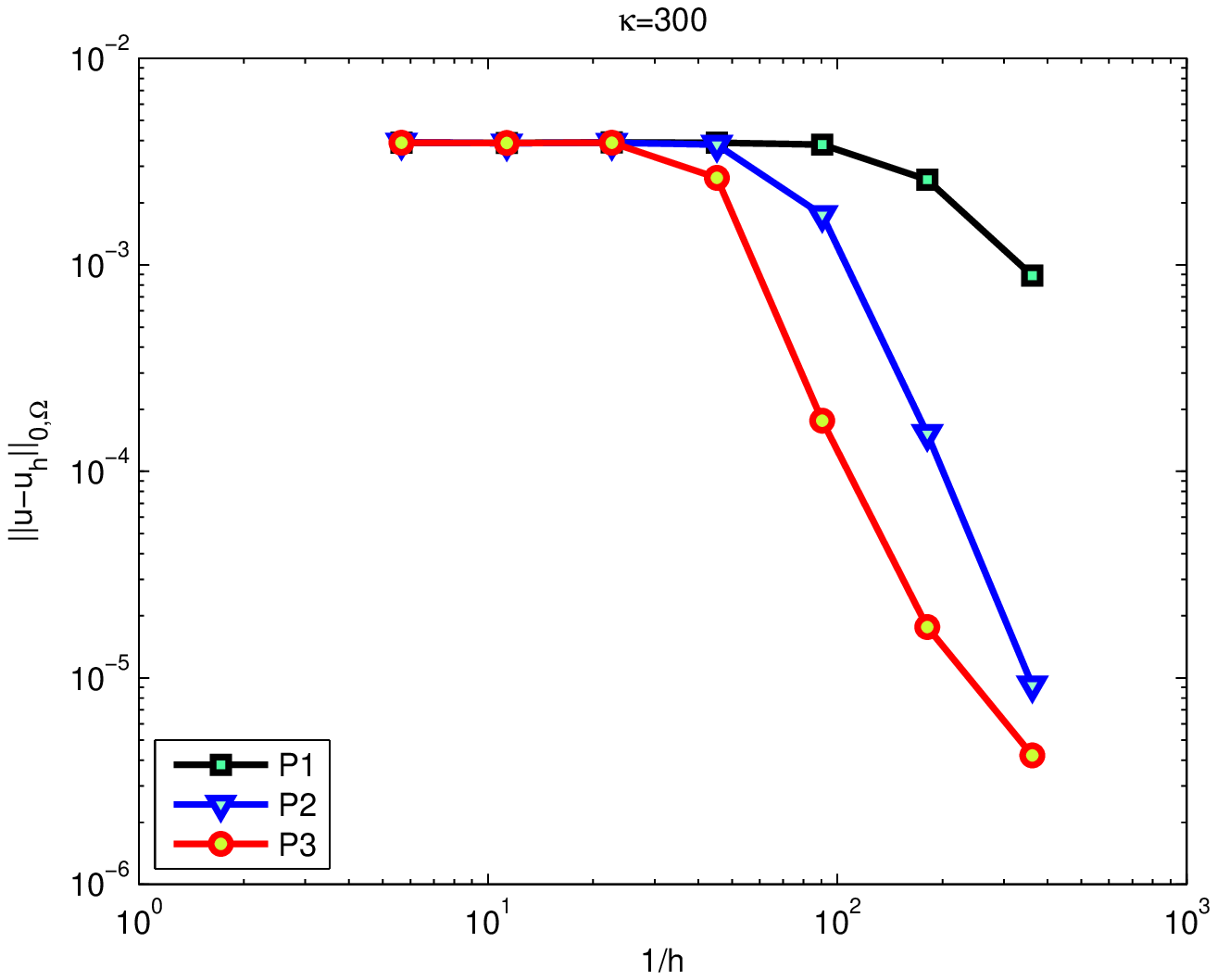}
    \includegraphics[width=2.7in]{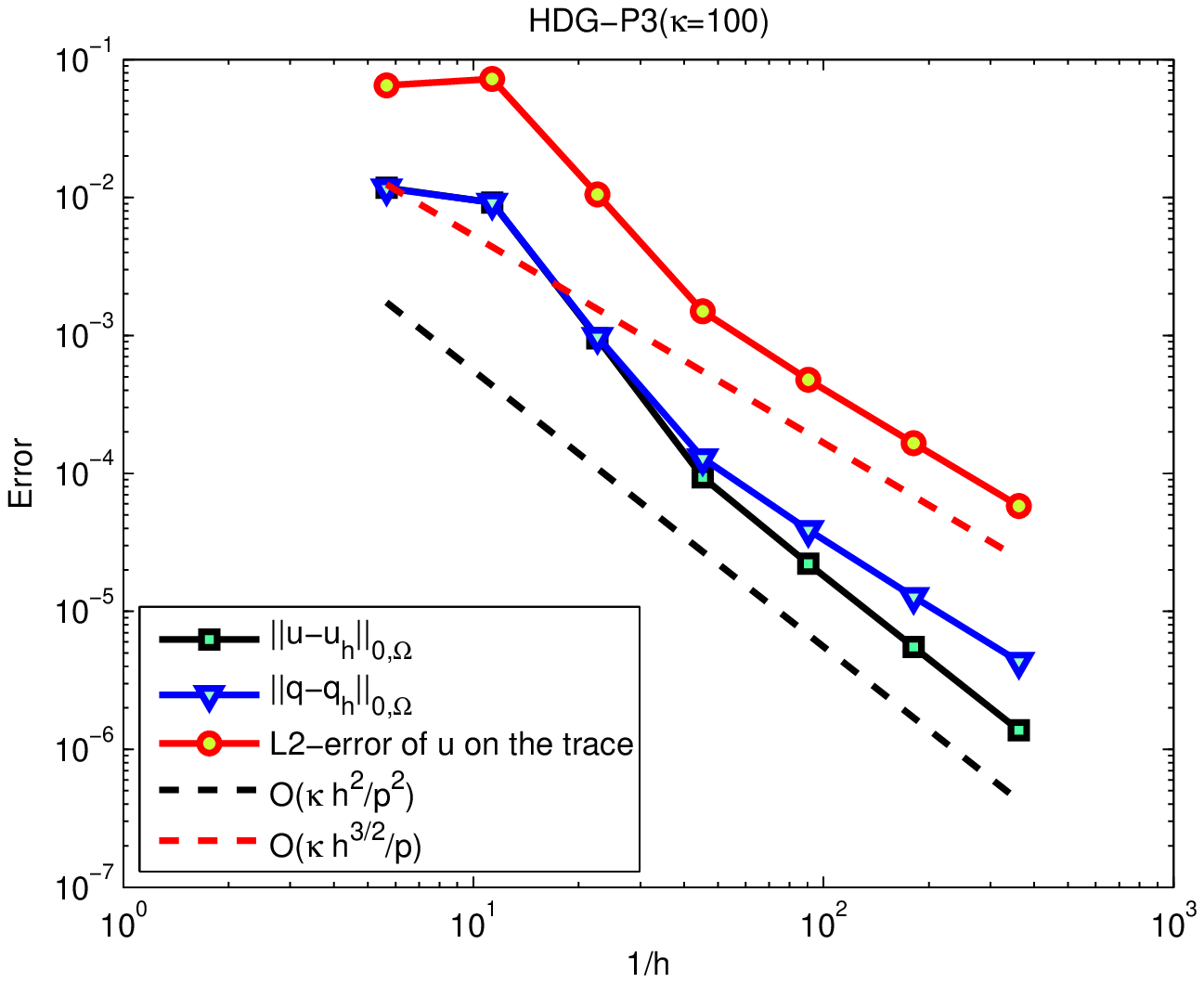}
    \includegraphics[width=2.7in]{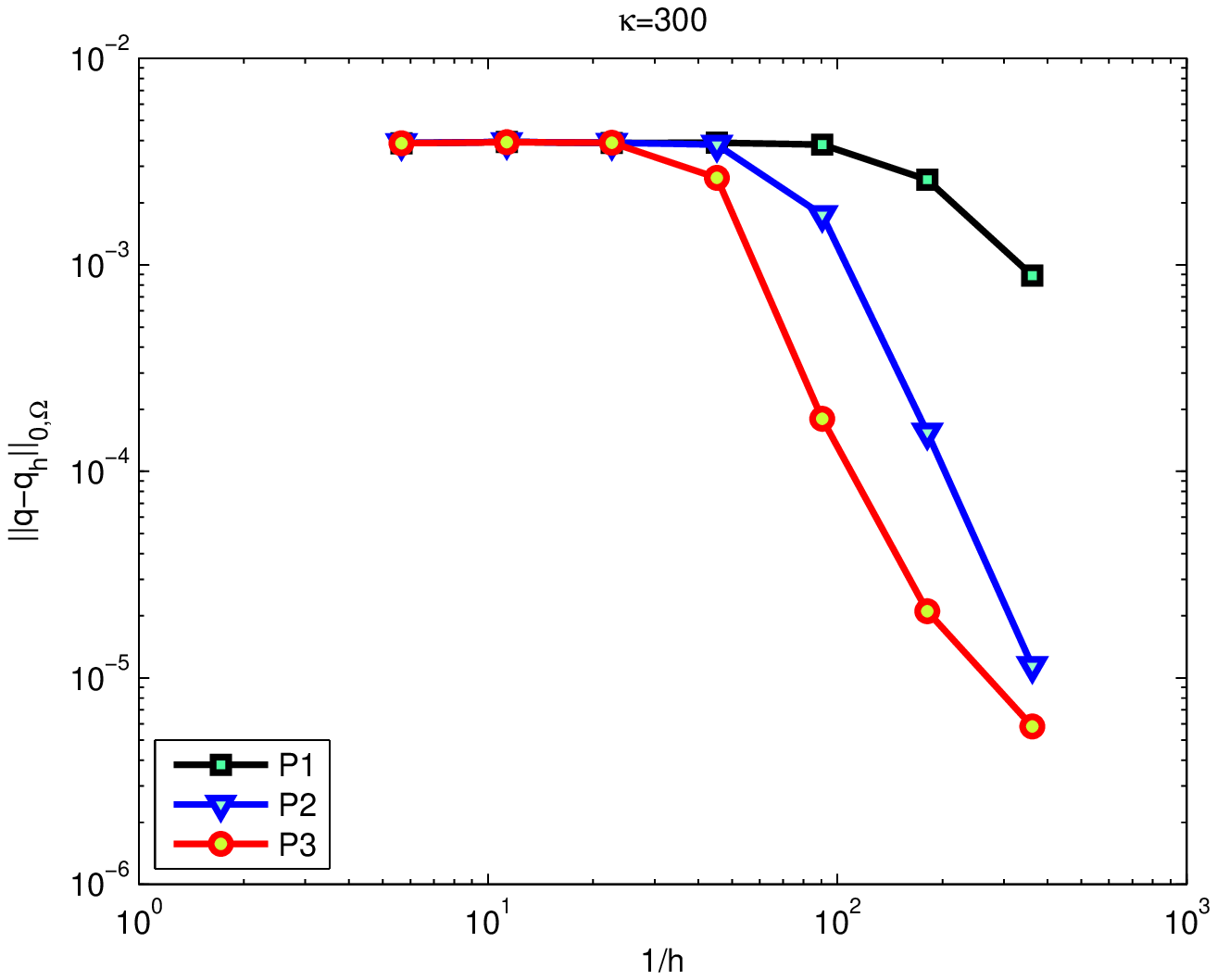}
    \caption{\small Errors of $\|u-u_h\|_{0,\Omega}$, $\|\Bq -
\Bq_h\|_{0,\Omega}$ and $\|u-\hat{u}_h\|_{0,\partial \Ct_h}$ for
$\kappa = 100$ by HDG-P1, HDG-P2 and HDG-P3 approximations (left,
top bottom); Errors of $\|u-\hat{u}_h\|_{0,\partial \Ct_h}$,
$\|u-u_h\|_{0,\Omega}$ and $\|\Bq - \Bq_h\|_{0,\Omega}$ for $\kappa
= 300$ by different polynomial approximations (right, top
bottom).}\label{fig1}
\end{figure}

\begin{figure}[htbp]
\centering
    \includegraphics[width=2.7in]{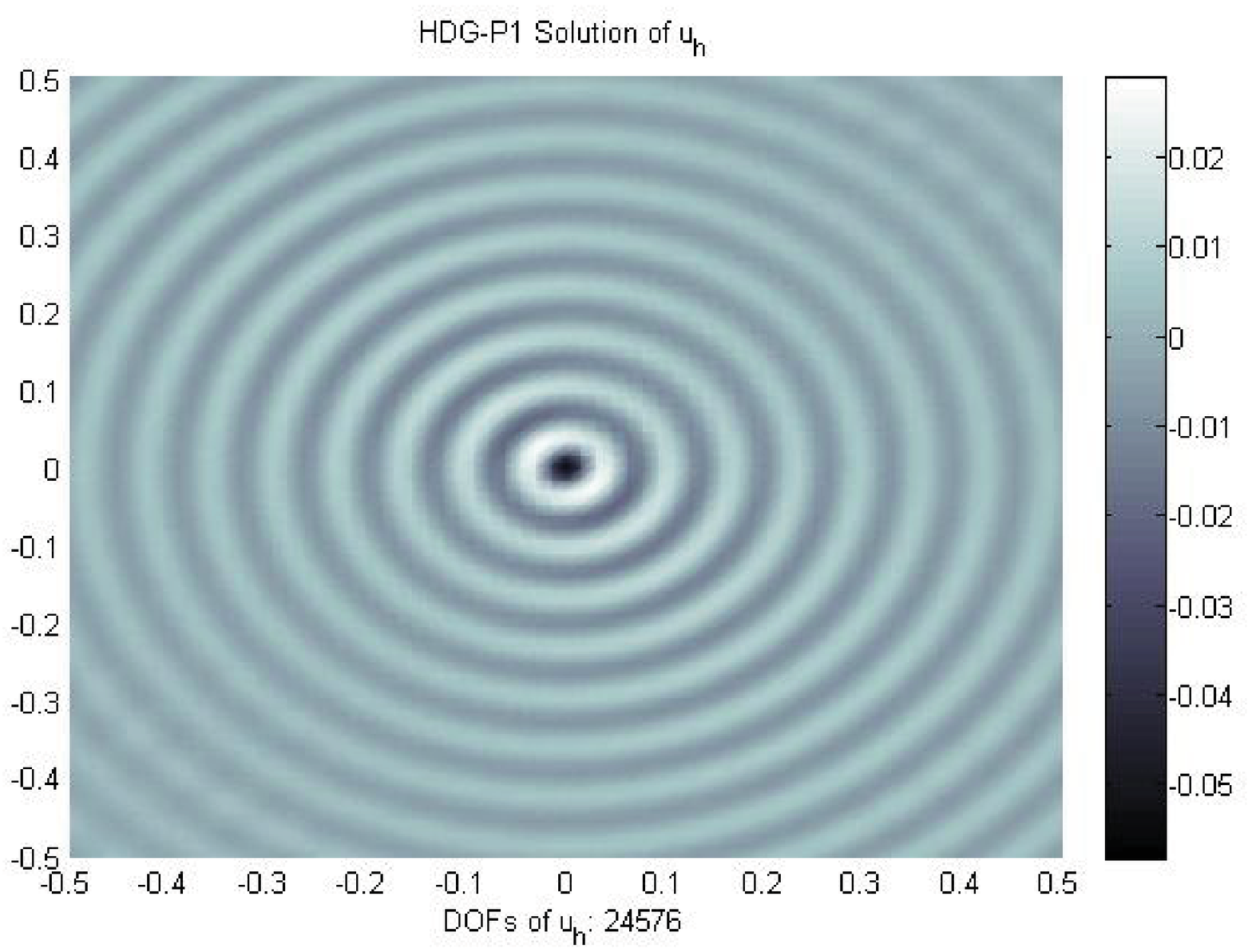}
    \includegraphics[width=2.7in]{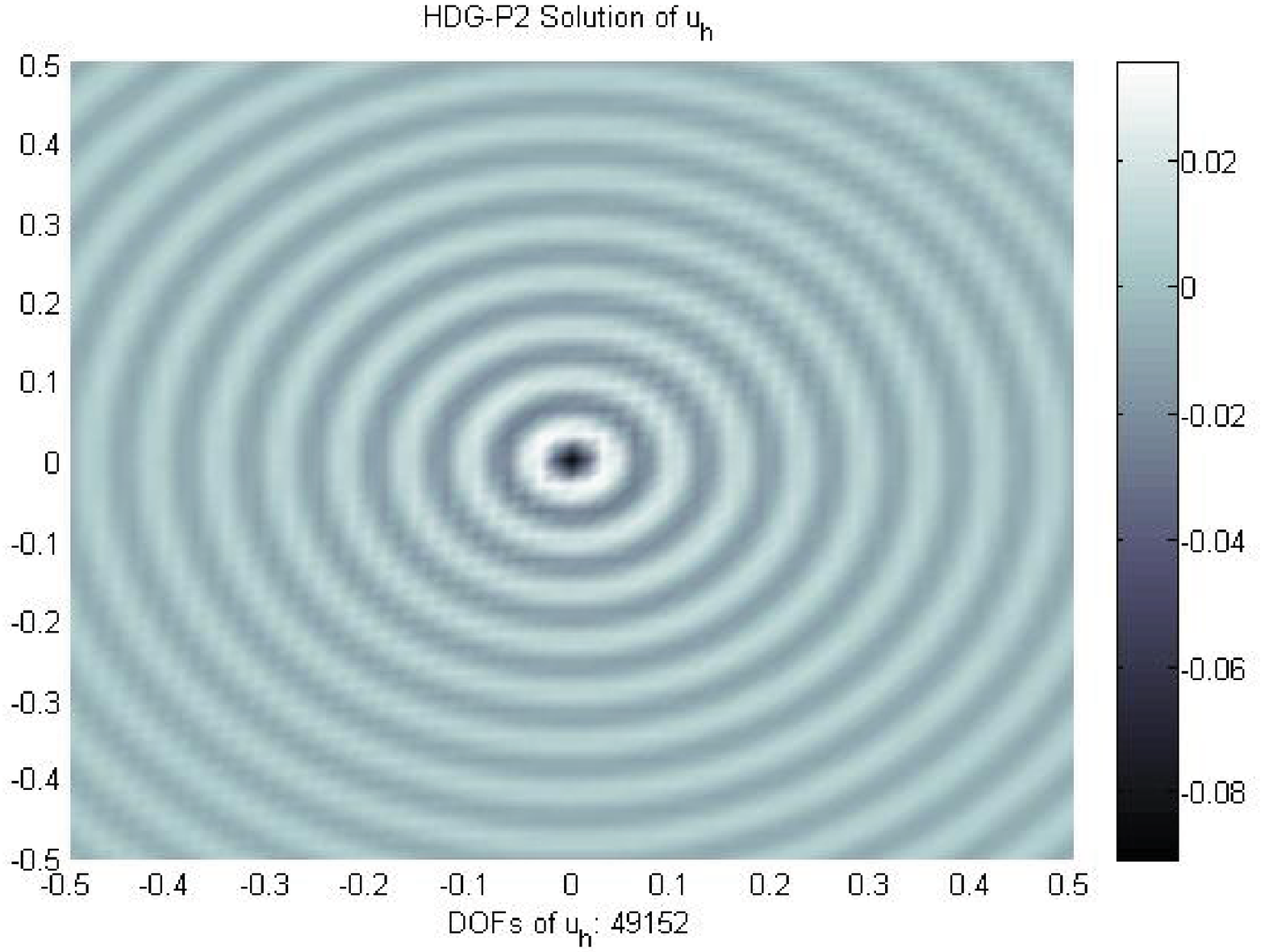}
    \includegraphics[width=2.7in]{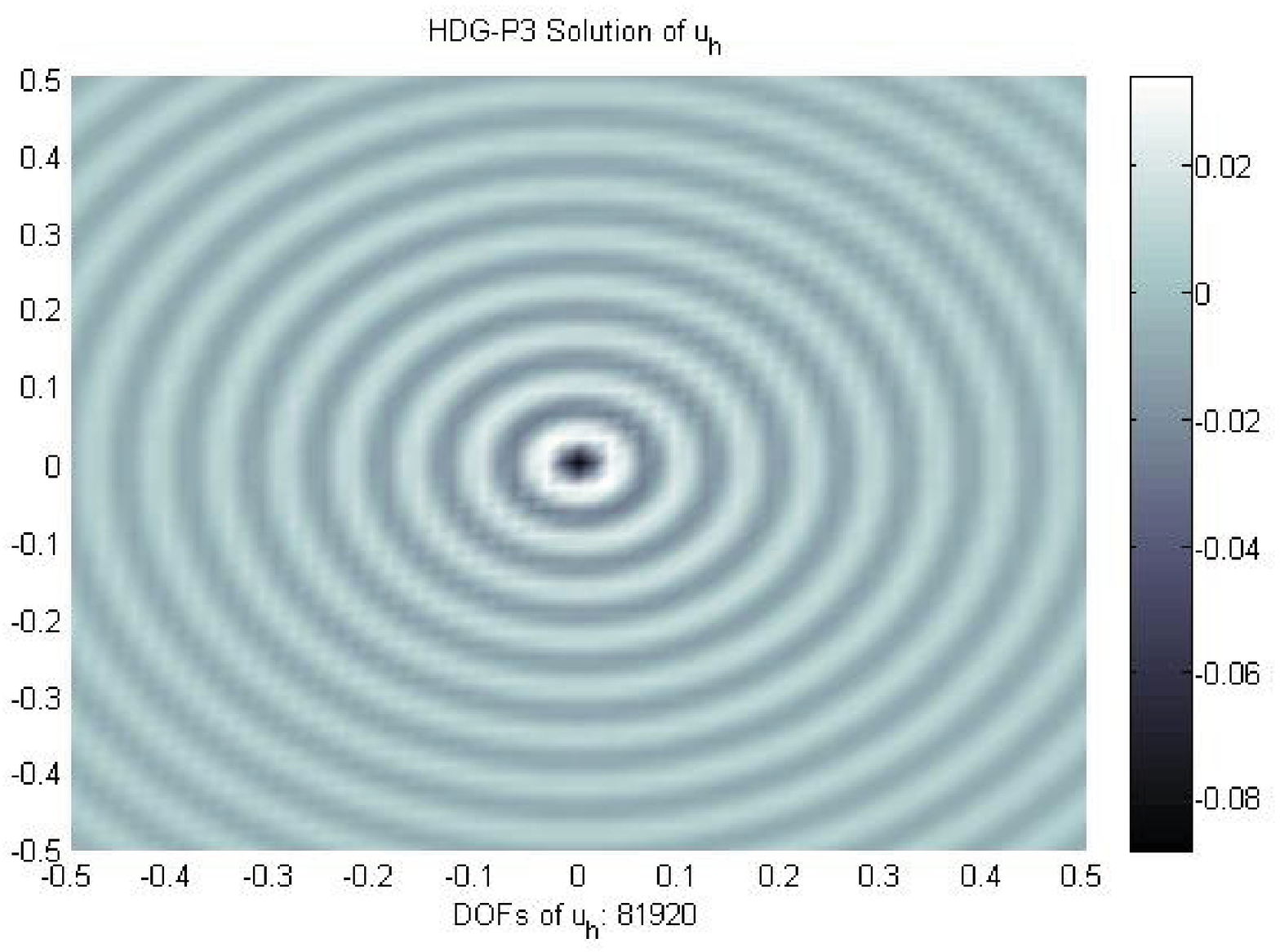}
    \includegraphics[width=2.7in]{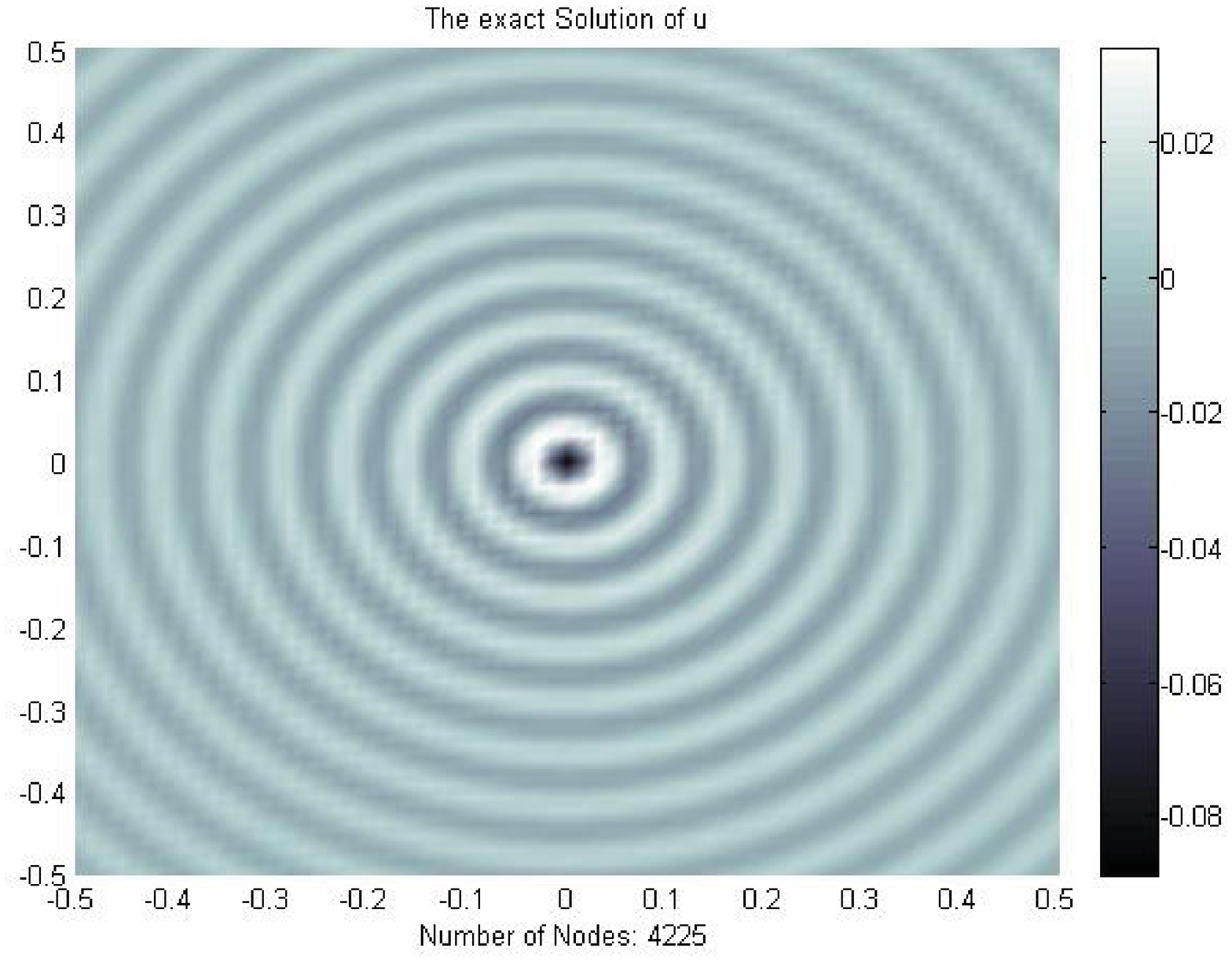}
    \caption{\small Surface plot of the imaginary parts of the HDG-P1, HDG-P2, HDG-P3 solutions of $u_h$ and the exact solution for $\kappa = 100$ with mesh size
    $h \approx 0.022$.}\label{fig2}
\end{figure}


\begin{figure}[htbp]
\centering
    \includegraphics[width=2.7in]{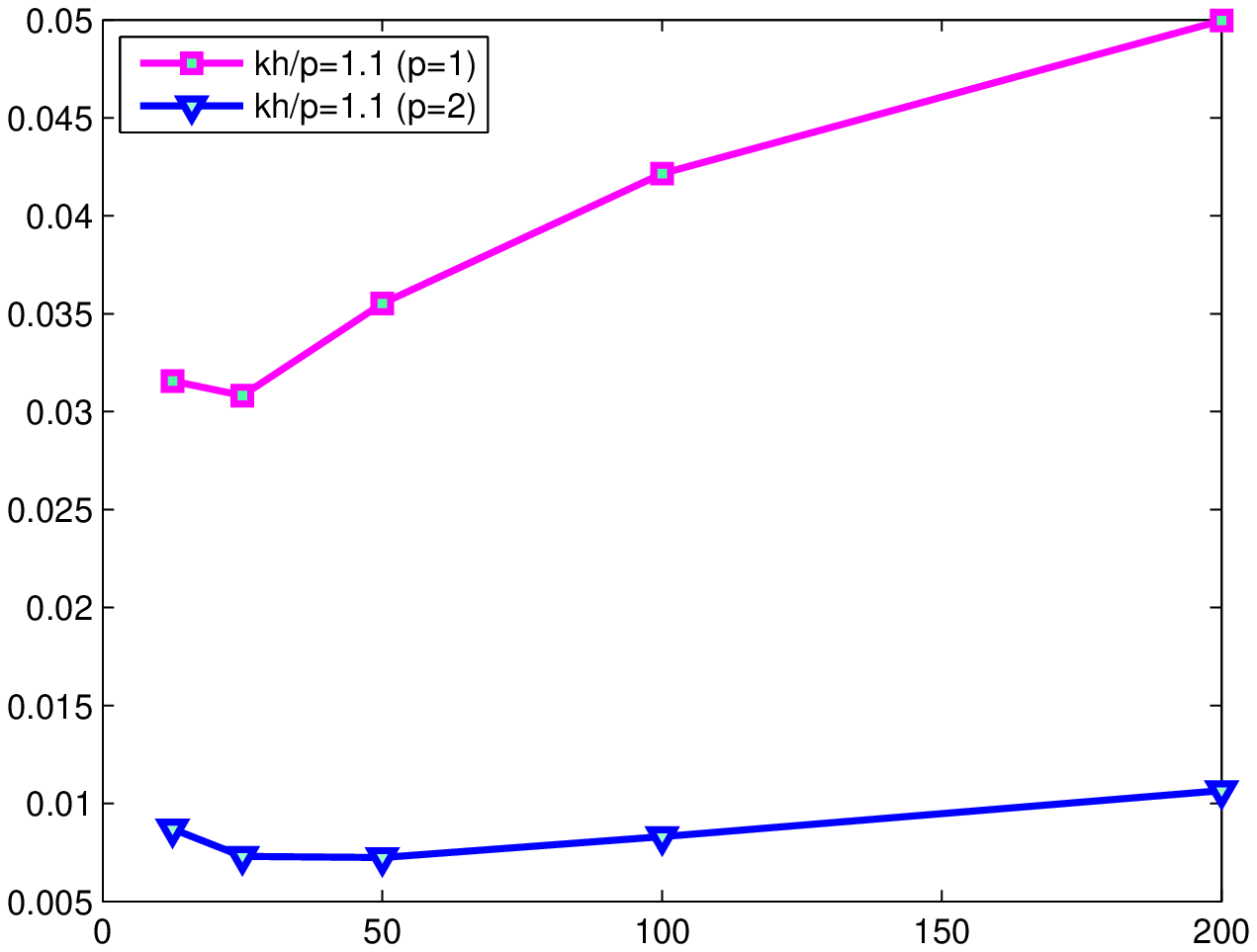}
    \includegraphics[width=2.7in]{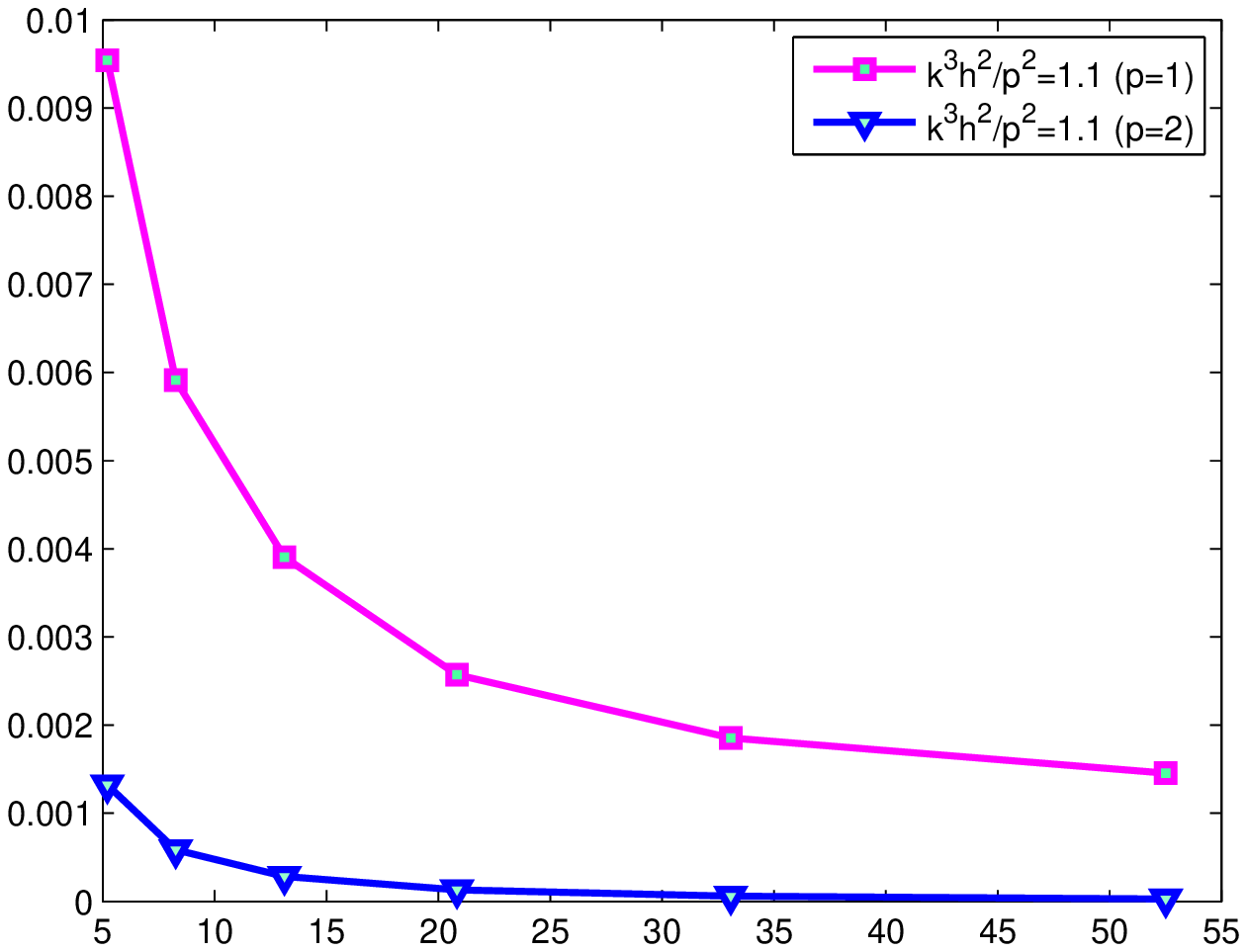}
    \caption{\small Errors of $\|u-\hat{u}_h\|_{0,\partial \Ct_h}$. Left: $\frac{\kappa h}{p} =1.1$. Right: $\frac{\kappa^3 h^2}{p^2} =1.1$.}\label{fig-kh}
\end{figure}

\begin{figure}[htbp]
\centering
    \includegraphics[width=2.7in]{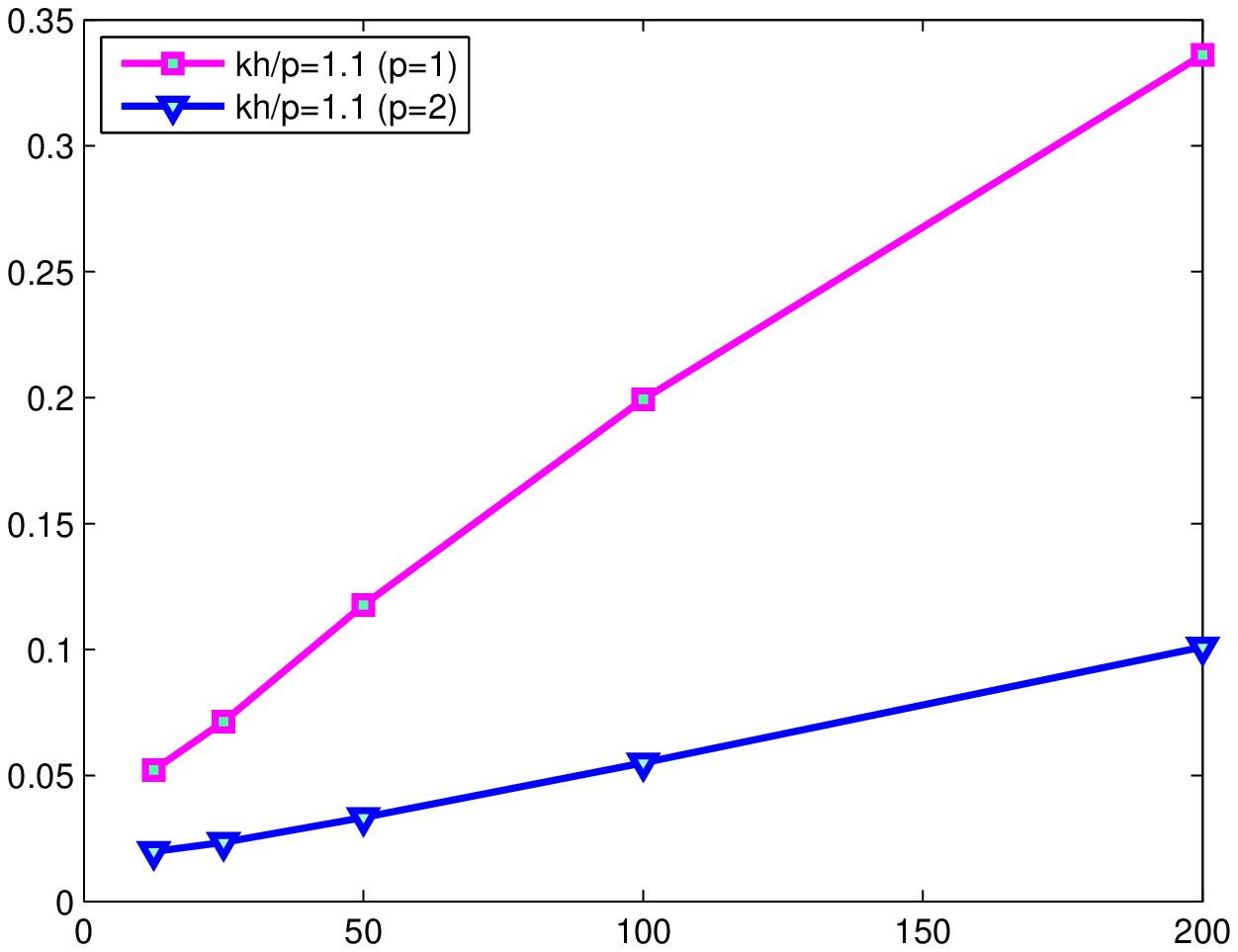}
    \includegraphics[width=2.7in]{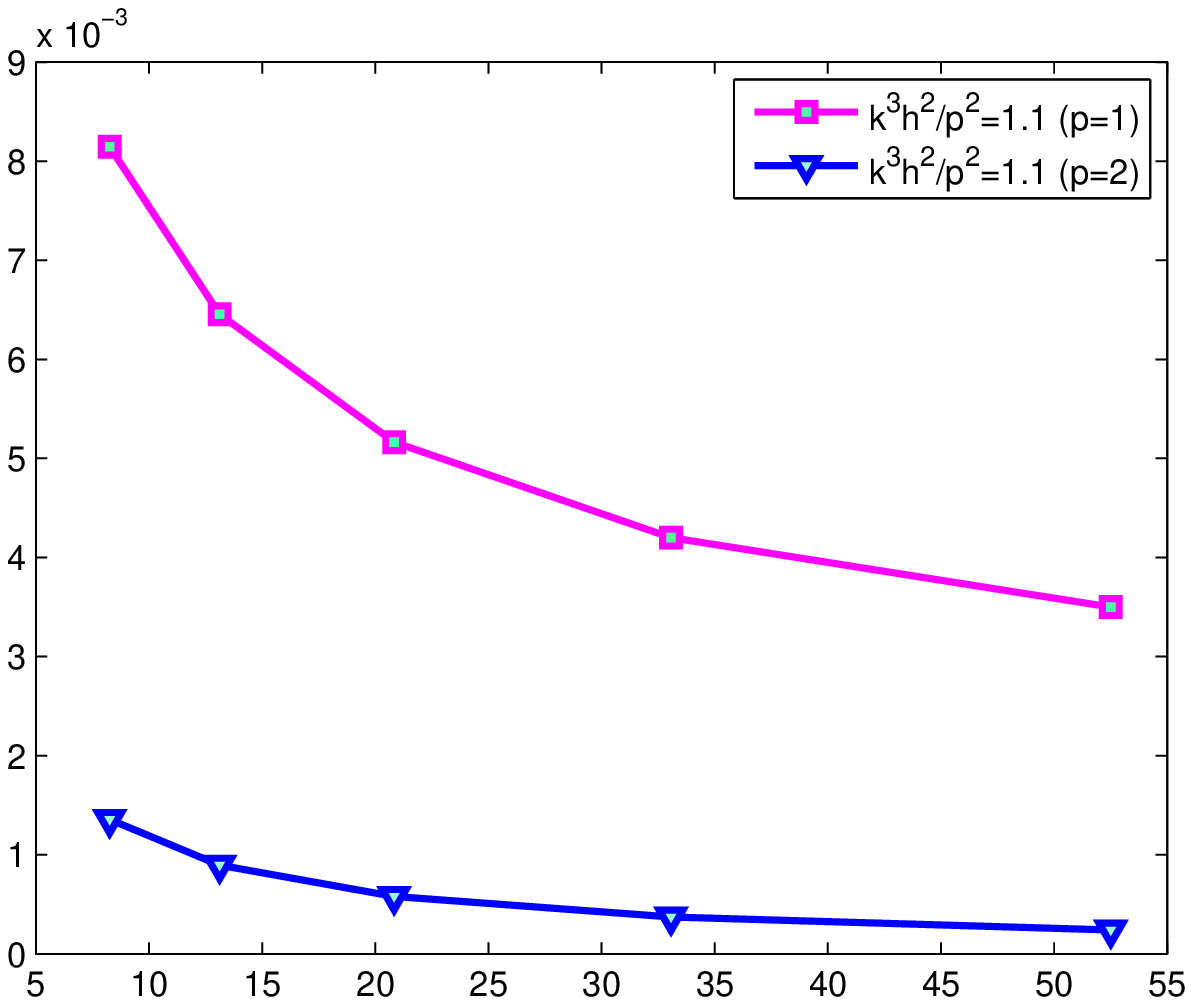}
    \includegraphics[width=2.7in]{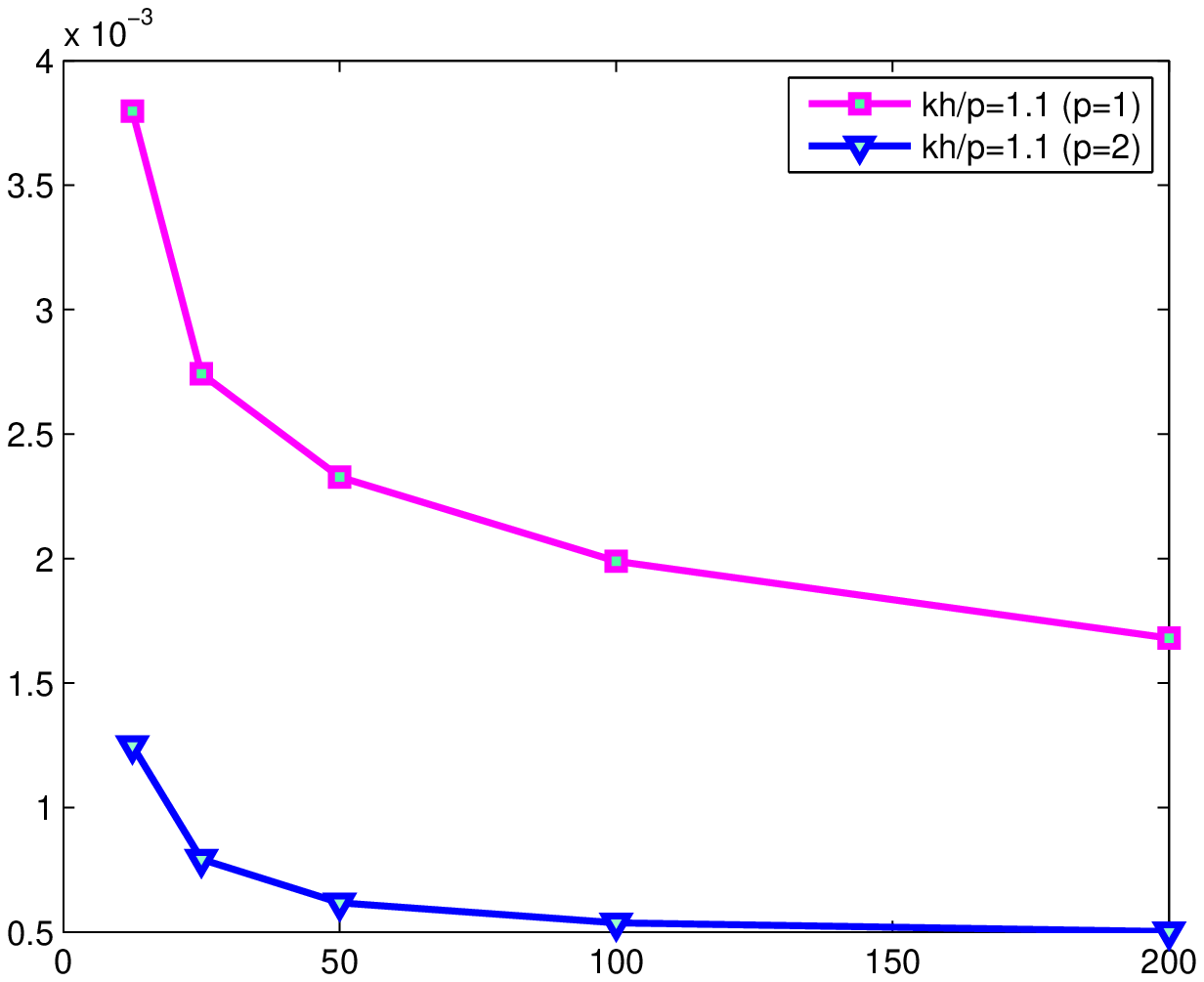}
    \caption{\small $\kappa \|\Bq-\Bq_h\|_{0,\Omega}$: $\frac{\kappa h}{p} =1.1$ (top left), $\frac{\kappa^3 h^2}{p^2} =1.1$ (top right).
    $\|u-u_h\|_{0,\Omega}$: $\frac{\kappa h}{p} =1.1$ (bottom).}\label{fig-kh-q}
\end{figure}

\begin{figure}[htbp]
\centering
    \includegraphics[width=2.7in]{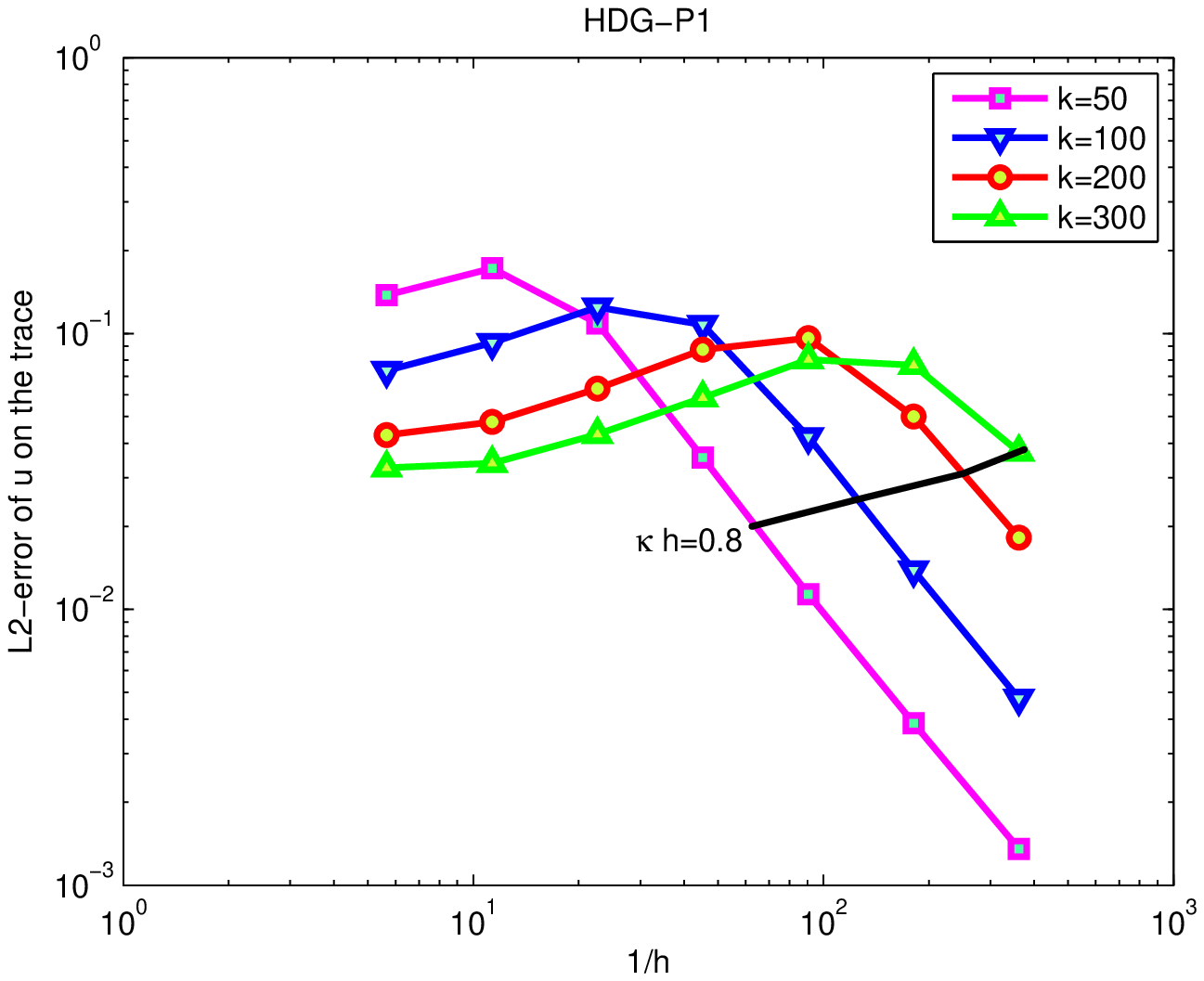}
    \includegraphics[width=2.7in]{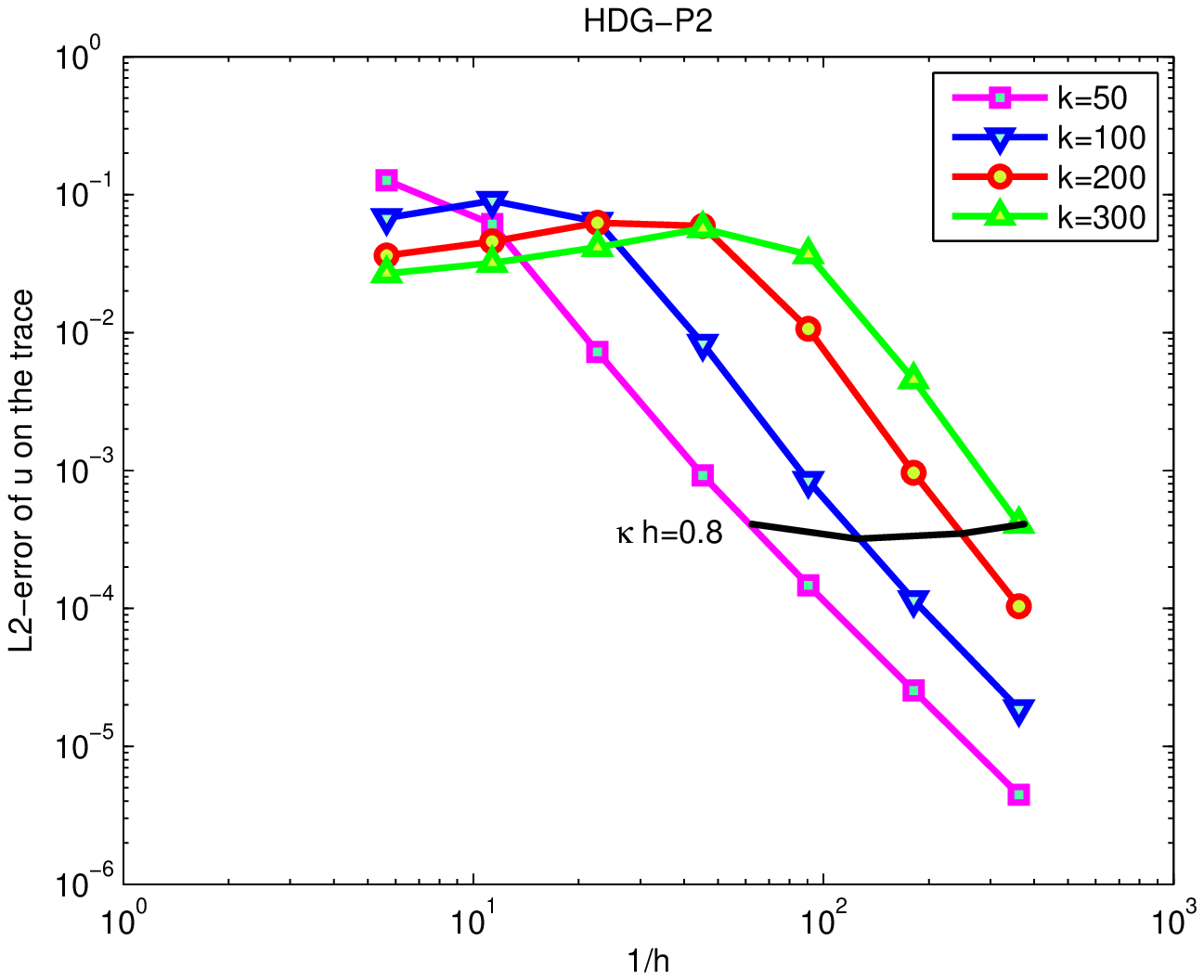}
    \includegraphics[width=2.7in]{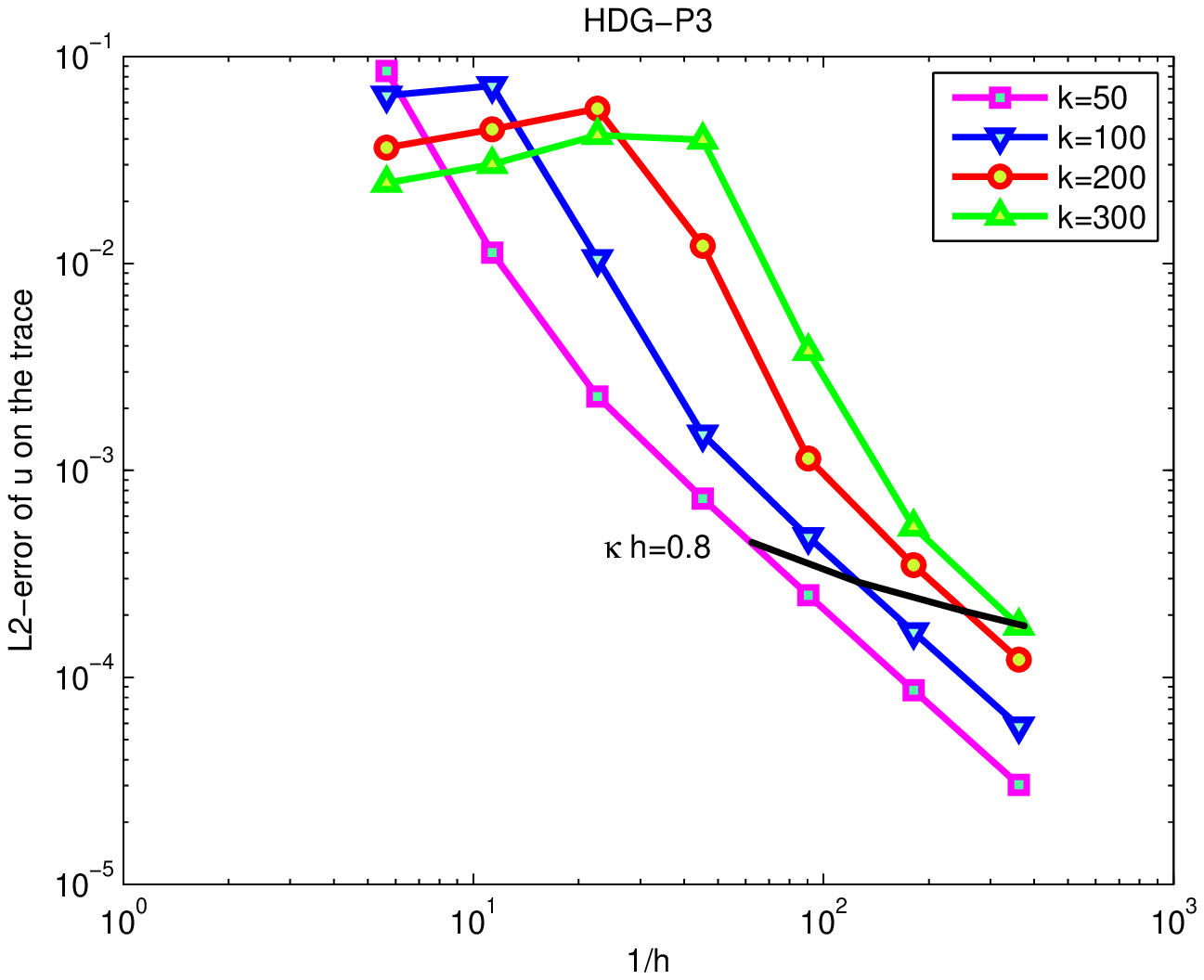}
    \caption{\small The error of $\|u-\hat{u}_h\|_{0,\partial \Ct_h}$ for
$\kappa = 50, 100, 200, 300$ by HDG-P1, HDG-P2 and
HDG-P3.}\label{fig3}
\end{figure}

The left graph of Figure \ref{fig-kh} displays the error
$\|u-\hat{u}_h\|_{0,\partial \Ct_h}$ for fixed $\frac{\kappa h}{p}
=1.1$. It shows that the error $\|u-\hat{u}_h\|_{0,\partial \Ct_h}$
can not be controlled by $\frac{\kappa h}{p}$ and increases with
$\kappa$, which indicates that there is pollution error in the total
error. The right graph of Figure \ref{fig-kh} displays the same
error with the mesh size satisfying $\frac{\kappa^3 h^2}{p^2} =1.1$
for different $\kappa, h$ and $p$. We observe that under this mesh
condition, the error $\|u-\hat{u}_h\|_{0,\partial \Ct_h}$ does not
increase with $\kappa$. The top two graphs of Figure \ref{fig-kh-q}
shows the similar property for the error $\kappa
\|\Bq-\Bq_h\|_{0,\Omega}$. From the bottom graph of Figure
\ref{fig-kh-q} we can also find that the error
$\|u-u_h\|_{0,\Omega}$ does not increase with $\kappa$ only under
the mesh condition $\frac{\kappa h}{p} =1.1$.

Next we verify the convergence properties of the HDG method for
different wave numbers by piecewise P1, P2 and P3 approximations
respectively. In Figure \ref{fig3}, the error
$\|u-\hat{u}_h\|_{0,\partial \Ct_h}$ of HDG-P1, HDG-P2 and HDG-P3
solutions for different wave numbers always oscillates on the coarse
meshes, and then decays on fine meshes. For HDG-P1 solution,
$\|u-\hat{u}_h\|_{0,\partial \Ct_h}$ grows with $\kappa$ along the
line $\kappa h = 0.8$ for $\kappa \leq 300$. By contrast, for
$\kappa \leq 300$, this error does not increase significantly along
the line $\kappa h = 0.8$ by HDG-P2, and decreases along the line
$\kappa h = 0.8$ by HDG-P3. This also means that the pollution error
can be reduced by high order polynomial approximations. Figure
\ref{fig4} shows the convergence property of $\|u-u_h\|_{0,\Omega}$
for different wave numbers. For $\kappa \leq 300$, along the line
$\kappa h =0.8$,  the error $\|u-u_h\|_{0,\Omega}$ of HDG-P1 and
HDG-P2 solutions stays stable, and for HDG-P3 solution, this error
decreases as $\|u-\hat{u}_h\|_{0,\partial \Ct_h}$. Similar
phenomenon can also be observed for the error $\|\Bq -
\Bq_h\|_{0,\Omega}$ by different polynomial approximations.

\begin{figure}[htbp]
\centering
    \includegraphics[width=2.7in]{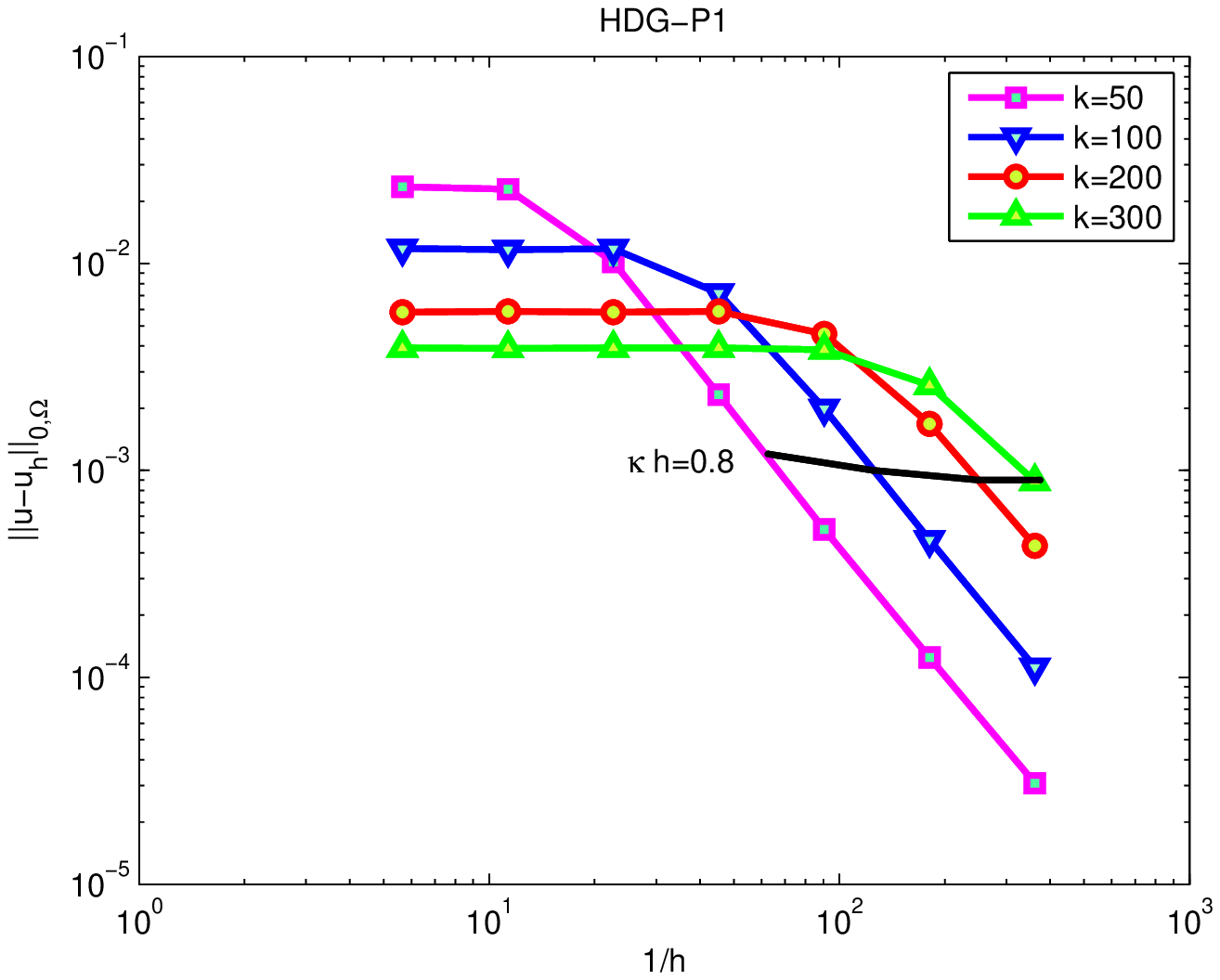}
    \includegraphics[width=2.7in]{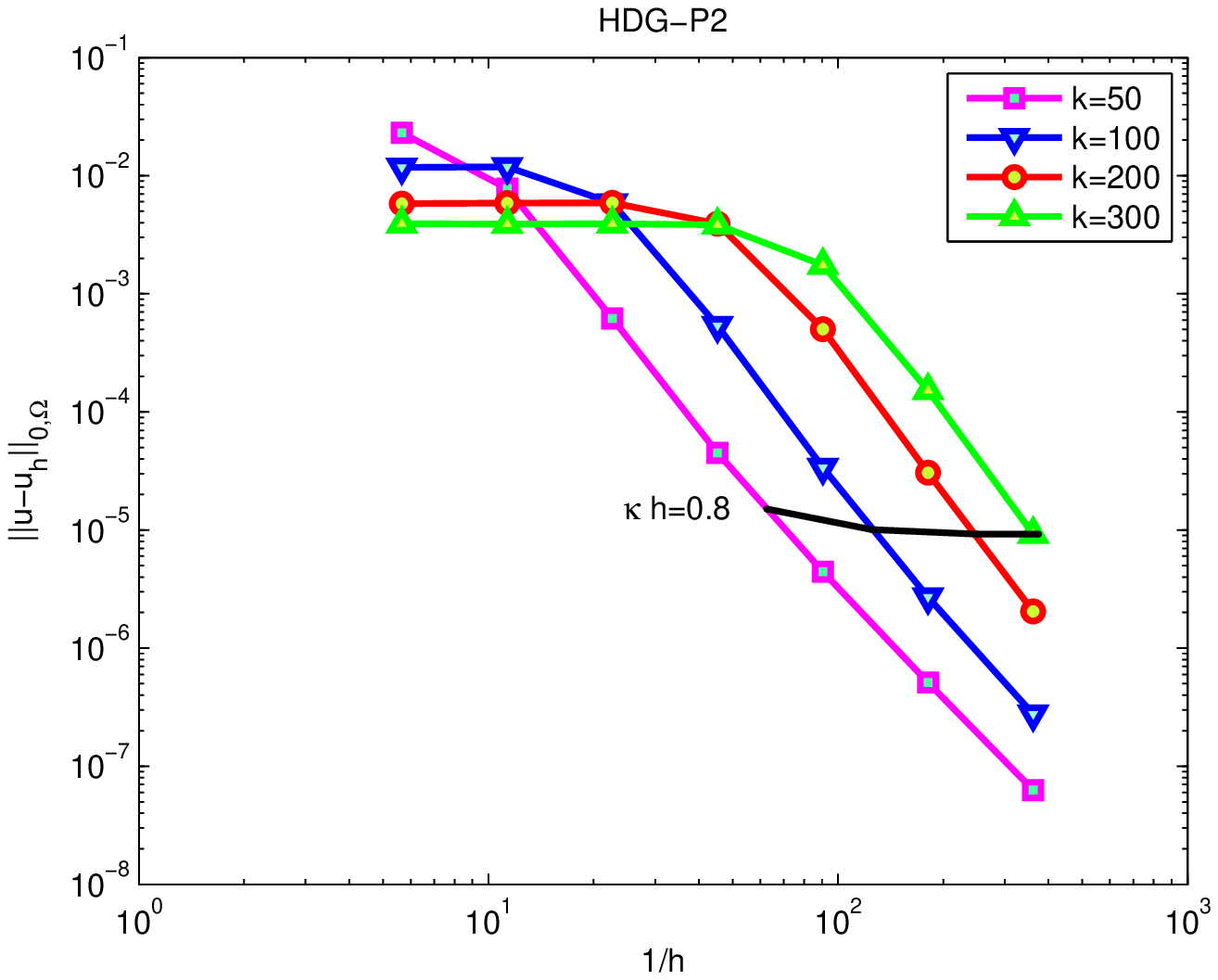}
    \includegraphics[width=2.7in]{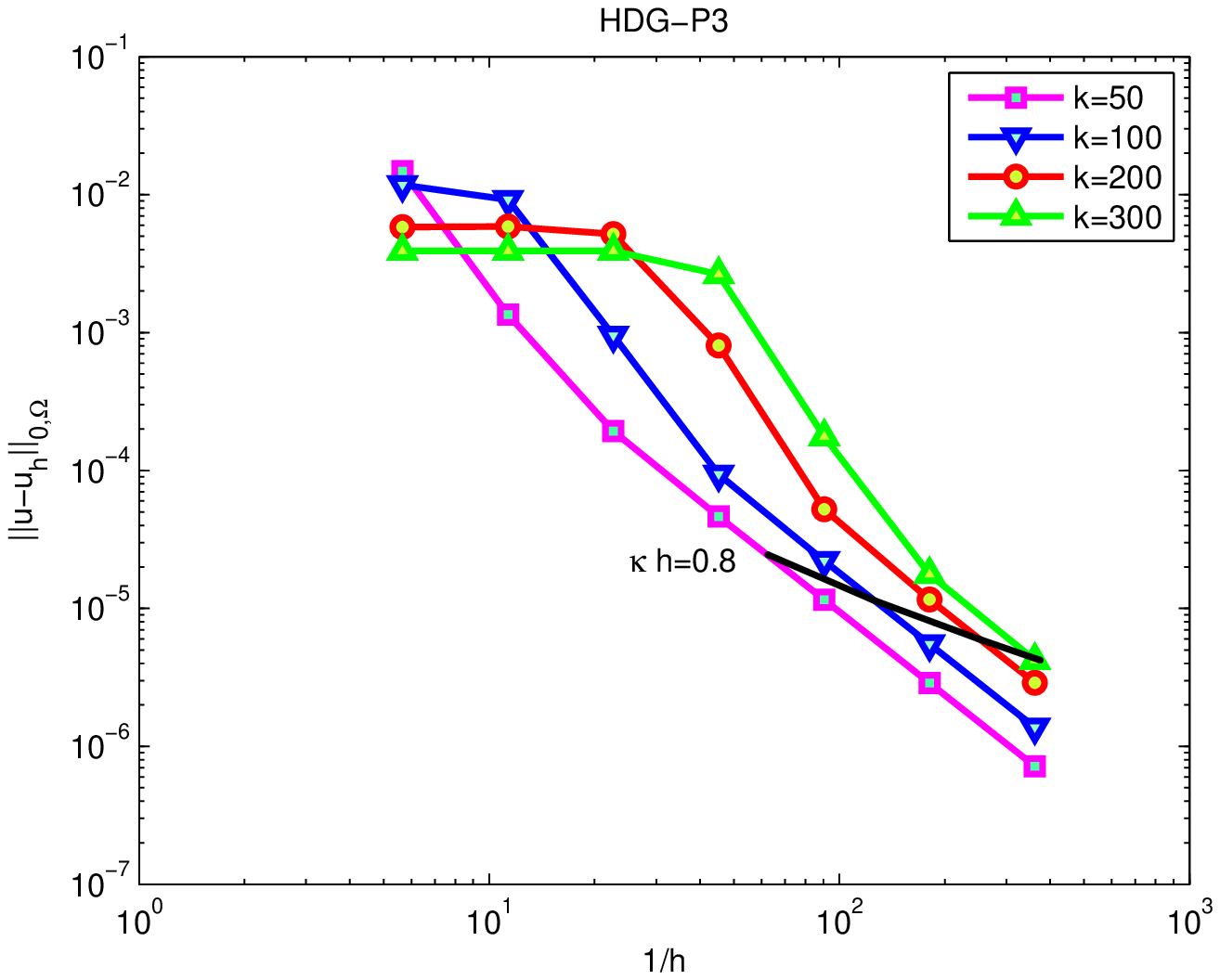}
    \caption{\small The error of $\|u-u_h\|_{0,\Omega}$ for
$\kappa = 50, 100, 200, 300$ by HDG-P1, HDG-P2 and
HDG-P3.}\label{fig4}
\end{figure}

\smallskip

For more detailed comparison between HDG methods with different
polynomial order approximations. We consider the problem with wave
number $\kappa = 200$. The traces of imaginary part of the HDG
solution $\hat{u}_h$ with piecewise P1, P2 and P3 approximations in
the $xz$-plane with mesh sizes $h \approx 0.022, 0.0055$, and the
trace of imaginary part of the exact solution, are both plotted in
Figure \ref{fig5}. On the coarse mesh with $h\approx  0.022$, the
shapes of HDG-P2 and HDG-P3 solutions are roughly the same as the
exact solution, while the shape of HDG-P1 solution does not match
the exact solution well. But on the fine mesh with $h\approx
0.0055$, the shapes of HDG solutions match well and even better for
high order polynomial approximations. Then we can observe that
although the phase error appears in the case of coarse meshes and
low order polynomial approximation, it can be reduced in the fine
meshes or by high order polynomial approximations.

\begin{figure}[htbp]
\centering
    \includegraphics[width=2.7in]{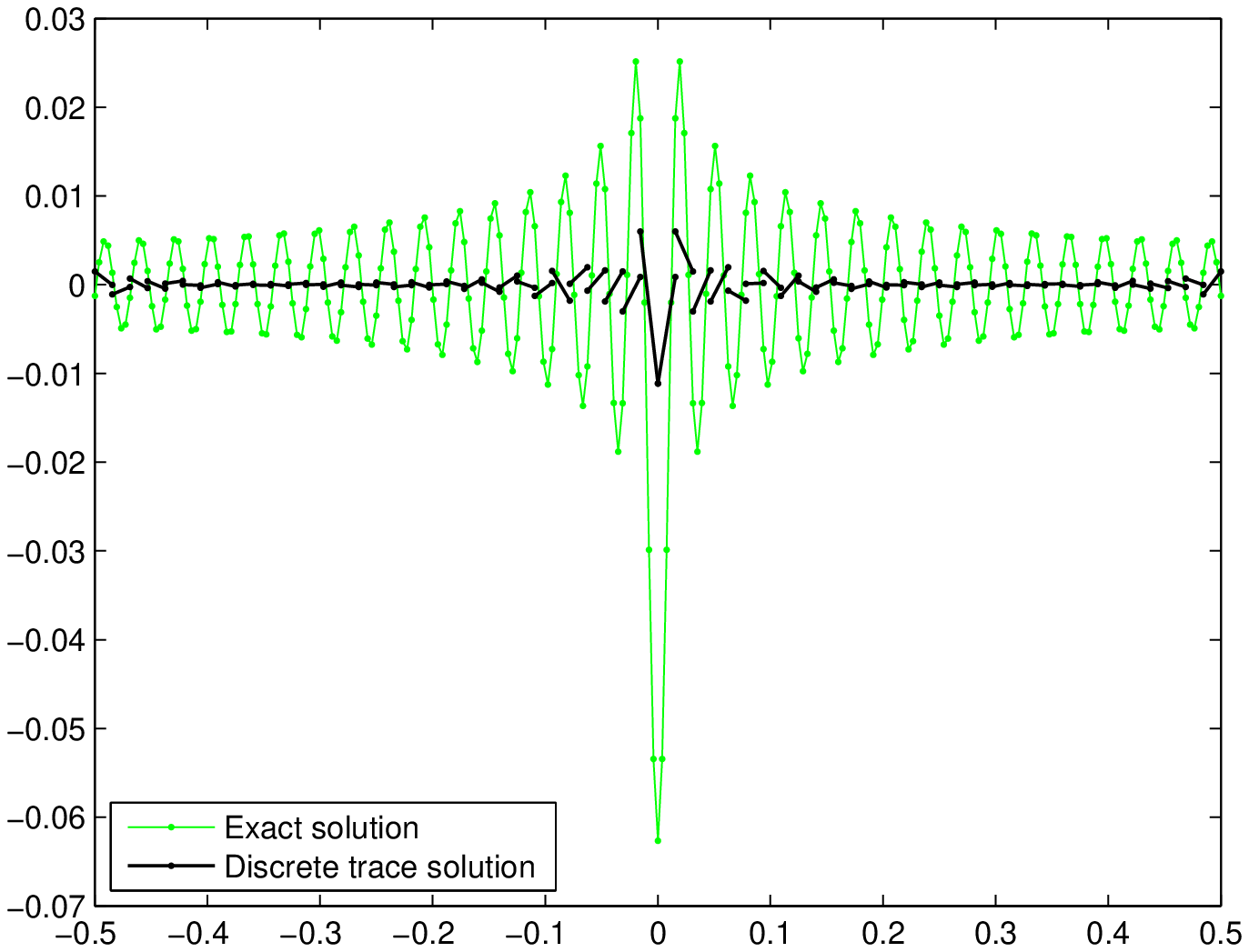}
    \includegraphics[width=2.7in]{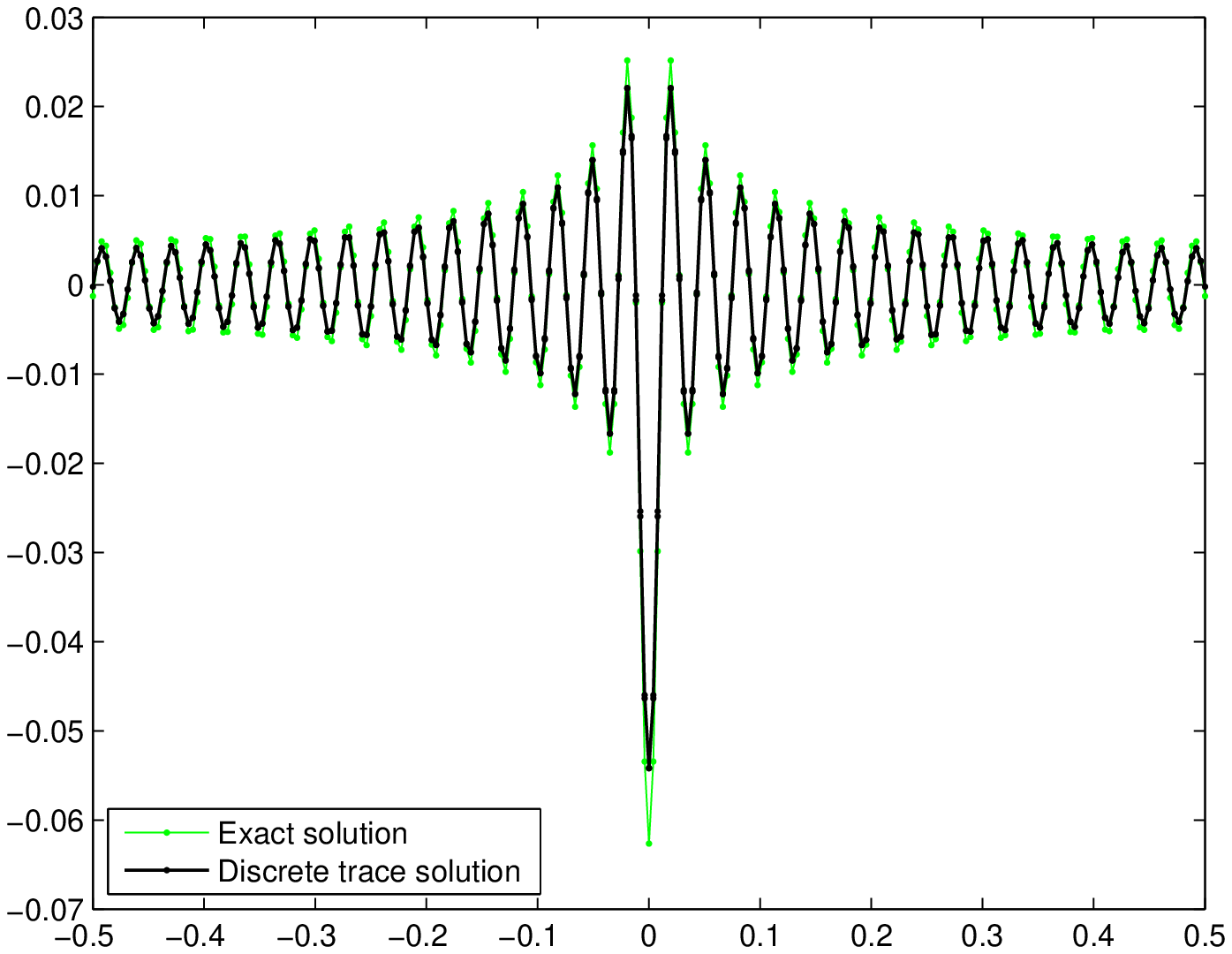}
    \includegraphics[width=2.7in]{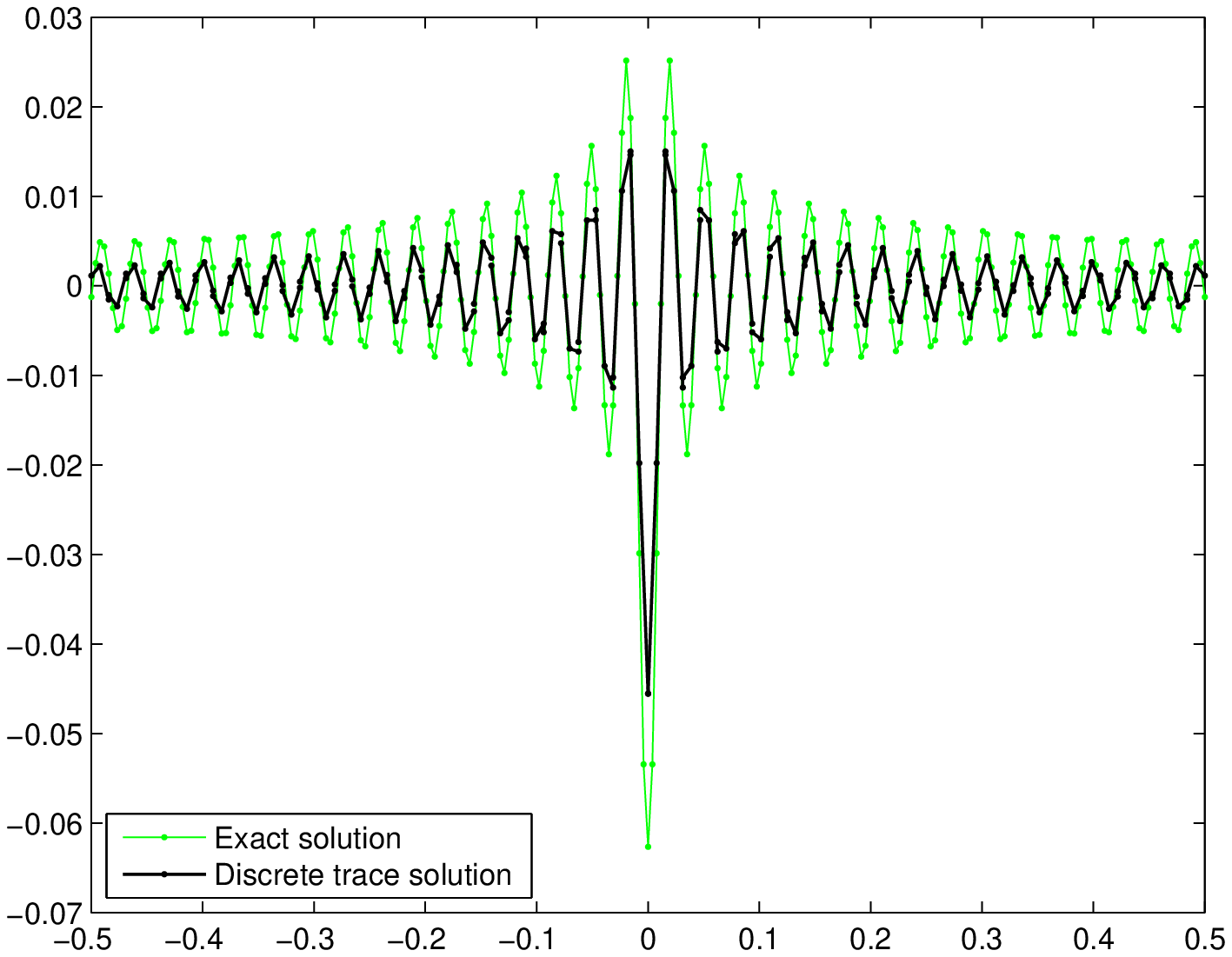}
    \includegraphics[width=2.7in]{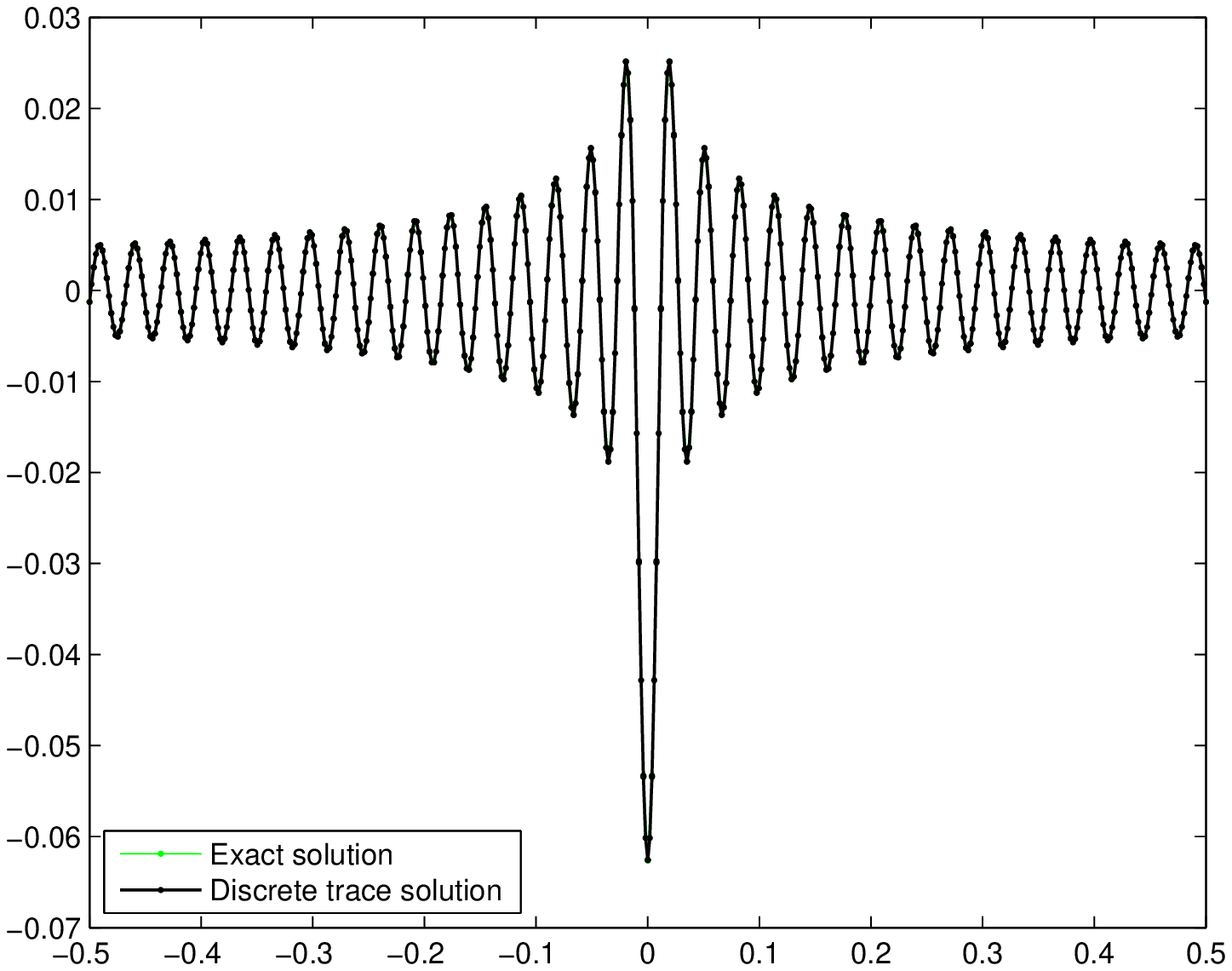}
    \includegraphics[width=2.7in]{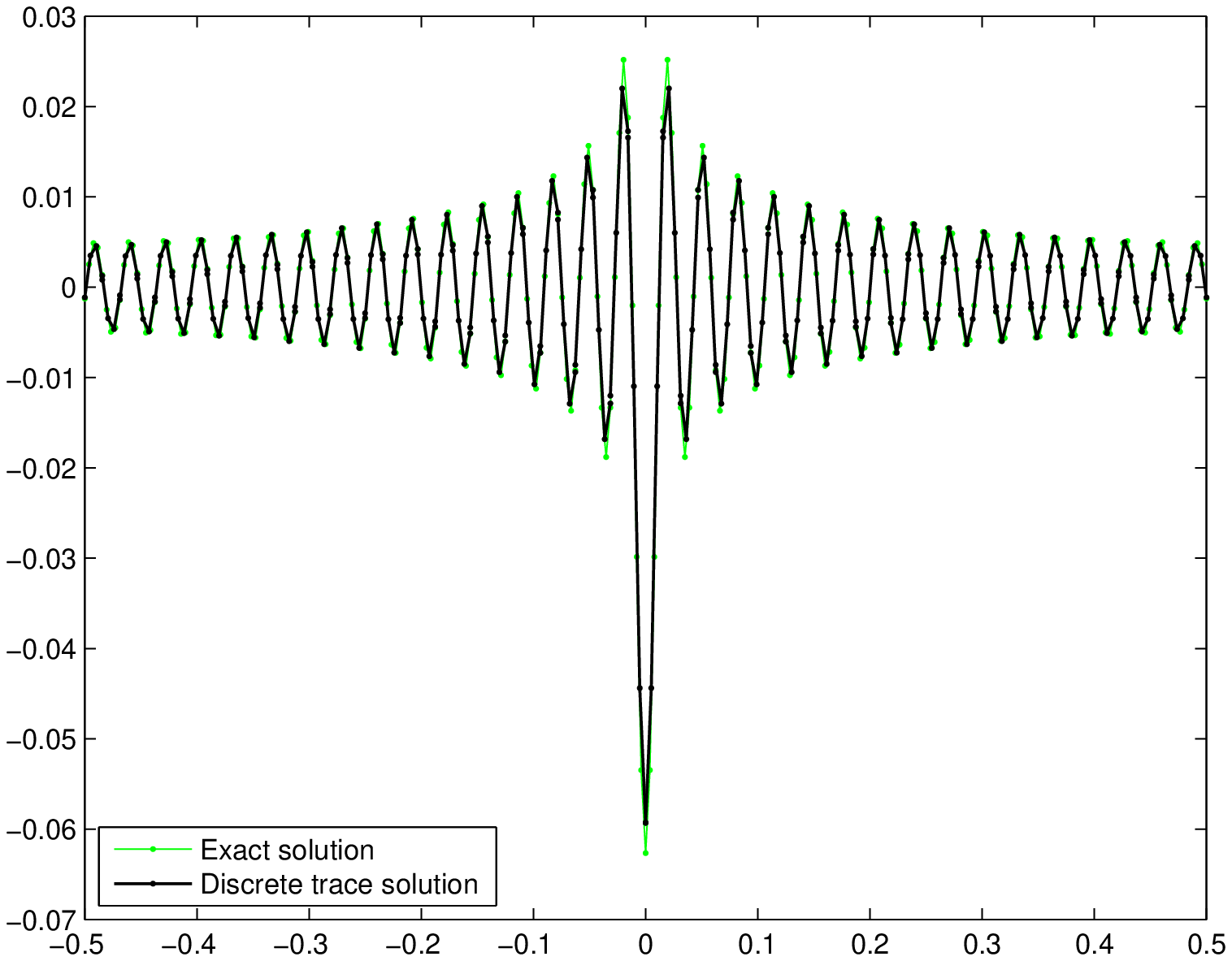}
    \includegraphics[width=2.7in]{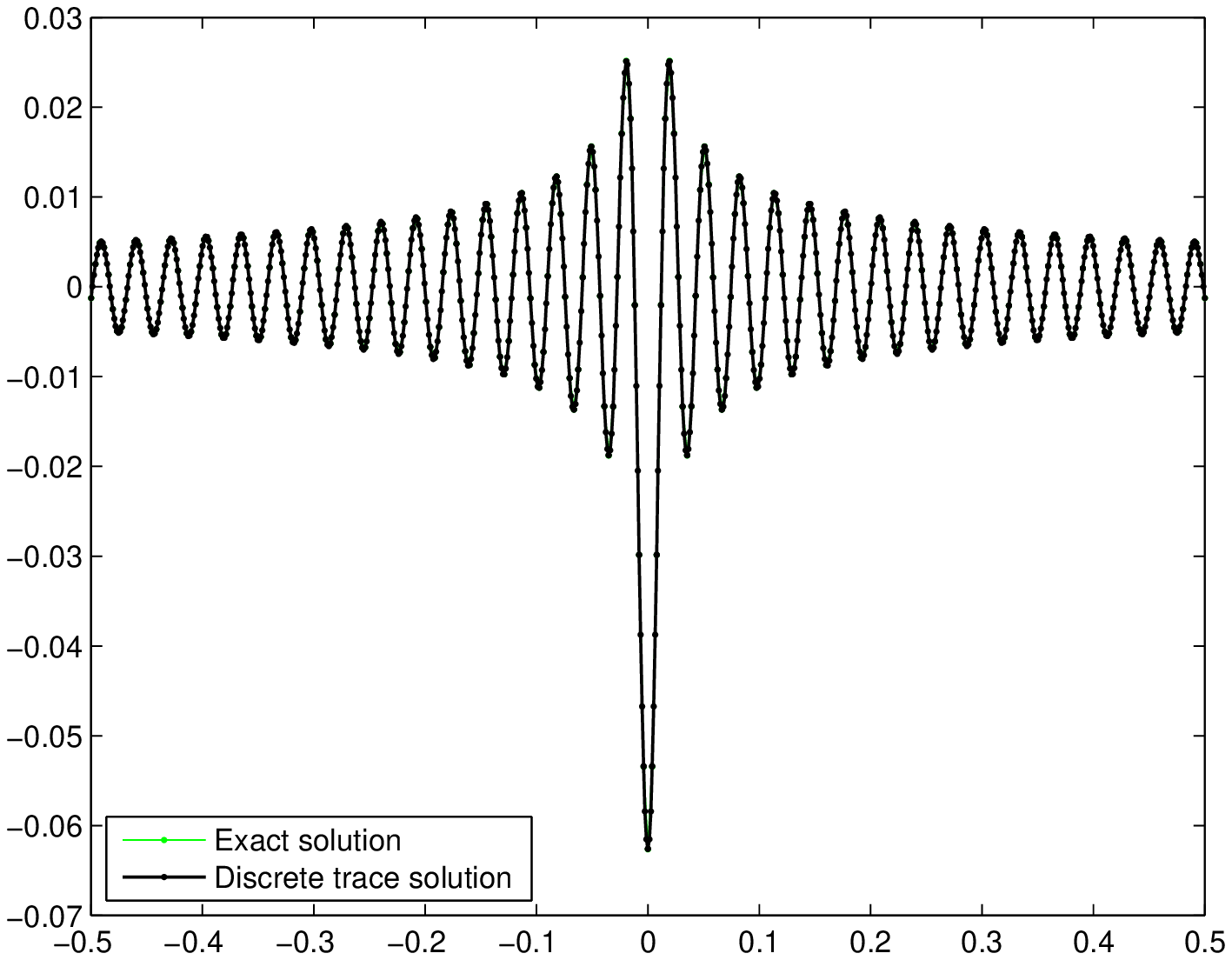}
    \caption{\small The traces of imaginary part of the HDG
solution $\hat{u}_h$ by HDG-P1, HDG-P2 and HDG-P3 (top downbottom) with
mesh sizes $h \approx 0.022, 0.0055 $ (left, right). The trace of
imaginary part of the exact solution is plotted in the green
lines.}\label{fig5}
\end{figure}


\begin{thebibliography}{bib}

\bibitem{Adams}
R. Adams, Sobolev Spaces, Academic Press, New York, 1975.

\bibitem{Amara}
M. Amara, R. Djellouli, and C. Farhat, Convergence analysis of
a discontinuous Galerkin method with plane waves and Lagrange
multipliers for the solution of Helmholtz problems, {\bf 47} (2009), pp.
1038--1066.

\bibitem{Melenk1}
I. Babuska, J.M. Melenk, The partition of unity method, Int. J. Numer. Methods Engrg, {\bf 40} (1997), pp. 727--758 .

\bibitem{Babuska}
I. Babuska and M. Suri, The hp version of the finite element method with quasi-uniform
meshes, RAIRO, Math. Mod. Numer. Anal, {\bf 21} (1987), pp. 199--238 .

\bibitem{Babu}
I. Babu\v{s}ka and S.A. Sauter, Is the pollution effect of the
FEM avoidable for the Helmholtz equation considering high wave
numbers?, SIAM Rev., {\bf 42} (2000), pp. 451--484.

\bibitem{Babu-1}
I. Babu\v{s}ka, U. Banerjee, and J. Osborn, Survey of meshless
and generalized finite element methods: A unified approach, Acta
Numer., {\bf 12} (2003), pp. 1--125.

\bibitem{Babu-0}
I. Babu\v{s}ka, U. Banerjee, and J. Osborn, Generalized finite
element method - main ideas, results, and perspective, Internat. J.
Comput. Methods, {\bf 1} (2004), pp. 67--103.

\bibitem{CIP-D}
E. Burman and A. Ern, Continuous interior penalty hp-finite element methods for advection and advection-diffusion equations, Math. Comp, {\bf 76} (2007), pp. 1119--1140 .

\bibitem{CD}
O. Cessenat, B. Despres, Application of an ultra weak variational formulation of elliptic PDEs to the two-dimensional Helmholtz problem, SIAM J. Numer. Anal. {\bf 35} (1998), pp. 255--299 .

\bibitem{Chang}
C.L. Chang, A least-squares finite element method for the
Helmholtz equation, Comput. Methods Appl. Mech. Engrg., {\bf 83} (1990),
pp. 1--7.

\bibitem{unified}
B. Cockburn, J. Gopalakrishnan and R. Lazarov, Unified hybridization of discontinuous Galerkin, mixed, and continuous Galerkin methods for second order elliptic problems, SIAM J. Numer. Anal,
{\bf 47} (2009), pp. 1319--1365 .

\bibitem{Feng}
P. Cummings and X. Feng, Sharp regularity coefficient estimates for complex-valued acoustic
and elastic Helmholtz equations, Math. Mod. Appl. Sci, {\bf 16} (2006),pp. 139--160.


\bibitem{Engquist}
B. Engquist and O. Runborg, Computational high frequency wave
propagation, Acta Numer., {\bf 12} (2003), pp. 181--266.

\bibitem{Wu}
X. Feng and H. Wu, Discontinuous Galerkin methods for the Helmholtz equation with large wave number, SIAM J. Numer. Anal, {\bf 47} (2009), pp. 2872--2896.

\bibitem{Wuhp}
X. Feng and H. Wu, hp-discontinuous Galerkin methods for the Helmholtz equation with large wave number, Math. Comp, {\bf 80} (2011), pp. 1997--2024.

\bibitem{feng}
X. Feng and Y. Xing, Absolutely stable local discontinuous Galerkin methods for the Helmholtz equation with large wave number, Math. Comp., in press, 2012.



\bibitem{GM2011}
R. Griesmair and P. Monk, Error analysis for a hybridizable
discontinuous Galerkin method for the Helmholtz equation, J. Sci.
Comp, {\bf 49} (2011), pp. 291--310.

\bibitem{PG}
P. Grisvard, Singularities in Boundary Value Problems, Rech. Math. Appl. {\bf 22}, Masson, Paris, 1992.

\bibitem{Harari-Hughes}
I. Harari and T.J.R. Hughes, Analysis of continuous
formulations underlying the computation of time-harmonic acoustics
in exterior domains, Comput. Methods Appl. Mech. Engrg., {\bf 97} (1992),
pp. 103--124.

\bibitem{Hiptmair}
R. Hiptmair, A. Moiola, and I. Perugia, Plane wave
discontinuous Galerkin methods for the 2D Helmholtz equation:
analysis of the p-version, SIAM J. Numer. Anal, {\bf 49} (2011), pp. 264--284.



\bibitem{Ihlenburg}
F. Ihlenburg, Finite Element Analysis of Acoustic Scattering, Springer-Verlag, New York,
1998.



\bibitem{KSC2012}
R.M. Kirby, S.J. Sherwin and B. Cockburn, To CG or to HDG: A comparative study, J. Sci. Comput., {\bf 51} (2012), pp. 183--212.

\bibitem{Melenk}
J.M. Melenk, On generalized finite element methods, Ph.D.
thesis, University of Maryland, College Park, MD, 1995.

\bibitem{Melenk2}
J.M. Melenk, I. Babuska, The partition of unity finite element method: Basic theory and applications, Comput. Methods Appl. Mech. Engrg, {\bf 139} (1996), pp. 289--314.




\bibitem{MW}
P. Monk, D. Wang, A least-squares method for the Helmholtz equation, Comput. Methods Appl. Mech. Engrg, {\bf 175} (1999), pp. 121--136.

\bibitem{LB}
O. Laghrouche, P. Bettess, Short wave modelling using special finite elements, J. Comput. Acoust, {\bf 8}  (2000), pp. 189--210.

\bibitem{Shen}
J. Shen and L.L. Wang, Analysis of a spectral-Galerkin
approximation to the Helmholtz equation in exterior domains, SIAM
J. Numer. Anal., {\bf 45} (2007), pp. 1954--1978.
\bibitem{Schwab}
CH. Schwab, p- and hp-Finite Element Methods. Numerical Mathematics and Scientific Computation,
The Clarendon Press Oxford University Press, New York, 1998.

\bibitem{Wu2011}
H. Wu, Pre-asymptotic error analysis of CIP-FEM and FEM for Helmholtz equation with high wave
number. Part I: Linear version, to appear, 2012.


\end{thebibliography}
\end{document}